\newtheorem{thm}{Theorem}[section]
\newtheorem{lem}[thm]{{Lemma}}
\newtheorem{cor}[thm]{{Corollary}}
\newtheorem{rem}[thm]{{Remark}}
\newtheorem{prop}[thm]{{Proposition}}
\theoremstyle{definition}
\newtheorem{defn}[thm]{{Definition}}
\numberwithin{equation}{section}
\newcommand{\sbm}[1]{\left[\begin{smallmatrix} #1
                \end{smallmatrix}\right]}
\newcommand{\C}{{\mathbb C}}
\newcommand{\D}{{\mathbb D}}
\newcommand{\T}{{\mathbb T}}
\newcommand{\Z}{{\mathbb Z}}
\newcommand{\AP}{{\bf AIP}_{\cH(K_S)}}
\newcommand{\APS}{{\bf AIP}_{\cS(\cU,\cY)}}
\newcommand{\OAP}{{\bf OAP}_{\cH(K_S)}}
\newcommand{\mat}[2]{\ensuremath{\left[\begin{array}{#1}#2\end{array} \right]}}
\newcommand{\ov}[1]{{\overline{#1}}}
\newcommand{\inn}[2]{\ensuremath{\langle #1,#2 \rangle}}
\newcommand{\tu}[1]{\textup{#1}}
\newcommand{\Ran}{\textup{Ran\,}}
\newcommand{\wtil}{\widetilde}
\newcommand{\half}{\frac{1}{2}}
\newcommand{\Red}{{\boldsymbol{\mathcal R}}}
\newcommand{\cD}{{\mathcal D}}
\newcommand{\cE}{{\mathcal E}}
\newcommand{\cH}{{\mathcal H}}
\newcommand{\cK}{{\mathcal K}}\newcommand{\cL}{{\mathcal L}}
\newcommand{\cM}{{\mathcal M}}\newcommand{\cN}{{\mathcal N}}
\newcommand{\cO}{{\mathcal O}}
\newcommand{\cR}{{\mathcal R}}
\newcommand{\cS}{{\mathcal S}}
\newcommand{\cU}{{\mathcal U}}
\newcommand{\cW}{{\mathcal W}}\newcommand{\cX}{{\mathcal X}}
\newcommand{\cY}{{\mathcal Y}}
\newcommand{\Ga}{\Gamma}
\newcommand{\de}{\delta}\newcommand{\De}{\Delta}
\newcommand{\Si}{\Sigma}
\newcommand{\N}{{\mathbb N}}
\newcommand{\bx}{{\mathbf x}}
\newcommand{\bstar}{{[*]}}
\newcommand{\FS}{{F^S}}
\newcommand{\FSstar}{{F^{S\bstar}}}
\newcommand{\Fs}{{F^s}}
\newcommand{\fD}{{\mathfrak D}}
\newcommand{\BC}{{\mathbb C}}
\newcommand{\bU}{{\mathbf U}}
\newcommand{\BD}{{\mathbb D}}
\numberwithin{equation}{section}
\begin{document}

\title[Interpolation in de Branges-Rovnyak spaces]
{Abstract interpolation in vector-valued \\ de Branges-Rovnyak spaces}

\author{Joseph A. Ball}
\address{Joseph A. Ball, Department of Mathematics,
Virginia Tech, Blacksburg, VA 24061-0123, USA}
\email{ball@math.vt.edu}

\author{Vladimir Bolotnikov}
\address{Vladimir Bolotnikov, Department of Mathematics,
The College of William and Mary,
Williamsburg VA 23187-8795, USA}
\email{vladi@math.wm.edu}

\author{S. ter Horst}
\address{S. ter Horst, Department of Mathematics, Unit for BMI,
North-West University, Potchefstroom 2531, South Africa}
\email{Sanne.TerHorst@nwu.ac.za}

\begin{abstract}
Following ideas from the Abstract Interpolation Problem of \cite{kky} for Schur class functions,
we study a general metric constrained interpolation problem for functions from a vector-valued de
Branges-Rovnyak space $\cH(K_S)$ associated with an operator-valued Schur class function $S$.
A description of all solutions is obtained in terms of functions from an associated de Branges-Rovnyak
space satisfying only a bound on the de Branges-Rovnyak-space norm.
Attention is also paid to the case that the map which provides this description is injective.
The interpolation problem studied here contains as particular cases (1) the vector-valued version of
the interpolation problem with operator argument considered recently in \cite{bbt2} (for the
nondegenerate and scalar-valued case) and (2) a boundary
interpolation problem in $\cH(K_S)$.
In addition, we discuss connections with results on kernels
of Toeplitz operators and nearly invariant subspaces of the backward shift operator.
\end{abstract}

\subjclass[2010]{46E22, 47A57, 30E05}
\keywords{de Branges-Rovnyak space, Abstract Interpolation Problem, boundary interpolation,
operator-argument interpolation, Redheffer transformations, Toeplitz kernels}

\maketitle

%%%%%%%%%%%%%%%%%%%%%%%%%%%%%%%%%%%%%%%%%%%%%%%%%%%%%%%%%%%%%%%%%%%%%%%%%%%%%%%%%%%%%%%%%%%%%%%% Add material
%%%%%%%%%%%%%%%%%%%%%%%%%%%%%%%%%%%%%%%%%%%%%%%%%%%%%%%%%%%%%%%%%%%%%%%%%%%%%%%%%%%%%%%%%%%%%%%%

\section{Introduction}

De Branges-Rovnyak spaces play a prominent role in Hilbert space approaches to
$H^\infty$-interpolation. However, very little work exists on interpolation for
functions in de Branges-Rovnyak spaces themselves. In this paper we pursue our
studies of interpolation problems for functions in de Branges-Rovnyak spaces,
which started in \cite{bbt2}. We consider a norm constrained interpolation problem
(denoted by $\AP$ in what follows), which is sufficiently fine so as to include
on the one hand interpolation problems with operator argument (considered
for the nondegenerate and scalar-valued case in \cite{bbt2}) and, on the other hand,
boundary interpolation problems; it is only recent work \cite{bk3, bk6, bk5, fm}
which has led to a systematic understanding of boundary-point evaluation on de
Branges-Rovnyak spaces from an operator-theoretic point of view.

In order to state the interpolation problem we first introduce some definitions
and notations. As usual, for Hilbert spaces $\cU$ and $\cY$ the symbol
$\cL(\cU,\cY)$ stands for the space of bounded linear operators mapping $\cU$
into $\cY$, abbreviated to $\cL(\cU)$ in case $\cU=\cY$.
Following the standard terminology, we define the operator-valued {\em Schur class}
$\cS(\cU, \cY)$ to be the class of analytic functions $S$ on the open unit disk
$\D$ whose values $S(z)$  are contraction operators in $\cL(\cU,\cY)$.
By $H^2_\cU$ we denote the standard Hardy space of analytic
$\cU$-valued functions
on $\D$ with square-summable sequence of Taylor coefficients.
We also make use of the notation $\operatorname{Hol}_{\cU}({\mathbb
D})$ for the space of all $\cU$-valued holomorphic functions on the
unit disk ${\mathbb D}$.

Among several alternative characterizations of the Schur class there is
one in terms of positive kernels and associated reproducing kernel
Hilbert spaces:  {\em A function $S \colon {\mathbb D} \to
\cL(\cU,\cY)$ is in the Schur class $\cS(\cU,\cY)$ if and only if the
associated de Branges-Rovnyak kernel
\begin{equation}
K_S(z,\zeta)=\frac{I_\cY-S(z)S(\zeta)^*}{1-z\bar{\zeta}}
\label{1.1}
\end{equation}
is positive} (precise definitions are recalled at the end of this
Introduction). This positive kernel gives rise to a reproducing kernel Hilbert
space ${\mathcal H}(K_S)$, the  de Branges-Rovnyak space defined by $S$
(see \cite{dbr2}).
On the other hand, the kernel \eqref{1.1} being positive is equivalent to
the operator $M_S: f\to Sf$ of  multiplication by $S$ being a contraction
in $\cL(H^2_\cU,H^2_\cY)$; then the  general complementation theory applied
to the contractive operator $M_S \colon H_\cU^2\to H_\cY^2$ provides
the characterization
of $\cH(K_S)$ as the operator range
$\cH(K_S)={\rm Ran}(I-M_SM^*_S)^{\frac{1}{2}}\subset H^2_\cY$  with the
lifted norm
$$
\|(I-M_SM^*_S)^{\frac{1}{2}}f\|_{\cH(K_S)}=\|(I-{\bf p})f \|_{H^2_\cY}
$$
where ${\bf p}$ here is the orthogonal projection onto ${\rm
Ker}(I-M_SM^*_S)^{\frac{1}{2}}$.
Upon setting $f=(I-M_SM^*_S)^{\frac{1}{2}}h$ in the last formula we get
\begin{equation}
\| (I - M_{S}M_{S}^{*})h\|_{\cH(K_S)}=\langle (I - M_S M_S^{*})h, \,
h\rangle_{H^2_\cY}.
\label{1.2}
\end{equation}

The data set of the problem $\AP$ is a tuple
\begin{equation}
{\mathcal D}=\{S, T, E, N, {\bf x}\}
\label{1.2a}
\end{equation}
consisting of a Schur-class function $S\in\cS(\cU,\cY)$, Hilbert space
operators $T\in\cL(\cX)$, $E\in\cL(\cX,\cY)$, $N\in\cL(\cX,\cU)$, and
a vector ${\bf x}\in\cX$. With this data set we associate the
observability operators
\begin{equation}
\cO_{E,T}: \; x\mapsto E(I-zT)^{-1}x\quad\mbox{and}\quad \cO_{N,T}:
\; x\mapsto N(I-zT)^{-1}x,
\label{1.4a}
\end{equation}
which we assume  map $\cX$ into $\operatorname{Hol}_{\cY}({\mathbb D})$ and
$\operatorname{Hol}_{\cU}({\mathbb D})$.  We also associate with the
data set ${\mathcal D}$ the $\cL(\cX,\cY)$-valued function
\begin{equation}
F^S(z)=(E-S(z)N)(I-zT)^{-1}
\label{1.17}
\end{equation}
along with the multiplication operator $M_{F^S} \colon x\to F^Sx$, mapping $\cX$ into
$\operatorname{Hol}_{\cY}({\mathbb D})$.  Using the notation \eqref{1.4a}, we can
write $M_{F^{S}}$ as
\begin{equation}
M_{F^S}=\cO_{E,T}-M_S\cO_{N,T}:\cX\to\operatorname{Hol}_{\cY}({\mathbb D}).
\label{1.18a}
\end{equation}
Observe that, for an operator $A \colon \cX\to \cH(K_S)\subset H^2_\cY$, the adjoint
operator can be taken in the metric of $H^2_\cY$ as well as in the metric
of $\cH(K_S)$ which are not the same unless $S$ is inner (i.e., the multiplication
operator $M_S: \; H^2_\cU\to H^2_\cY$ is an isometry). To avoid confusion,
in what follows we use the notation $A^*$ for the adjoint of  $A$ in the metric
of $H^2_\cY$ and $A^{[*]} $ for the adjoint of $A$ in the metric
of $\cH(K_S)$.

\begin{defn}\label{D:1.1}
We say that the data set \eqref{1.2a} is {\em $\AP$-admissible} if:
\begin{enumerate}
\item The operators $\cO_{E,T}$ and $\cO_{N,T}$ map $\cX$ into
${\rm Hol}_\cY(\D)$ and ${\rm Hol}_\cU(\D)$, respectively (in other words,
$(E,T)$ and $(N,T)$ are {\em analytic output pairs}).
\item The operator $M_{F^S}$ maps $\cX$ into $\cH(K_S)$.
\item The operator $P:=M_{F^S}^{[*]}M_{F^S}$ satisfies the Stein equation
\begin{equation}
P-T^*PT=E^*E-N^*N.
\label{1.11}
\end{equation}
\end{enumerate}
\end{defn}

We are now ready to formulate the problem $\AP$:

\medskip

{\em Given an $\AP$-admissible
data set \eqref{1.2a}, find all $f\in\cH(K_S)$ such that
\begin{equation}
M_{F^S}^{[*]}f={\bf x}\quad\mbox{and}\quad\|f\|_{\cH(K_S)}\le 1.
\label{1.3a}
\end{equation}}

The $\AP$-problem as formulated here does not appear to be an
interpolation problem, but in Section \ref{S:OAP} we show that indeed
the operator-argument Nevanlinna-Pick interpolation problem can be seen as a
particular instance of the $\AP$-problem.

This operator-argument problem was considered in \cite{bbt2} for scalar-valued functions and for the nondegenerate case where
the solution $P$ of the Stein equation \eqref{1.11} is positive definite (i.e., invertible).
The eventual parametrization for the set of all solutions, which we obtain in Theorem
\ref{T:AIPsol} below, is connected with previously appearing representations for almost
invariant subspaces and Toeplitz kernels in terms of an isometric multiplier between
two de Branges-Rovnyak spaces.
As another application of the $\AP$-problem, we obtain an alternative
characterization of Toeplitz kernels (in Corollary
\ref{C:Toeplitzkernel} below) in terms of an explicitly computable isometric multiplier on an appropriate
de Branges-Rovnyak space; this is a refinement of the characterization due to Dyakonov \cite{dy}.

At one level the interpolation problem $\AP$ is straightforward since de
Branges-Rovnyak spaces are Hilbert spaces and consequently the set of all
norm-constrained solutions splits as the orthogonal direct sum of the unique
minimal-norm solution and the set of all functions satisfying the homogeneous
interpolation condition and the complementary norm constraint.
By viewing \eqref{1.3a} as a special case of a basic linear operator
equation discussed in Section \ref{S:LOE}, we get some general results
on the $\AP$-problem in Section \ref{S:FirstResults}.  These results
make no use of condition (3) (i.e., the Stein equation \eqref{1.11})
in the definition of $\AP$-admissibility, and can be easily extended
to a more general framework of contractive multipliers between any two
reproducing kernel Hilbert spaces (not necessarily of de
Branges-Rovnyak type).
By using the full strength of $\AP$-admissibility, in Section \ref{S:Param} we obtain
a more explicit formula (see Theorem \ref{T:AIPsol} below) for the parametrization of
the solution set by using the connection with an associated Schur-class Abstract Interpolation
Problem and its known Redheffer transform solution as worked out in
\cite{kky}.
The latter problem and its solution through the associated Redheffer transform is recalled
in Section \ref{S:Redheffer}. This section also includes an analysis of the conditions
under which the Redheffer transform is injective, a property which does not happen in general.
The paper concludes with three sections that discuss the various
applications of the $\AP$-problem mentioned above.

The notation is mostly standard. We just mention that an operator $X\in\cL(\cY)$, for
some Hilbert space $\cY$, is called {\em positive semidefinite} in case $\inn{Xy}{y}\geq 0$
for all $y\in\cY$ and {\em positive definite} if $X$ is positive semidefinite and invertible in
$\cL(\cX)$.   Also, in general, given a function $K$ defined on a Cartesian product set
$\Omega \times \Omega$ with values in $\cL(\cY)$, we say that $K$ is
a {\em positive kernel} if any one of the following equivalent
conditions hold:
\begin{enumerate}
    \item {\em $K$ is a  positive kernel in the sense of Aronszajn}:  given any finite
    collection of points $\omega_{1}, \dots, \omega_{N}$ in $\Omega$
    and vectors $y_{1}, \dots, y_{N}$ in the Hilbert coefficient
    space $\cY$, it holds that
 $$
 \sum_{i,j=1}^{N} \langle (K(\omega_{i}, \omega_{j}) y_{j}, y_{i}
 \rangle_{\cY} \ge 0.
 $$

 \item {\em  $K$ is the reproducing kernel for a reproducing kernel
 Hilbert space $\cH$}:  there is a Hilbert space $\cH(K)$ whose
 elements are $\cY$-valued functions on $\Omega$ so that (i) for each
 $\omega \in \Omega$ and $y \in \cY$ the $\cY$-valued function
 $k_{\omega}y$ given by $k_{\omega}y(\omega') = K(\omega', \omega) y$
 is an element of $\cH(K)$, and (ii)  the functions $k_{\omega}y$
 have the reproducing property for $\cH(K)$:
 $$
  \langle f, \, k_{\omega}y \rangle_{\cH(K)} = \langle f(\omega), y
  \rangle_{\cY}
 $$
 for all $f \in \cH(K)$.

 \item {\em $K$ has a Kolmogorov decomposition}:  there is an auxiliary
 Hilbert space ${\mathcal K}$ and a function $H \colon \Omega \to
 \cL(\cK, \cY)$ so that $K$ can be expressed as
 $$
  K(\omega', \omega) = H(\omega') H(\omega)^{*}.
$$

\end{enumerate}
These equivalences are well-known straightforward extensions of the
ideas of Aronszajn \cite{Aron} to the case of operator-valued kernels
in place of scalar-valued kernels.

Next we mention that  on occasion we view a vector ${\mathbf x}$ in a Hilbert space $\cX$ as an operator from the scalars ${\mathbb C}$ into $\cX$:  ${\mathbf x}$ maps the scalar $c \in {\mathbb C}$
to the vector $c {\mathbf x} \in {\mathcal X}$.
Then ${\mathbf x}^*$ denotes the adjoint operator mapping
${\mathcal X}$ back to ${\mathbb C}$:
${\mathbf x}^*({\mathbf y}) = \langle {\mathbf  y}, {\mathbf  x} \rangle \in {\mathbb C}$.
We will use the notation ${\mathbf x}^*$ for this operator rather than the more cumbersome $\langle \cdot, {\mathbf x} \rangle$.

Finally we note that a crucial tool for many of the results of this paper is the manipulation of $2 \times 2$ block matrices centering around the so-called Schur complement.  Given any $2 \times 2$ block matrix $M =  \left[ \begin{smallmatrix} A & B \\ C & D \end{smallmatrix} \right]$ with $A$ invertible, we define the {\em Schur complement} of $D$ (with respect to $M$) to be the matrix
$$
S_M(D):= D -  C A^{-1}B.
$$
In case $D$ is invertible, we define the {\em Schur complement}  of $A$ (with respect to $M$) to be the matrix
$$
  S_M(A): = A  - B D^{-1} C.
$$
Our main application is to the case where $M=M^*$ is self-adjoint (so $A=A^*$, $D=D^*$ and $C = B^*$).
Assuming $A$ is invertible, we may factor $A$ as $A = |A|^{1/2 } J |A|^{1/2}$ where $J: = \text{sign}(A)$ and the factorization
$$
\begin{bmatrix} A & B \\ B^* & D \end{bmatrix}
= \begin{bmatrix} |A|^{1/2} & 0 \\ B^* |A|^{-1/2}J & I \end{bmatrix}
\begin{bmatrix} J & 0 \\ 0 & D - B^* A^{-1} B \end{bmatrix}
\begin{bmatrix} |A|^{1/2} & J |A|^{-1/2} B \\ 0 & I \end{bmatrix}
$$
shows that {\em $M \ge 0$ (i.e., $M$ is positive-semidefinite) if and only if $A \ge 0$ (so $J = I$) and the Schur complement $D - B^* A^{-1} B$ of $D$ is positive semidefinite. }  Similarly, in case $D$ is invertible,  we see that {\em  $M \ge 0$ if and only if $D \ge 0$ and the Schur complement of $A$, namely, $A - B D^{-1} B^*$, is positive semidefinite. In fact, these results go through without the invertibility assumption on
$A$ or $D$, using Moore-Penrose inverses instead.}

%%%%%%%%%%%%%%%%%%%%%%%%%%%%%%%%%%%%%%%%%%%%%%%%%%%%%%%%%%%%%%%%%%%%%%%%%%%%%%%%%%%%%%%
\section{Linear operator equations}\label{S:LOE}

The problem $\AP$ is a particular case of the following well-known
norm constrained operator problem: {\em Given
$A\in\cL(\cH_2,\cH_3)$ and $B\in\cL(\cH_1,\cH_3)$, with
$\cH_1$, $\cH_2$ and $\cH_3$ given Hilbert spaces, describe
the operators $X\in\cL(\cH_1,\cH_2)$ that satisfy}
\begin{equation}
\label{2.1}
AX = B \quad\text{\em and}\quad \|X\| \le 1.
\end{equation}
The solvability criterion is known as
the Douglas factorization lemma \cite{Douglas}.

\begin{lem}\label{L:Douglas}
There exists an $X\in\cL(\cH_1,\cH_2)$ satisfying \eqref{2.1} if and only
if $AA^*\ge BB^*$. In this case, there exists a unique
$X\in\cL(\cH_1,\cH_2)$
satisfying \eqref{2.1} and the additional constraints
${\rm Ran}X\subset{\rm Ran}A^*$
and ${\rm Ker}X={\rm Ker}B$.
\end{lem}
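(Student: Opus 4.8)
The plan is to prove both implications of the Douglas factorization lemma. For the ``only if'' direction, suppose $X\in\cL(\cH_1,\cH_2)$ satisfies $AX=B$ and $\|X\|\le 1$. Then $BB^* = AXX^*A^* \le AA^*$, since $XX^* \le \|X\|^2 I \le I$ implies $A(I-XX^*)A^* \ge 0$. This is immediate and is not the hard part.

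For the ``if'' direction, assume $AA^* \ge BB^*$. The main task is to construct a contraction $X$ with $AX=B$. I would define $X$ first on $\Ran A^*$ by the rule $X(A^*y) := B^*y$ for $y\in\cH_3$, and check it is well-defined and bounded by $1$ there. Well-definedness and the norm bound are handled simultaneously by the estimate
\[
\|B^*y\|^2 = \langle BB^* y, y\rangle \le \langle AA^* y, y\rangle = \|A^*y\|^2,
\]
which shows that $A^*y = 0 \Rightarrow B^*y = 0$ (so the definition does not depend on the choice of $y$ with a given $A^*y$) and that $\|X(A^*y)\| \le \|A^*y\|$. Extend $X$ by continuity to $\ov{\Ran A^*}$, and set $X = 0$ on $(\ov{\Ran A^*})^\perp = \kr A$. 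The resulting $X$ is a contraction. To verify $AX = B$: for any $y$, $AX(A^*y) = A B^* y$, and I must check this equals $BB^*y \cdot (\text{something})$—more precisely, I should verify $\langle AXu, v\rangle = \langle Bu, v\rangle$ for $u\in\cH_1$, $v\in\cH_3$; writing $u = A^*y + u_0$ with $u_0\in\kr A$, the claim reduces to $\langle A B^* y, v\rangle = \langle B A^* y, v\rangle$ plus the vanishing of the $u_0$ terms (which hold since $Au_0 = 0$ and $B^* v' \in \ov{\Ran A^*}$ forces $\langle u_0, B^*v'\rangle=0$; here one needs $\kr A \subset \kr B$, which follows from the displayed inequality).

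For the uniqueness clause, observe that by construction this particular $X$ satisfies $\Ran X \subset \ov{\Ran A^*}$; a small additional argument refines this to $\Ran X \subset \Ran A^*$ and $\kr X = \kr B$ (one uses that $X$ as defined sends $B^*y \neq 0$ inputs... rather: one adjusts the construction or invokes a polar-decomposition-type argument to get $\Ran X$ exactly inside $\Ran A^*$ rather than its closure). If $X_1, X_2$ both satisfy \eqref{2.1} together with $\Ran X_i \subset \Ran A^*$ and $\kr X_i = \kr B$, then $A(X_1 - X_2) = 0$, so $\Ran(X_1-X_2) \subset \kr A = (\Ran A^*)^\perp$; but also $\Ran(X_1-X_2)\subset \Ran A^*$, hence $X_1 - X_2 = 0$. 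I expect the main obstacle to be the careful bookkeeping in pinning down $\Ran X = \Ran A^*$ (the non-closed range) rather than merely its closure, which is where the precise form of the extension matters; alternatively one cites Douglas \cite{Douglas} directly for this refinement. The positivity/Schur-complement machinery recalled in the Introduction is not needed for this lemma itself, though it foreshadows the block-matrix arguments used later.
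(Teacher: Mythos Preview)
The paper does not prove this lemma; it simply attributes it to Douglas \cite{Douglas} and uses it as a black box, so there is no paper proof to compare against.

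Your approach is the standard one, but there is a direction error in the construction that causes the verification to break down. You define $X$ on $\Ran A^*$ by $X(A^*y):=B^*y$. Since $A^*y\in\cH_2$ and $B^*y\in\cH_1$, this produces an operator from (a subspace of) $\cH_2$ into $\cH_1$, whereas you need $X\in\cL(\cH_1,\cH_2)$. What you have actually built is $X^*$, not $X$. The later steps then fail to typecheck: the composition ``$AX(A^*y)$'' is undefined (domain mismatch), the claimed inclusion ``$\kr A\subset\kr B$'' compares subspaces of $\cH_2$ and $\cH_1$, and $B^*v'$ lies in $\cH_1$, not in $\ov{\Ran A^*}\subset\cH_2$. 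The fix is easy: call the operator you construct $C$, so that $C(A^*y)=B^*y$, extend by continuity and by zero on $\kr A$, and then set $X:=C^*$. The identity $AX=B$ follows by taking adjoints in $CA^*=B^*$, and the range/kernel constraints for $X$ are read off from those of $C$.

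Your caution about $\Ran X\subset\Ran A^*$ versus $\Ran X\subset\ov{\Ran A^*}$ is well placed: Douglas's theorem in general only yields the latter. Indeed, take $\cH_1=\cH_2=\cH_3$ and $A=B$ positive, injective, with non-closed range; then $AA^*=BB^*$, the unique contractive solution of $AX=B$ is $X=I$, and $\Ran X=\cH_2\not\subset\Ran A^*$. So the lemma as stated should be read with the closure, and your uniqueness argument is then correct: $A(X_1-X_2)=0$ forces $\Ran(X_1-X_2)\subset\kr A\cap\ov{\Ran A^*}=\{0\}$.
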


In case $A A^{*}\geq BB^{*}$, Lemma \ref{L:Douglas} guarantees the
existence of (unique) contractions $X_1\in\cL(\cH_1,\overline{{\rm
Ran}}A)$ and
$X_2\in\cL(\cH_2,\overline{\rm Ran}A)$ so that
\begin{equation}\label{2.2}
(AA^*)^{\frac{1}{2}}X_1=B,\quad
(AA^*)^{\frac{1}{2}}X_2=A, \quad {\rm Ker} X_1={\rm Ker} B,\quad
{\rm Ker} X_2={\rm Ker} A.
\end{equation}
By construction, $X_2$ is a coisometry. The next lemma gives a
description of the operators $X\in\cL(\cH_1,\cH_2)$ satisfying \eqref{2.1}
in terms of the operators $X_1$ and $X_2$.

\begin{lem}
Assume $A A^{*}\geq BB^{*}$ and let $X\in\cL(\cH_1,\cH_2)$.
Then the following statements are equivalent:
\begin{enumerate}
\item $X$ satisfies conditions \eqref{2.1}.

\item The operator
\begin{equation}\label{2.3}
\begin{bmatrix}  I_{\cH_{1}} & B^{*} & X^{*} \\
B & A A^{*} & A \\ X & A^{*} & I_{\cH_{2}} \end{bmatrix}: \;
\begin{bmatrix}\cH_1\\\cH_3\\\cH_2\end{bmatrix}\to
\begin{bmatrix}\cH_1\\\cH_3\\\cH_2\end{bmatrix}
\end{equation}
is positive semidefinite.

\item $X$ is of the form
\begin{equation}\label{2.4}
X=X_2^*X_1+(I-X_2^*X_2)^{\frac{1}{2}}K (I-X_1^*X_1)^{\frac{1}{2}}
\end{equation}
where $X_1$ and $X_2$ are defined as in \eqref{2.2} and
where the parameter $K$ is an arbitrary contraction from
$\overline{\rm Ran}(I-X_1^*X_1)$ into $\overline{\rm Ran}(I-X_2^*X_2)$.
\end{enumerate}
Moreover, if $X$ satisfies \eqref{2.1}, then $X$ is unique if and only if
$X_1$ is isometric on $\cH_1$ or $X_2$ is isometric on $\cH_2$.
\label{L:2.1}
\end{lem}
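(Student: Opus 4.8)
The plan is to prove the chain of implications (1)$\Rightarrow$(2)$\Rightarrow$(3)$\Rightarrow$(1), all under the standing assumption $AA^*\ge BB^*$, and then to read off the uniqueness statement from the parametrization in (3). The whole argument is an exercise in Schur-complement bookkeeping for the self-adjoint $2\times 2$ (really $3\times 3$) block matrix in \eqref{2.3}, combined with the factorizations \eqref{2.2} that encode the Douglas lemma.

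For (1)$\Rightarrow$(2): assuming $AX=B$ and $\|X\|\le 1$, I would exhibit the $3\times 3$ block matrix in \eqref{2.3} as a Gram matrix, hence manifestly positive semidefinite. Concretely, write $A=(AA^*)^{1/2}X_2$ and $B=(AA^*)^{1/2}X_1$ as in \eqref{2.2}; then $AX=B$ forces $X_2 X=X_1$ on $\overline{\Ran}\,A$ after composing with the coisometry, but more directly one checks that
\[
\begin{bmatrix} I & B^* & X^* \\ B & AA^* & A \\ X & A^* & I \end{bmatrix}
=
\begin{bmatrix} X_1^* \\ (AA^*)^{1/2} \\ X_2^* \end{bmatrix}
\begin{bmatrix} X_1 & (AA^*)^{1/2} & X_2 \end{bmatrix}
+
\begin{bmatrix} I-X_1^*X_1 & 0 & X^*-X_1^*X_2 \\ 0 & 0 & 0 \\ X-X_2^*X_1 & 0 & I-X_2^*X_2 \end{bmatrix},
\]
so positivity of \eqref{2.3} reduces to positivity of the compressed $2\times 2$ matrix $\left[\begin{smallmatrix} I-X_1^*X_1 & X^*-X_1^*X_2 \\ X-X_2^*X_1 & I-X_2^*X_2\end{smallmatrix}\right]$. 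This latter matrix is $\ge 0$ precisely because $\left[\begin{smallmatrix} I & X^* \\ X & I\end{smallmatrix}\right]\ge 0$ (equivalent to $\|X\|\le 1$) and one subtracts the positive rank-structured piece coming from $\left[\begin{smallmatrix} X_1^* \\ X_2^*\end{smallmatrix}\right]\left[\begin{smallmatrix} X_1 & X_2\end{smallmatrix}\right]$; here one uses that $X_2$ is a coisometry and that $X_2X=X_1$, the latter following from $AX=B$ together with $\kr(AA^*)^{1/2}=\kr A^*\perp\Ran A\supseteq\Ran(AA^*)^{1/2}$. I expect this identity-juggling to be the step requiring the most care, since one must track the correct ranges and the coisometric property of $X_2$.

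For (2)$\Rightarrow$(3) and (3)$\Rightarrow$(1): from (2), applying the Schur-complement criterion recalled at the end of the Introduction to the block matrix \eqref{2.3} with the invertible corner $I_{\cH_3}$ replaced by its Moore--Penrose role (or, more cleanly, first compressing away the middle $AA^*$ block as above) yields that $\left[\begin{smallmatrix} I-X_1^*X_1 & X^*-X_1^*X_2 \\ X-X_2^*X_1 & I-X_2^*X_2\end{smallmatrix}\right]\ge 0$; another Schur-complement step then gives exactly that $X-X_2^*X_1=(I-X_2^*X_2)^{1/2}K(I-X_1^*X_1)^{1/2}$ for a unique contraction $K$ from $\overline{\Ran}(I-X_1^*X_1)$ to $\overline{\Ran}(I-X_2^*X_2)$, which is \eqref{2.4}. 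Conversely, given \eqref{2.4}, compute $AX=(AA^*)^{1/2}X_2X=(AA^*)^{1/2}\big(X_2X_2^*X_1+X_2(I-X_2^*X_2)^{1/2}K(\cdots)\big)$; since $X_2$ is a coisometry, $X_2X_2^*=I$ and $X_2(I-X_2^*X_2)^{1/2}=0$, so $AX=(AA^*)^{1/2}X_1=B$, and the norm bound $\|X\|\le 1$ follows by reversing the Schur-complement estimate. Finally, for uniqueness: \eqref{2.4} shows the solution set is a single point iff the parameter $K$ is forced, i.e. iff $\overline{\Ran}(I-X_1^*X_1)=\{0\}$ or $\overline{\Ran}(I-X_2^*X_2)=\{0\}$, which is exactly the condition that $X_1$ is isometric on $\cH_1$ or $X_2$ is isometric on $\cH_2$.
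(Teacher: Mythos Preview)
Your proposal is essentially correct and rests on the same Schur-complement machinery as the paper, but the paper organizes the argument more efficiently in one place. For $(1)\Leftrightarrow(2)$ the paper does \emph{not} go through the $2\times 2$ defect matrix
$\left[\begin{smallmatrix}I-X_1^*X_1 & X^*-X_1^*X_2\\ X-X_2^*X_1 & I-X_2^*X_2\end{smallmatrix}\right]$
at all; instead it takes the Schur complement of the invertible corner $I_{\cH_2}$ in \eqref{2.3}, obtaining
$\left[\begin{smallmatrix}I-X^*X & B^*-X^*A^*\\ B-AX & 0\end{smallmatrix}\right]\ge 0$,
and the zero $(2,2)$-entry immediately forces $B-AX=0$ and $I-X^*X\ge 0$. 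This sidesteps exactly the step you flag as delicate. Your justification there (``one subtracts the positive rank-structured piece'') is not an argument as written---subtracting a positive operator from a positive operator need not yield something positive. The step \emph{can} be completed: using $X_2X=X_1$ and the fact that $I-X_2^*X_2$ is a projection, the defect matrix rewrites as
$\left[\begin{smallmatrix}I-X^*X & 0\\0&0\end{smallmatrix}\right]
+\left[\begin{smallmatrix}X^*\\I\end{smallmatrix}\right](I-X_2^*X_2)\left[\begin{smallmatrix}X & I\end{smallmatrix}\right]\ge 0$;
but the paper's choice of Schur complement avoids this computation entirely.

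For $(2)\Leftrightarrow(3)$ you and the paper do the same thing: take the (generalized) Schur complement of the middle block $AA^*$ to reduce to positivity of the $2\times 2$ defect matrix, then parametrize the solutions. The paper invokes Theorem~XVI.1.1 of \cite{FF90} for the parametrization step, whereas you phrase it as a second Schur complement; these are equivalent. Your direct verification of $(3)\Rightarrow(1)$ via $X_2X_2^*=I$ and $X_2(I-X_2^*X_2)^{1/2}=0$ is correct, and your uniqueness argument matches the paper's.
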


\begin{proof}
Note that positivity of the block-matrix in \eqref{2.3} is equivalent to
positivity of the Schur complement of $I_{\cH_2}$ in \eqref{2.3}, namely
\begin{equation}\label{2.5}
\begin{bmatrix} I & B^{*} \\ B & A A^{*} \end{bmatrix}-
\begin{bmatrix} X^{*} \\ A \end{bmatrix}
\begin{bmatrix} X & A^* \end{bmatrix} = \begin{bmatrix}  I -
X^{*}X & B^{*} - X^{*} A^{*} \\ B - A X & 0 \end{bmatrix}\ge 0.
\end{equation}
Because of the zero in the $(2,2)$-entry of the left hand side of
\eqref{2.5},
we find that the inequality \eqref{2.5} holds precisely when
$$
B-AX=0\quad\mbox{and}\quad I-X^*X\geq 0,
$$
which is equivalent to \eqref{2.1}. On the other hand, condition
\eqref{2.3} is equivalent, by taking the Schur complement of $AA^*$
in \eqref{2.3} and making use of \eqref{2.2}, to
$$
\begin{bmatrix} I & X^* \\ X& I\end{bmatrix}-
\begin{bmatrix}X_1^* \\ X_2^*\end{bmatrix}\begin{bmatrix}X_1&
X_2\end{bmatrix}=\begin{bmatrix} I-X_1^*X_1 & X^*-X_1^*X_2 \\ X-X_2^*X_1 &
I-X_2^*X_2\end{bmatrix}\ge 0.
$$
By Theorem XVI.1.1 from \cite{FF90}, the latter inequality is equivalent
to the representation \eqref{2.4} for $X$ with $K$ some contraction in
$\cL(\overline{\rm Ran}(I-X_1^*X_1),\overline{\rm Ran}(I-X_2^*X_2))$.
Moreover, $X$ and $K$ in \eqref{2.4} determine each other uniquely.
The last statement in the lemma now follows from representation
\eqref{2.4}.
\end{proof}

Note that since $X_2$ is a coisometry, it follows that
$(I-X_2^*X_2)^{\frac{1}{2}}$ is the
orthogonal projection onto $\cH_1\ominus{\rm Ker} A=\cH_1\ominus{\rm Ker}
X_1$. This implies that for each $K$ in
\eqref{2.4} and  each $h\in\cH_1$, we have
$$
\|Xh\|^2=\|X_2^*X_1 h\|^2+\|(I-X_2^*X_2)^{\frac{1}{2}}K
(I-X_1^*X_1)^{\frac{1}{2}}h\|^2,
$$
so that $X_2^*X_1$ is the minimal norm solution to the problem
\eqref{2.1}.

%%%%%%%%%%%%%%%%%%%%%%%%%%%%%%%%%%%%%%%%%%%%%%%%%%%%%%%%%%%%%%%%%%%%%%%%%%%%%%%%%%%%%%%
\section{The $\AP$-problem as a linear operator equation}\label{S:FirstResults}

In this section we consider data sets ${\mathcal D}=\{S, T, E, N, {\bf x}\}$
satisfying conditions (1) and (2) in the definition of an
$\AP$-admissible data set but not necessarily condition (3); condition
(3) of an $\AP$-admissible data set (i.e., the Stein equation
\eqref{1.11}) comes to the fore for the derivation of the more
explicit results to be presented in Section  \ref{S:Param}.  We still speak
of the $\AP$-problem for this looser notion of admissible data
set.   Define $F^S$ as in \eqref{1.17}. If we apply Lemma \ref{L:Douglas} to the case where
\begin{equation}
A=M_{F^S}^{[*]}:\cH(K_S)\to\cX,\quad B ={\bf x}\in\cX \cong\cL(\C,\cX),
\label{2.7}
\end{equation}
then we see that solutions $X \colon \C\to\cH(K_{S})$ to problem \eqref{2.1} necessarily
have the form of a multiplication operator $M_f$ for some function
$f\in \cH(K_S)$.  This observation leads to the following solvability criterion.

\begin{thm}\label{T:3.6a}
The problem $\AP$ has a solution if and only if
\begin{equation}
P\ge {\bf x}{\bf x}^*,\quad\mbox{where}\quad
P:=M_{F^S}^{[*]}M_{F^S}.
\label{2.6}
\end{equation}
\end{thm}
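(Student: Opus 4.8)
The plan is to view problem $\AP$ as the linear operator equation \eqref{2.1} with the specific choice of $A$ and $B$ made in \eqref{2.7}, and then simply apply the Douglas factorization lemma (Lemma \ref{L:Douglas}). First I would observe, as already noted in the text preceding the theorem, that any solution $X \colon \C \to \cH(K_S)$ of \eqref{2.1} with $A = M_{F^S}^{[*]}$ and $B = {\bf x}$ is automatically of the form $X = M_f$ (multiplication by a scalar, i.e. $X(c) = cf$) for a unique $f \in \cH(K_S)$ with $\|f\|_{\cH(K_S)} = \|X\| \le 1$; conversely every such $f$ gives a solution $X = M_f$. Under this identification, the equation $AX = B$ reads $M_{F^S}^{[*]}(cf) = c\,{\bf x}$ for all $c \in \C$, i.e. $M_{F^S}^{[*]}f = {\bf x}$, which is exactly the interpolation condition in \eqref{1.3a}, and the norm bound $\|X\| \le 1$ is exactly $\|f\|_{\cH(K_S)} \le 1$. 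Hence $\AP$ is solvable if and only if \eqref{2.1} is solvable with these choices.

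Next I would apply Lemma \ref{L:Douglas} directly: \eqref{2.1} has a solution $X$ if and only if $AA^* \ge BB^*$. With $A = M_{F^S}^{[*]} \colon \cH(K_S) \to \cX$, the $H^2$-versus-$\cH(K_S)$ adjoint bookkeeping matters here; since $A$ is already written as an adjoint in the $\cH(K_S)$-metric, its adjoint (in the appropriate metrics) is $A^* = (M_{F^S}^{[*]})^{[*]} = M_{F^S}$, so $AA^* = M_{F^S}^{[*]} M_{F^S} = P$. Meanwhile $B = {\bf x}$ viewed as an operator $\C \to \cX$ has $BB^* = {\bf x}{\bf x}^*$, using the convention fixed in the Introduction whereby ${\bf x}^*({\bf y}) = \inn{\bf y}{\bf x}$. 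Therefore the Douglas condition $AA^* \ge BB^*$ becomes precisely $P \ge {\bf x}{\bf x}^*$, which is \eqref{2.6}, and the theorem follows.

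The only genuinely delicate point — and the one I would state carefully — is the adjoint conventions: one must be sure that when Lemma \ref{L:Douglas} is invoked with $A = M_{F^S}^{[*]}$ acting from the Hilbert space $\cH(K_S)$ (with its own inner product) into $\cX$, the "$A^*$" in the statement $AA^* \ge BB^*$ is the adjoint computed with respect to that $\cH(K_S)$-inner product on the domain side, so that it equals $M_{F^S}$ and not the $H^2_\cY$-adjoint $M_{F^S}^*$. Since condition (2) of admissibility guarantees $M_{F^S}$ maps $\cX$ into $\cH(K_S)$, the operator $M_{F^S}^{[*]} \colon \cH(K_S) \to \cX$ is well-defined and bounded, and the pair $(A, A^*) = (M_{F^S}^{[*]}, M_{F^S})$ is a genuine adjoint pair between the Hilbert spaces $\cH(K_S)$ and $\cX$; with this understood there is no real obstacle, and the argument is a one-line reduction to Lemma \ref{L:Douglas}. (Note that condition (3), the Stein equation \eqref{1.11}, is not used, consistent with the remark in the Introduction that the results of this section do not require it.)
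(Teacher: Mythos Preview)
Your proposal is correct and follows exactly the approach the paper takes: the theorem is stated immediately after the observation that the $\AP$-problem is the special case \eqref{2.7} of the linear operator equation \eqref{2.1}, and the solvability criterion is then just Lemma \ref{L:Douglas} with $AA^{*} = M_{F^{S}}^{[*]}M_{F^{S}} = P$ and $BB^{*} = {\bf x}{\bf x}^{*}$. Your careful remarks on the $\cH(K_{S})$-adjoint convention and on the irrelevance of condition (3) are accurate and consistent with the paper's own comments.
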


\begin{rem}
{\rm Observe that for the unconstrained version of the problem $\AP$, the
existence criterion follows immediately from the definition \eqref{2.6} of
$P$:} there is a function $f\in\cH(K_S)$ such that $M^{[*]}_{F^S}f={\bf
x}$ if and only if ${\bf x}\in{\rm Ran}P^{\frac{1}{2}}$.
\label{R:2.30}
\end{rem}

The next theorem characterizes
solutions to the problem $\AP$ in terms of a positive kernel.
 We emphasize that
characterizations of this type
go back to the Potapov's method of fundamental matrix inequalities
\cite{kopot}.  Here the notation ${\mathbf x}^*$ associated with a vector ${\mathbf x} \in {\mathcal X}$ follows the conventions explained at the end of the Introduction.

\begin{thm}
A function $f: \, \D\to \cY$ is a solution of the problem $\AP$ with data set \eqref{1.2a}
if and only if the kernel
\begin{equation}
{\bf K}(z,\zeta)=\begin{bmatrix} 1 & {\bf x}^* & f(\zeta)^*\\
{\bf x} & P & F^S(\zeta)^{*} \\ f(z) & F^S(z) & K_S(z,\zeta)
\end{bmatrix}\qquad (z,\zeta\in\D),
\label{2.8}
\end{equation}
is positive on $\D\times\D$. Here $P$, $F^S$ and $K_S$ are given by
\eqref{2.6}, \eqref{1.17} and \eqref{1.1}, respectively.
\label{T:2.1}
\end{thm}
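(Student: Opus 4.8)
The plan is to reduce the positivity of the $3 \times 3$ block kernel $\mathbf{K}(z,\zeta)$ in \eqref{2.8} to the already-established equivalence in Theorem \ref{T:2.1} between solvability of the $\AP$-problem and existence of a contractive multiplier $M_f$ solving \eqref{2.1}, via the characterization in Lemma \ref{L:2.1}(2). More precisely, I would first observe that with $A = M_{F^S}^{[*]}$ and $B = \mathbf{x}$ as in \eqref{2.7}, the hypothesis $A A^* \ge B B^*$ is exactly the solvability criterion $P \ge \mathbf{x}\mathbf{x}^*$ from Theorem \ref{T:3.6a}; a function $f \in \cH(K_S)$ solves $\AP$ precisely when the associated multiplication operator $X = M_f \colon \C \to \cH(K_S)$ satisfies \eqref{2.1}. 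By Lemma \ref{L:2.1}, this in turn holds if and only if the $3 \times 3$ operator block matrix \eqref{2.3} — with $\cH_1 = \C$, $\cH_2 = \cH(K_S)$, $\cH_3 = \cX$, and $X = M_f$ — is positive semidefinite as an operator on $\C \oplus \cX \oplus \cH(K_S)$.

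The second and main step is to translate that operator positivity into the pointwise kernel positivity asserted. The key is the standard correspondence between an operator $Y$ on a reproducing kernel Hilbert space $\cH(K_S)$ and its ``kernel symbol'': positivity of a block matrix with some entries acting on $\cH(K_S)$ is equivalent to positivity of the kernel obtained by compressing those entries against the reproducing kernel functions $k_\zeta^S y = K_S(\cdot,\zeta)y$. Concretely, $M_f^* = f(\cdot)^*$ (as an operator $\cH(K_S) \to \C$) is encoded by the function $z \mapsto f(z)$ since $\langle g, M_f c\rangle_{\cH(K_S)} = \bar c\, \langle g, f\rangle$ — but here one must be careful: the block in \eqref{2.3} contains $AA^* = M_{F^S}^{[*]} M_{F^S} = P$, $A^* = M_{F^S}$, and $I_{\cH(K_S)}$ itself. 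Testing the operator inequality \eqref{2.3} against vectors of the form $c \oplus x \oplus k_z^S y$ and using the reproducing property $\langle g, k_z^S y\rangle_{\cH(K_S)} = \langle g(z), y\rangle_\cY$ together with $M_{F^S}^{[*]} k_z^S y = F^S(z)^* y$ (which holds because $M_{F^S}$ is a multiplier into $\cH(K_S)$) and $\langle k_z^S y, k_\zeta^S y'\rangle = \langle K_S(z,\zeta)y', y\rangle$, one recovers exactly the quadratic form of $\mathbf{K}(z,\zeta)$. Conversely, positivity of $\mathbf{K}$ on all finite point sets yields positivity of \eqref{2.3} on the dense set of vectors with $\cH(K_S)$-component a finite linear combination of kernel functions, hence everywhere by continuity.

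A subsidiary point to verify carefully is that every solution $X \colon \C \to \cH(K_S)$ of \eqref{2.1} with $A = M_{F^S}^{[*]}$ is automatically of the form $M_f$ for some $f \in \cH(K_S)$ — but this is precisely the observation recorded just before Theorem \ref{T:3.6a} (an operator from $\C$ into a function space is evaluation-by-a-fixed-element), so I would simply cite it. One should also note that in \eqref{2.3} the adjoint $A^* = (M_{F^S}^{[*]})^* = M_{F^S}$ is taken in the $\cH(K_S)$-inner product throughout, consistent with the $[*]$ convention; the kernel $K_S$ appearing in the $(3,3)$-entry of $\mathbf{K}$ is exactly the ``symbol'' of $I_{\cH(K_S)}$ under this compression, since $\langle I \cdot k_\zeta^S y', k_z^S y\rangle_{\cH(K_S)} = \langle K_S(z,\zeta) y', y\rangle_\cY$.

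I expect the main obstacle to be purely bookkeeping: keeping the two inner products ($H^2_\cY$ versus $\cH(K_S)$) straight while computing $M_{F^S}^{[*]} k_z^S y$ and while identifying the symbols of the mixed operator entries, and ensuring the density argument is stated at the right level of generality (allowing the coefficient vectors $y_i \in \cY$ to be arbitrary). There is no deep difficulty — the content is entirely in Lemma \ref{L:2.1} and the reproducing-kernel dictionary — but the indices and adjoint conventions must be handled with care to land exactly on the matrix \eqref{2.8} rather than a cosmetically different one.
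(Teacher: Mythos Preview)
Your proposal is correct and follows essentially the same route as the paper: specialize Lemma~\ref{L:2.1}(2) to $A=M_{F^S}^{[*]}$, $B=\mathbf{x}$, $X=M_f$ to obtain the $3\times 3$ operator block matrix, then test positivity against vectors of the form $c\oplus x\oplus K_S(\cdot,z)y$ and invoke density of their span in $\C\oplus\cX\oplus\cH(K_S)$. (One small slip: in your first sentence you refer to ``the already-established equivalence in Theorem~\ref{T:2.1}'', but that is the theorem you are proving; you presumably meant the observation preceding Theorem~\ref{T:3.6a} that any $X\colon\C\to\cH(K_S)$ is an $M_f$.)
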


\begin{proof}
By Lemma \ref{L:2.1} specialized to $A$ and $B$ as in \eqref{2.7}
and $X=M_f$, we conclude that $f$ is a solution to the problem $\AP$
(that is, it meets conditions \eqref{1.3a}) if and only if the following
operator is positive semidefinite:
$$
{\bf P}:=\begin{bmatrix}1 & {\bf x}^* & M_f^{[*]}
\\ {\bf x} & M_{F^S}^{[*]}M_{F^S} &  M_{F^S}^{[*]} \\ M_f &  M_{F^S}
& I_{\cH(K_S)}
\end{bmatrix}= \begin{bmatrix}1 & {\bf x}^* & M_f^{[*]} \\
{\bf x} & P &  M_{F^S}^{[*]} \\ M_f &  M_{F^S} & I_{\cH(K_S)}
\end{bmatrix}\ge 0.
$$
We next observe that for every vector $g\in\C\oplus{\mathcal X}\oplus
\cH(K_S)$
of the form
\begin{equation}
g(z)=\sum_{j=1}^r\left[\begin{array}{c}c_j \\ x_j\\
K_S(\cdot,z_j)y_j\end{array}\right]\qquad
(c_j\in\C, \; y_j\in\cY, \; x_j\in{\mathcal X}, \; z_j\in\D)
\label{2.9}
\end{equation}
the identity
\begin{equation}
\left\langle {\bf P}g, \; g\right\rangle_{\C\oplus\cX\oplus
\cH(K_S)}=\sum_{j,\ell=1}^r \left\langle{\bf K}(z_j,z_\ell)
\sbm{c_\ell \\ x_\ell \\ y_\ell}, \; \sbm{c_j \\ x_j \\ y_j}
\right\rangle_{\C\oplus{\mathcal X}\oplus \cY}
\label{2.10}
\end{equation}
holds. Since the set of
vectors of the form \eqref{2.9} is dense in $\C\oplus{\mathcal X}\oplus
\cH(K_S)$, the identity \eqref{2.10} now implies that the operator
${\bf P}$ is positive semidefinite if and only if the quadratic form on the
right hand side of \eqref{2.10} is nonnegative, i.e., if and only if
the kernel \eqref{2.8} is positive on $\D\times\D$.
\end{proof}

For the rest of this section we assume that the operator $P$ in \eqref{2.6}
is positive definite. Then the operator $M_{F^S}P^{-\frac{1}{2}}$ is an
isometry and the space
\begin{equation}
{\mathcal N}=\{F^S(z)x: \, x\in\cX\}
\quad\mbox{with norm}\quad \|F^Sx\|_{\cH(S)}=\|P^{\frac{1}{2}}x\|_{_\cX}
\label{2.11}
\end{equation}
is isometrically included in  $\cH(K_S)$. Moreover, the orthogonal complement of ${\mathcal N}$
in $\cH(K_S)$ is the reproducing kernel Hilbert space $\cH(\widetilde{K}_S)$
with reproducing kernel
\begin{equation}
\widetilde{K}_S(z,\zeta)=
K_S(z,\zeta)-F^S(z)P^{-1}F^S(\zeta)^{*}.
\label{2.12}
\end{equation}

\begin{thm}\label{T:3.4}
Assume that condition \eqref{2.6} holds and that $P$ is positive definite.
Let $\widetilde{K}_S(z,\zeta)$ be the kernel defined in \eqref{2.12}. Then:
\begin{enumerate}
\item All solutions $f$ to the problem $\AP$ are described by the
formula
\begin{equation}
f(z)=F^S(z)P^{-1}{\bf x}+h(z)
\label{2.14}
\end{equation}
where $h$ is a free parameter from $\cH(\widetilde{K}_S)$  subject to
$$
\|h\|_{\cH(\widetilde{K}_S)}\le \sqrt{1-\|P^{-\frac{1}{2}}{\bf
x}\|^2}.
$$
\item The problem $\AP$ has a unique solution if and only if
\begin{equation}
\|P^{-\frac{1}{2}}{\bf x}\|=1\quad\mbox{or}\quad
\widetilde{K}_S(z,\zeta)\equiv 0.
\label{2.13}
\end{equation}
\end{enumerate}
\end{thm}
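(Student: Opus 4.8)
The plan is to reduce everything to the abstract linear-operator framework of Section~\ref{S:LOE} specialized via \eqref{2.7}, and then use the hypothesis that $P = M_{F^S}^{[*]} M_{F^S}$ is positive definite to make the parametrization in Lemma~\ref{L:2.1} completely explicit. First I would record that, under the assumptions, $M_{F^S} P^{-1/2} \colon \cX \to \cH(K_S)$ is an isometry, so that the closed subspace $\cN$ defined in \eqref{2.11} is isometrically embedded in $\cH(K_S)$ with $F^S x \mapsto P^{1/2} x$, and its reproducing kernel is $F^S(z) P^{-1} F^S(\zeta)^*$; hence the orthogonal complement $\cN^\perp$ carries the reproducing kernel $\widetilde K_S$ of \eqref{2.12}, i.e. $\cH(K_S) = \cN \oplus \cH(\widetilde K_S)$. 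This is the de Branges-Rovnyak-space incarnation of the general fact that subtracting a finite- or infinite-rank ``range'' kernel from a reproducing kernel yields the complementary space.

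Next, for the parametrization in part~(1), I would appeal to the splitting mentioned in the introduction: the set of norm-constrained solutions of the affine equation $M_{F^S}^{[*]} f = \mathbf{x}$ is the minimal-norm solution plus the homogeneous solutions of norm at most $\sqrt{1 - \|f_{\min}\|^2}$. The minimal-norm solution is $f_{\min} = M_{F^S}^{[*]}{}^{\dagger} \mathbf{x}$; because $P$ is invertible one computes directly that $f_{\min}(z) = F^S(z) P^{-1}\mathbf{x}$ lies in $\cN$ and has $\|f_{\min}\|_{\cH(K_S)}^2 = \langle P^{-1}\mathbf{x}, \mathbf{x}\rangle = \|P^{-1/2}\mathbf{x}\|^2$. (One should check $M_{F^S}^{[*]} f_{\min} = \mathbf{x}$: this is immediate from $\langle F^S P^{-1}\mathbf{x}, F^S x\rangle_{\cH(K_S)} = \langle P^{1/2}P^{-1}\mathbf{x}, P^{1/2}x\rangle = \langle \mathbf{x}, x\rangle$.) Meanwhile the homogeneous interpolation condition $M_{F^S}^{[*]} h = 0$ says exactly $h \perp \cN$, i.e. $h \in \cH(\widetilde K_S)$. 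Combining, $f = f_{\min} + h$ with $h \in \cH(\widetilde K_S)$, and the Pythagorean identity $\|f\|^2 = \|f_{\min}\|^2 + \|h\|^2$ turns the constraint $\|f\|_{\cH(K_S)} \le 1$ into $\|h\|_{\cH(\widetilde K_S)}^2 \le 1 - \|P^{-1/2}\mathbf{x}\|^2$, which is \eqref{2.14}. Alternatively, the same conclusion drops out of \eqref{2.4} in Lemma~\ref{L:2.1}: here $X_2$ is a coisometry with $(I - X_2^* X_2)^{1/2}$ the projection onto $\cN^\perp = \cH(\widetilde K_S)$, $X_1 = P^{-1/2}\mathbf{x}$ (a column), $X_2^* X_1 = f_{\min}$, and the free contraction parameter $K$ becomes precisely a vector $h/\sqrt{1 - \|P^{-1/2}\mathbf{x}\|^2} \in \cH(\widetilde K_S)$ of norm $\le 1$.

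For part~(2), uniqueness holds iff the parameter $h$ is forced to be $0$, i.e. iff $1 - \|P^{-1/2}\mathbf{x}\|^2 = 0$ or $\cH(\widetilde K_S) = \{0\}$; the latter is equivalent to $\widetilde K_S \equiv 0$ since a reproducing kernel Hilbert space is trivial exactly when its kernel vanishes identically. This matches the abstract criterion at the end of Lemma~\ref{L:2.1} ($X_1$ isometric means $\|P^{-1/2}\mathbf{x}\| = 1$; $X_2$ isometric means $\cN = \cH(K_S)$, i.e. $\widetilde K_S \equiv 0$). The only genuine work, and the step I expect to be the main obstacle, is the identification of $\cN^\perp$ as the reproducing kernel space $\cH(\widetilde K_S)$ with the stated kernel: one must verify that $\widetilde K_S$ is a positive kernel and that its Hilbert space is isometrically $\cN^\perp \subseteq \cH(K_S)$, which is a standard but slightly delicate complementation-of-kernels computation — checking the reproducing property of $\widetilde K_S(\cdot,\zeta)y$ against an arbitrary $h \in \cN^\perp$ and that these functions span a dense subspace. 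Everything else is bookkeeping with $P$ invertible.
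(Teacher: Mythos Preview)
Your proposal is correct and follows essentially the same route as the paper's proof: identify $X_1 = P^{-1/2}\mathbf{x}$ and $X_2 = P^{-1/2}M_{F^S}^{[*]}$ in the setup \eqref{2.7}, invoke Lemma~\ref{L:2.1}, and translate the abstract parametrization \eqref{2.4} into \eqref{2.14} using that $M_{F^S}P^{-1/2}$ is an isometry so that $I - X_2^*X_2$ projects onto $\cN^\perp = \cH(\widetilde K_S)$. The paper treats the identification of $\cN^\perp$ with $\cH(\widetilde K_S)$ as already established in the paragraph preceding the theorem, so the ``slightly delicate complementation-of-kernels computation'' you flag is indeed standard and not elaborated further there either.
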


\begin{proof}
It is readily seen that
\[
X_1=P^{-\frac{1}{2}}{\bf x}\in\cX \cong\cL(\BC,\cX),\quad
X_2=P^{-\frac{1}{2}}M_{\widetilde{F}^S}^{[*]}\in\cL(\cH(K_S),\cX)
\]
are the operators $X_1$ and $X_2$ from \eqref{2.2} after specialization to the case \eqref{2.7}.

%It is readily seen that $P^{-\frac{1}{2}}{\bf x}\in\cX \cong\cL(\C,\cX)$
%and $P^{-\frac{1}{2}}M_{\widetilde{F}^S}^{[*]}\in\cL(\cH(K_S),\cX)$ are just
%the operators $X_1$ and $X_2$ from \eqref{2.2} specialized to the case
%\eqref{2.7}.

The second statement now follows from Lemma \ref{L:2.1}, since
$P^{-\frac{1}{2}}{\bf x}\in\cL(\C,\cX)$ being isometric means that
$\|P^{-\frac{1}{2}}{\bf x}\|=1$
and, on the other hand, the isometric property for the operator
$M_{F^S}P^{-\frac{1}{2}}$ means that the space ${\mathcal N}$
defined in \eqref{2.11} is equal to the
whole space $\cH(K_S)$.  Thus $\cH(\widetilde K_{S}) = \cH(K_{S})
\ominus \cN = \{0\}$ or $\widetilde K_{S} \equiv 0$.

In the present framework, the parametrization formula \eqref{2.4} takes
the form
\begin{equation}
M_f=M_{F^S}P^{-\frac{1}{2}}{\bf x}+\sqrt{1-\|P^{-\frac{1}{2}}{\bf
x}\|^2}\cdot
(I-M_{F^S}P^{-1}M_{F^S}^{[*]})^{\frac{1}{2}}K
\label{2.16}
\end{equation}
where $K$ is equal to the operator of multiplication $M_k: \,
\C\to\cH(\widetilde{K}_S)$ by a function $k\in\cH(\widetilde{K}_S)$
with $\|k\|\le 1$. Since $M_{F^S}P^{-\frac{1}{2}}$ is an isometry,
the second term on the right hand side of \eqref{2.16} is equal to
the operator $M_h$ of multiplication by a function $h\in
\cH(\widetilde{K}_S)$ such that $\|h\|_{\cH(\widetilde{K}_S)}
=\|h\|_{\cH(K_S)}\le \sqrt{1-\|P^{-\frac{1}{2}}{\bf x}\|^2}$.
\end{proof}

\begin{rem}
{\rm The second term $h$ on the right hand side of \eqref{2.14} represents
in fact the general solution of the homogeneous interpolation problem
(with interpolation condition $M_{F^S}^{[*]}f=0$). If $h$ runs through
the whole space $\cH(\widetilde{K}_S)$, then formula \eqref{2.14}
produces all functions $f\in\cH(K_S)$ such that $M_{F^S}^{[*]}f={\bf x}$.
This unconstrained interpolation problem has a unique solution
if and only if $\widetilde{K}_S(z,\zeta)\equiv 0$. Thus, the second
condition in \eqref{2.13} provides the uniqueness of an $f$ subject to
$M_{F^S}^{[*]}f={\bf x}$ in the whole space $\cH(K_S)$, not just in the unit
ball of  $\cH(K_S)$. If $\widetilde{K}_S(z,\zeta)\not\equiv 0$, then the
unconstrained problem
has infinitely many solutions and, as in the general framework, the
function $F^S(z)P^{-1}{\bf x}$ has the minimal possible
norm. Since $M_{F^S}P^{-\frac{1}{2}}$ is an isometry,
it follows from \eqref{2.11} that
$\|M_{F^S}P^{-1}{\bf x}\|_{\cH(K_S)}=\|P^{-\frac{1}{2}}{\bf x}\|$. Thus,
if $\|P^{-\frac{1}{2}}{\bf x}\|=1$, then uniqueness occurs
since the {\em minimal norm solution} already has unit norm.}
\label{R:2.6}
\end{rem}

\section{Redheffer transform related to the AIP-problem on $\cS(\cU,\cY)$}
\label{S:Redheffer}

To obtain a more explicit parametrization of the solution set
to the $\AP$-problem, we need some facts concerning the Abstract
Interpolation Problem for functions in the Schur Class
$\cS(\cU, \cY)$ (denoted as the $\APS$-problem) from \cite{kky} (see also \cite{khy, khberk})
which we now recall.

We consider the  data set
\begin{equation}
{\mathcal D}^\prime=\{P,T,E,N\}
\label{3.1}
\end{equation}
consisting of operators $P,T\in\cL(\cX)$, $E\in\cL(\cX,\cY)$,
$N\in\cL(\cX,\cU)$ such that the pairs $(E,T)$ and $(N,T)$ are output
analytic and $P$ is a positive semidefinite solution of the Stein
equation \eqref{1.11}. A set with these properties is called
{\em $\APS$-admissible}.\smallskip

$\APS$: {\em Given an $\APS$-admissible data set \eqref{3.1},
find all functions $S: \, \D\to\cL(\cU,\cY)$  such that the
kernel
\begin{equation}
(z,\zeta)\mapsto\begin{bmatrix}P & F^S(\zeta)^* \\ F^S(z) &
K_S(z,\zeta)\end{bmatrix}\quad (z,\zeta\in\D)
\label{3.2}
\end{equation}
is positive on $\D\times \D$, or equivalently, find all functions
$S\in\cS(\cU,\cY)$ so that the operator
$M_{F^S}=\cO_{E,T}-M_S\cO_{N,T}: x\to F^S(z)x$
maps $\cX$ into $\cH(K_S)$ and satisfies $M_{F^S}^{[*]}M_{F^S}\le P$.
Here $F^S$ is the function defined in \eqref{1.17}.}

\smallskip

The equivalence of the two above formulations follows from a general
result on reproducing kernel Hilbert spaces; see \cite{beabur}.

The parametrization of the solutions through an associated Redheffer
transform is recalled in Theorem \ref{T:3.1} below. To state
the result we need to construct the Redheffer transform.
Observe that \eqref{1.11} can equivalently be written as
\[
\|P^{\frac{1}{2}}x\|^2+\|Nx\|^2=\|P^{\frac{1}{2}}Tx\|^2+\|Ex\|^2\quad\text{for
all}\quad x\in\cX.
\]
Let $\cX_0=\ov{\Ran} P^{\frac{1}{2}}$. With some abuse of notation we will occasionally
view $P$ as an operator mapping $\cX$ into $\cX_0$, or $\cX_0$ into $\cX$, while
still using $P=P^*$.
The above identity shows that there exists a well defined isometry
$V$ with domain ${\mathcal D}_V$ and range ${\mathcal R}_V$ equal
to
$$
    {\mathcal {D}}_V=\overline{\rm Ran}\left[\begin{array}{c}
    P^\half \\ N\end{array}\right]\subseteq \left[\begin{array}{c}
    \cX_0 \\  \cU\end{array}\right]\quad\mbox{and}\quad
    {\mathcal {R}}_V=\overline{\rm Ran}
    \left[\begin{array}{c}P^\half T\\
    E\end{array}\right]\subseteq \left[\begin{array}{c}
    \cX_0 \\  \cY\end{array}\right],
$$
respectively, which is uniquely determined by the identity
 \begin{equation}\label{3.3}
    V \left[\begin{array}{c}P^\half x \\
    Nx\end{array}\right]
   = \left[\begin{array}{c} P^\half Tx\\
    Ex\end{array}\right]\quad\mbox{for all} \; \; x\in\cX.
    \end{equation}
We then define the defect spaces
    \begin{equation}\label{3.4}
    \Delta:=\left[\begin{array}{c}\cX_0 \\ \cU\end{array}\right]
    \ominus{\mathcal D}_V \quad{\rm and}\quad
    \Delta_*:=\left[\begin{array}{l}\cX_0\\
    \cY\end{array}\right]\ominus{\mathcal {R}}_V,
    \end{equation}
    and let $\widetilde{\Delta}$ and $\widetilde{\Delta}_*$ denote
    isomorphic copies of $\Delta$ and $\Delta_*$, respectively, with
    unitary identification maps
    $$
    {\bf i}: \; \Delta\rightarrow \widetilde{\Delta}\quad\mbox{and}\quad
    {\bf i}_*: \; \Delta_*\rightarrow \widetilde{\Delta}_*.
    $$
With these identification maps we define a unitary colligation matrix {\bf U} from
$\cD_V\oplus\De\oplus\wtil\De_*=\cX\oplus\cU\oplus\wtil\De_*$
onto $\cR_V\oplus\De_*\oplus\wtil\De=\cX\oplus\cY\oplus\wtil\De$
by
\begin{equation}\label{AIPcol1}
{\bf U}=\mat{ccc}{V & 0 & 0\\0 & 0 & {\mathbf i}_*^*\\0 & {\mathbf i} & 0}
:\mat{c}{\cD_V\\\De\\\wtil\De_*}\to\mat{c}{\cR_V\\\De_*\\\wtil\De},
\end{equation}
which we also decompose as
\begin{equation}\label{3.6}
{\bf U}=\mat{ccc}{A&B_1&B_2\\C_1&D_{11}&D_{12}\\C_2&D_{21}&0}
:\mat{c}{\cX_0\\\cU\\\wtil\De_*}\to\mat{c}{\cX_0\\\cY\\\wtil\De}.
\end{equation}
Write $\Si$ for the characteristic function associated with this
colligation ${\bf U}$, i.e.,
\begin{equation}\label{3.7}
\Si(z)=\mat{cc}{D_{11}&D_{12}\\D_{21}&0}+z\mat{c}{C_1\\C_2}(I-zA)^{-1}\mat{cc}{B_1&B_2}\quad
(z\in\D),
\end{equation}
and decompose $\Si$ as
\begin{equation}\label{RedCoefs2}
\Si(z)=\mat{cc}{\Si_{11}(z)&\Si_{12}(z)\\\Si_{21}(z)&\Si_{22}(z)}
:\mat{c}{\cU\\\wtil{\De}_*}\to\mat{c}{\cY\\\wtil{\De}}.
\end{equation}
A straightforward calculation based on the fact that ${\bf U}$ is coisometric
gives
\begin{equation}
\frac{I-\Sigma(z)\Sigma(\zeta)^*}{1-z\overline{\zeta}}=
\begin{bmatrix}C_1\\C_2\end{bmatrix}(I-zA)^{-1}(I-\overline{\zeta}A^*)^{-1}
\begin{bmatrix}C_1^*&C_2^*\end{bmatrix},
\label{3.8}
\end{equation}
which implies in particular that $\Sigma$ belongs to the Schur class
$\cS(\cU\oplus\widetilde{\Delta}_*,\cY\oplus\widetilde{\Delta})$. Moreover,
it follows from the construction that $\Si_{22}(0)=0$. These facts
imply that the Redheffer linear fractional transform
\begin{equation}
S={\boldsymbol{\mathcal R}}_\Sigma[\cE]:=\Sigma_{11}+\Sigma_{12}(I-\cE\Sigma_{22})^{-1}\cE\Sigma_{21}
\label{3.9}
\end{equation}
is well defined for every Schur-class function
$\cE\in\cS(\widetilde{\Delta},\widetilde{\Delta}_*)$. The next theorem
(see \cite{kky} for the proof) shows that the image of the class
$\cS(\widetilde{\Delta},\widetilde{\Delta}_*)$ under the
Redheffer transform ${\boldsymbol{\mathcal R}}_\Sigma$ is precisely the
solution set of the problem $\APS$.

\begin{thm}
\label{T:3.1}
Given an $\APS$-admissible data set \eqref{3.1}, let
${\boldsymbol{\mathcal R}}_\Sigma$ be the  Redheffer transform
constructed as in \eqref{3.7}, \eqref{3.9}. A function $S: \,
\D\to\cL(\cU,\cY)$ is a solution of the problem $\APS$
if and only if $S={\boldsymbol{\mathcal R}}_\Sigma[\cE]$ for
some $\cE\in\cS(\widetilde{\Delta},\widetilde{\Delta}_*)$.
\end{thm}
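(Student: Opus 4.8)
The plan is to verify directly that the Redheffer transform $S = \Red_\Sigma[\cE]$ produces a solution of $\APS$, and conversely that every solution arises this way; the engine is the coisometry property of $\bU$ together with the Kolmogorov/Kolmogorov-type factorization \eqref{3.8} of $I - \Sigma\Sigma^*$.

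First, for the forward direction, fix $\cE \in \cS(\wtil\De, \wtil\De_*)$ and set $S = \Red_\Sigma[\cE]$. One knows $S \in \cS(\cU,\cY)$ since Redheffer transforms of Schur functions with $\Si_{22}(0) = 0$ stay in the Schur class (a chain-scattering/linear-fractional computation; the hypothesis $\Si_{22}(0)=0$ guarantees $(I - \cE(0)\Si_{22}(0))^{-1} = I$ so the transform is well defined near $0$, and contractivity propagates). The substantive point is to produce a Kolmogorov decomposition of the kernel \eqref{3.2}. I would introduce the auxiliary function $g(z) = (I - \cE(z)\Si_{22}(z))^{-1}\cE(z)\Si_{21}(z) \in \cL(\cU, \wtil\De)$, so that the ``internal signal'' in the Redheffer setup satisfies the standard system-theoretic relations coming from \eqref{3.6}: writing everything in terms of $A, B_1, B_2, C_1, C_2, D_{ij}$ and using the identity \eqref{3.3} that defines $V$ (equivalently $A, C_1, C_2$ restricted appropriately), one shows that the colligation state vector $(I-zA)^{-1}(\cdots)x$ reproduces $P^{1/2}$-data, i.e. that $F^S(z)x = (\text{some explicit observation of the state})$ and that the two-block kernel \eqref{3.2} factors as $H(z)H(\zeta)^*$ where $H$ is built from $(I-zA)^{-1}$, the $C_i$, and $g$. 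Here the Stein equation \eqref{1.11} is used precisely to identify the $(1,1)$-block $P$ with $H_1(z)H_1(\zeta)^*$ along the lines of \eqref{3.8}. Positivity of \eqref{3.2} is then automatic, so $S$ solves $\APS$.

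For the converse, suppose $S \in \cS(\cU,\cY)$ solves $\APS$, so the kernel \eqref{3.2} is positive. Take a Kolmogorov decomposition of \eqref{3.2}; the $(2,2)$-block gives the de Branges--Rovnyak factorization of $K_S$ via the canonical functional-model colligation for $S$, and the cross-block $F^S(z) = (E - S(z)N)(I-zT)^{-1}$ together with the $(1,1)$-block $P$ forces a compatibility between the $P^{1/2}$-geometry and the model space. Concretely, the positivity of \eqref{3.2} plus the Stein equation \eqref{1.11} lets one define a contraction (in fact one checks it extends the isometry $V$ of \eqref{3.3}) on the appropriate defect-completed space, and then $\cE$ is extracted as the ``free parameter'' part: $\cE(z) = \Si_{22}\text{-corrected inverse of the map from }\wtil\De_*\text{-data to }\wtil\De\text{-data}$ induced by $S$ and the model. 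One then checks $\cE \in \cS(\wtil\De, \wtil\De_*)$ and that $\Red_\Sigma[\cE] = S$ by reversing the algebra of the forward direction. Since Theorem \ref{T:3.1} is quoted from \cite{kky}, I would in practice cite that source for the full verification and only indicate this outline.

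The main obstacle is the converse direction, specifically the extraction of $\cE$ and the proof that it is contractive: one must show that the ``defect data'' produced by an arbitrary $\APS$-solution $S$ can be organized into a well-defined Schur-class function on the correct spaces $\wtil\De, \wtil\De_*$, and this requires carefully tracking how the chosen Kolmogorov decomposition interacts with the fixed colligation $\bU$ — the spaces $\De, \De_*$ in \eqref{3.4} were defined so that exactly this works, but matching an abstract decomposition to them (rather than to some unitarily equivalent copy) is the delicate step, handled in \cite{kky} by the lurking-isometry argument. A secondary technical point is the degenerate case $\cX_0 = \ov{\Ran}\,P^{1/2} \subsetneq \cX$, where $P$ is only positive semidefinite; the convention of viewing $P$ as a map $\cX \to \cX_0$ (as noted before \eqref{3.3}) is what keeps all the identities meaningful.
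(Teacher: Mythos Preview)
Your proposal is consistent with the paper's treatment: the paper does not prove Theorem~\ref{T:3.1} at all but simply refers the reader to \cite{kky} for the proof, and you likewise conclude by deferring to that source. Your sketch of the lurking-isometry argument is a reasonable outline of what is done there, but strictly speaking goes beyond what the paper itself provides.
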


The function $\Sigma_{11}$ appears as a solution upon taking $\cE\equiv 0$, and
is called {\em the central solution} of the problem $\APS$. In case the
problem has only one solution, this solution must be the central solution.

%%%%%%%%%%%%%%%%%%%%%%%%%%%%%%%%%%%%%%%%%%%%%%%%%%%%%%%%%%%%%%%%%%%%%%%%
\begin{prop}\label{P:ContrMult}
Let $\Sigma$ be the characteristic function of the unitary colligation
${\bf U}$ in \eqref{3.6}, decomposed as in \eqref{RedCoefs2}, and let
$S={\boldsymbol{\mathcal R}}_\Sigma[\cE]\in\cS(\cU,\cY)$
for a given $\cE\in\cS(\widetilde{\Delta},\widetilde{\Delta}_*)$.
Define the functions
\begin{equation}\label{GGa}
\begin{array}{rl}
G(z)\!\!\!&=\Sigma_{12}(z)(I-\cE(z)\Sigma_{22}(z))^{-1},\\[.2cm]
\Gamma(z)\!\!\!&=\left(C_1+G(z)\cE(z)C_2\right)(I-z A)^{-1}.
\end{array}\qquad(z\in\BD)
\end{equation}
Then $G$ defines a contractive multiplier $M_G:\cH(K_\cE)\to\cH(K_S)$,
$\Ga$ a contractive multiplier $M_\Ga:\cX_0\to \cH(K_S)$, and the operator
\begin{equation}
\begin{bmatrix} M_G & M_\Gamma\end{bmatrix}: \; \begin{bmatrix}
\cH(K_\cE) \\ \cX_0\end{bmatrix}\to \cH(K_S)
\label{3.14}
\end{equation}
is coisometric. Furthermore,  we have
\begin{equation}  \label{3.14'}
    F^S(z)=\Ga(z)P^{\half}
\end{equation}
for each $z\in\D$,
where $F^S$ is the function defined in \eqref{1.17}. In particular,
$M_\Ga$ is an isometry and $M_G$ a partial isometry if and only if
$P=M_{F^{S}}^{[*]}M_{F^S}$ with $M_{F^S}:\cX_0\to\cH(K_S)$ defined by
$M_{F^S}=M_\Ga P^\half$.
\end{prop}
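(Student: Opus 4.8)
The plan is to reduce everything to a single fundamental kernel identity:
\begin{equation}\label{keyid}
K_S(z,\zeta)=G(z)K_\cE(z,\zeta)G(\zeta)^*+\Gamma(z)\Gamma(\zeta)^*\qquad(z,\zeta\in\D),
\end{equation}
where $K_\cE(z,\zeta)=(I-\cE(z)\cE(\zeta)^*)/(1-z\bar\zeta)$. Granting \eqref{keyid}, the first three assertions follow by routine reproducing-kernel bookkeeping. Since \eqref{keyid} exhibits both $K_S-GK_\cE G^*=\Gamma\Gamma^*$ and $K_S-\Gamma\Gamma^*=GK_\cE G^*$ as positive kernels, the standard criterion for contractive multipliers between reproducing kernel Hilbert spaces (see \cite{beabur}; here $\cX_0$ is viewed as the reproducing kernel Hilbert space of constant functions with kernel $I_{\cX_0}$) shows that $M_G\colon\cH(K_\cE)\to\cH(K_S)$ and $M_\Gamma\colon\cX_0\to\cH(K_S)$ are well defined and contractive. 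Computing adjoints on kernel functions gives $M_G^{[*]}(K_S(\cdot,\zeta)y)=K_\cE(\cdot,\zeta)G(\zeta)^*y$ and $M_\Gamma^{[*]}(K_S(\cdot,\zeta)y)=\Gamma(\zeta)^*y$, so that $(M_GM_G^{[*]}+M_\Gamma M_\Gamma^{[*]})(K_S(\cdot,\zeta)y)$, evaluated at $z$, equals $G(z)K_\cE(z,\zeta)G(\zeta)^*y+\Gamma(z)\Gamma(\zeta)^*y=K_S(z,\zeta)y$ by \eqref{keyid}; as the functions $K_S(\cdot,\zeta)y$ span a dense subspace of $\cH(K_S)$, this forces $M_GM_G^{[*]}+M_\Gamma M_\Gamma^{[*]}=I_{\cH(K_S)}$, i.e.\ $\sbm{M_G & M_\Gamma}$ is coisometric.

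To prove \eqref{keyid} I would run a ``lurking coisometry'' computation off \eqref{3.8}. Writing $C_1+G(z)\cE(z)C_2=\sbm{I & G(z)\cE(z)}\sbm{C_1\\ C_2}$ and sandwiching \eqref{3.8} between $\sbm{I & G(z)\cE(z)}$ and its adjoint evaluated at $\zeta$ yields
$$(1-z\bar\zeta)\,\Gamma(z)\Gamma(\zeta)^*=\sbm{I & G(z)\cE(z)}\,(I-\Sigma(z)\Sigma(\zeta)^*)\,\sbm{I\\ \cE(\zeta)^*G(\zeta)^*}.$$
The linchpin is the algebraic identity $\Sigma_{12}(z)+G(z)\cE(z)\Sigma_{22}(z)=G(z)$, which is merely the defining relation $G(I-\cE\Sigma_{22})=\Sigma_{12}$ from \eqref{GGa}; combined with $\Sigma_{11}(z)+G(z)\cE(z)\Sigma_{21}(z)=S(z)$ (which is \eqref{3.9} rewritten via \eqref{GGa}) it gives $\sbm{I & G(z)\cE(z)}\Sigma(z)=\sbm{S(z) & G(z)}$. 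Substituting, the right-hand side above collapses to $I+G(z)\cE(z)\cE(\zeta)^*G(\zeta)^*-S(z)S(\zeta)^*-G(z)G(\zeta)^*$; adding $(1-z\bar\zeta)G(z)K_\cE(z,\zeta)G(\zeta)^*=G(z)(I-\cE(z)\cE(\zeta)^*)G(\zeta)^*$ cancels the $G\cE\cE^*G^*$ and $GG^*$ terms, leaving $I-S(z)S(\zeta)^*=(1-z\bar\zeta)K_S(z,\zeta)$, which is \eqref{keyid}.

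For $F^S(z)=\Gamma(z)P^{1/2}$ I would first extract three operator identities from the fact that \eqref{AIPcol1} and \eqref{3.6} describe the \emph{same} operator ${\bf U}$. Applying ${\bf U}$ to the vectors $\sbm{P^{1/2}x\\ Nx\\ 0}$, which lie in $\cD_V\oplus\{0\}\oplus\{0\}$ since $\sbm{P^{1/2}x\\ Nx}\in\cD_V$, and using the defining identity \eqref{3.3} of $V$ on one side and \eqref{3.6} on the other, one obtains
$$P^{1/2}T=AP^{1/2}+B_1N,\qquad E=C_1P^{1/2}+D_{11}N,\qquad C_2P^{1/2}+D_{21}N=0.$$
The first gives $P^{1/2}(I-zT)=(I-zA)P^{1/2}-zB_1N$, hence $(I-zA)^{-1}P^{1/2}(I-zT)=P^{1/2}-z(I-zA)^{-1}B_1N$. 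Multiplying on the left by $C_1+G(z)\cE(z)C_2$, using the other two identities to replace $(C_1+G(z)\cE(z)C_2)P^{1/2}$ by $E-(D_{11}+G(z)\cE(z)D_{21})N$, and recognizing from the realization \eqref{3.7} that $D_{11}+G(z)\cE(z)D_{21}+z(C_1+G(z)\cE(z)C_2)(I-zA)^{-1}B_1=\Sigma_{11}(z)+G(z)\cE(z)\Sigma_{21}(z)=S(z)$, one gets $\Gamma(z)P^{1/2}(I-zT)=E-S(z)N$, i.e.\ $F^S(z)=\Gamma(z)P^{1/2}$ in view of \eqref{1.17}.

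Finally, the ``in particular'' assertion. The identity $F^S=\Gamma P^{1/2}$ says precisely that $M_{F^S}=M_\Gamma P^{1/2}$ as operators $\cX_0\to\cH(K_S)$, with $P^{1/2}$ regarded as acting on $\cX_0=\ov{\Ran}\,P^{1/2}$, on which it is injective with dense range. Hence $M_{F^S}^{[*]}M_{F^S}=P^{1/2}(M_\Gamma^{[*]}M_\Gamma)P^{1/2}$, and by density of $\Ran P^{1/2}$ in $\cX_0$ this equals $P=P^{1/2}P^{1/2}$ if and only if $M_\Gamma^{[*]}M_\Gamma=I_{\cX_0}$, that is, if and only if $M_\Gamma$ is an isometry; and when $M_\Gamma$ is an isometry, $M_GM_G^{[*]}=I_{\cH(K_S)}-M_\Gamma M_\Gamma^{[*]}$ is, by the coisometry just established, the orthogonal projection onto $(\Ran M_\Gamma)^\perp$, so $M_G$ is automatically a partial isometry. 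I expect the main obstacle to be purely computational: keeping the Redheffer algebra straight in \eqref{keyid} (the crucial cancellation being driven by $\Sigma_{12}+G\cE\Sigma_{22}=G$), and carefully matching the two block decompositions of ${\bf U}$ to read off the three operator identities used for $F^S=\Gamma P^{1/2}$.
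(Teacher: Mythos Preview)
Your proposal is correct and follows essentially the same approach as the paper's proof: both establish the key kernel decomposition $K_S(z,\zeta)=G(z)K_\cE(z,\zeta)G(\zeta)^*+\Gamma(z)\Gamma(\zeta)^*$ via the identity $\sbm{I & G\cE}\Sigma=\sbm{S & G}$ sandwiched against \eqref{3.8}, then read off the three operator relations from matching the two block decompositions of ${\bf U}$ to obtain $F^S=\Gamma P^{1/2}$. Your treatment of the final ``in particular'' clause is slightly more explicit than the paper's (which simply calls it ``straightforward''), but the argument is the same.
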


\begin{proof} The identity $\begin{bmatrix}I & G(z)\cE(z)\end{bmatrix}\Sigma(z)=
\begin{bmatrix}S(z) & G(z)\end{bmatrix}$ is an immediate consequence of
\eqref{3.9} and the definition of $G(z)$ in \eqref{GGa}. Using this identity one can
easily compute that
\begin{align*}
I-S(z)S(\zeta)^*=& G(z)(I-\cE(z)\cE(\zeta)^*)G(\zeta)^*+
\mat{cc}{I&G(z)\cE(z)}\mat{c}{I\\\cE(\zeta)^*G(\zeta)^*}\\
&\quad -\mat{cc}{S(z)&G(z)}\mat{c}{S(\zeta)^*\\G(\zeta)^*}\\
= &  G(z)\left(
I-{\cE}(z){\cE}(\zeta)^*\right)G(\zeta)^*  \\
&\quad +\begin{bmatrix}I &
G(z)\cE(z)\end{bmatrix}\left(I-\Sigma(z)\Sigma(\zeta)^*\right)
\begin{bmatrix}I \\ \cE(\zeta)^*G(\zeta)^*\end{bmatrix};
\end{align*}
(see also \cite[Lemma 8.3]{boldym}). Dividing both
sides of the latter identity by $1-z\overline{\zeta}$ leads to
$$
K_S(z,\zeta)= G(z)K_\cE(z,\zeta)G(\zeta)^*+\begin{bmatrix}I &
G(z)\cE(z)\end{bmatrix}K_\Sigma(z,\zeta)
\begin{bmatrix}I \\ \cE(\zeta)^*G(\zeta)^*\end{bmatrix}.
$$
By replacing $K_\Sigma(z,\zeta)$ by the expression on the right hand side of
\eqref{3.8}, we get
\begin{equation}\label{3.14b}
K_S(z,\zeta)= G(z)K_\cE(z,\zeta)G(\zeta)^*+\Gamma(z)\Gamma(\zeta)^*.
\end{equation}
It is easy to verify that
$$
    M_{G}^{*} \colon K_{S}(\cdot, \zeta) y \mapsto K_{S}(\cdot,
    \zeta) G(\zeta)^{*} y,  \quad M_{\Gamma}^{*} \colon K_{S}(\cdot,
    \zeta) y \mapsto \Gamma(\zeta)^{*}y
$$
from which one can deduce that the context of \eqref{3.14b} is that
$\begin{bmatrix} M_{G} & M_{\Gamma}^{*} \end{bmatrix}^{*}$ is
isometric on $\overline{\operatorname{span}} \{ K_{S}(\cdot, \zeta) y
\colon \zeta \in {\mathbb D}, y \in \cY\} = \cH(K_{S})$, i.e.,
$\begin{bmatrix} M_{G} & M_{\Gamma} \end{bmatrix}$ is coisometric as
asserted. In particular,
we see that $M_G$ and $M_\Ga$ are contractions.

To verify \eqref{3.14'},  recall
that the very construction of the colligation ${\bf U}$ implies that
$$
\begin{bmatrix}A&B_1\\C_1&D_{11}\\C_2&D_{21}\end{bmatrix}\begin{bmatrix}
P^\half\\ N\end{bmatrix}
=\begin{bmatrix}P^\half T\\E\\0\end{bmatrix},
$$
so that
$AP^\half-P^\half T=B_1N, \quad C_1P^\half=E-D_{11}N,\quad C_2P^\half=-D_{21}N$.
Making use of the latter equalities and of realization formulas
for $\Sigma_{11}$ and $\Sigma_{21}$ in \eqref{3.7} we
compute for $z\in\D$,
\begin{align}
\begin{bmatrix}C_1\\C_2\end{bmatrix}(I-z A)^{-1}P^\half(I-zT)
    &= \begin{bmatrix}C_1 \\C_2\end{bmatrix}P^\half
+z\begin{bmatrix}C_1\\C_2\end{bmatrix}(I-z
A)^{-1}(AP^\half-P^\half T)\notag \\  & =\begin{bmatrix}E-D_{11}N
\\-D_{21}N\end{bmatrix}-z\begin{bmatrix}C_1\\C_2\end{bmatrix}(I-z
A)^{-1}B_1 N\notag \\ &
=\begin{bmatrix}E\\0\end{bmatrix}-\begin{bmatrix}\Sigma_{11}(z)\\
\Sigma_{21}(z)\end{bmatrix}N.
\label{3.13}
\end{align}
Upon multiplying the left-hand side expression in \eqref{3.13} by
$\begin{bmatrix}I & G(z)\cE(z)\end{bmatrix}$ on the left and by
$(I-zT)^{-1}$ on the right, we get $\Gamma(z)P^\half$ by definition
\eqref{GGa}. Applying the same multiplications to the right hand
side expression gives, on account of \eqref{GGa} and \eqref{3.9},
$$
E(I-z T)^{-1}-(\Sigma_{11}(z)+G(z)\cE(z)\Sigma_{21}(z))N(I-z T)^{-1}
=(E-S(z)N)(I-z T)^{-1}
$$
which is $F^S(z)$. Thus $\Gamma(z)P^\half=F^S(z)$, and \eqref{3.14'}
follows.

It is now straightforward to verify that $M_\Ga$ is an isometry if and only if
$P=M_{F^{S}}^{[*]}M_{F^S}$, while, since \eqref{3.14} is a coisometry,
$M_\Ga$ being an isometry implies that $M_G$ is a partial isometry.
\end{proof}

\subsection{Injectivity of ${\boldsymbol R}_{\Sigma}$ and $M_{G}$}

In this subsection we focus on two questions: (1) when is the
above constructed Redheffer transform ${\boldsymbol{\mathcal R}}_\Sigma$
injective, and, (2) for a given $\cE\in\cS(\wtil{\De},\wtil{\De}_*)$, when is the
multiplication operator $M_G:\cH(K_\cE)\to\cH(K_S)$ from Proposition
\ref{P:ContrMult} injective (and thus an isometry if $P=M_{F^{S}}^{[*]}M_{F^S}$)?
The next lemma provides the basis for the results to follow.

%%%%%%%%%%%%%%%%%%%%%%%%%%%%%%%%%%%%%%%%%%%%%%%%%%%%%%%%%%%%%%%%%%%%%%%%
\begin{lem}\label{L:noker}
Assume $M_{\Si_{12}}:\textup{Hol}_{\wtil{\De}_*}(\BD)\to\textup{Hol}_\cY(\BD)$ has a
trivial kernel and $M_{\Si_{21}}:\textup{Hol}_{\cU}(\BD)\to\textup{Hol}_{\wtil{\De}}(\BD)$
has dense range. Then the Redheffer transform ${\boldsymbol{\mathcal R}}_\Sigma$ is
injective, and for any $\cE\in\cS(\wtil{\De},\wtil{\De}_*)$ the multiplication operator
$M_G:\cH(K_\cE)\to\cH(K_S)$ has trivial kernel.
\end{lem}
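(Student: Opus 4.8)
The plan is to exploit the two key identities from Proposition \ref{P:ContrMult} together with the explicit form of the Redheffer transform \eqref{3.9}. For the injectivity of $\boldsymbol{\mathcal R}_\Sigma$, suppose $S = \boldsymbol{\mathcal R}_\Sigma[\cE] = \boldsymbol{\mathcal R}_\Sigma[\cE']$ for two Schur-class functions $\cE, \cE' \in \cS(\wtil\De, \wtil\De_*)$. From \eqref{3.9} we have, for each $z \in \D$,
\[
\Si_{12}(z)\bigl[(I - \cE(z)\Si_{22}(z))^{-1}\cE(z) - (I - \cE'(z)\Si_{22}(z))^{-1}\cE'(z)\bigr]\Si_{21}(z) = 0.
\]
The hypothesis that $M_{\Si_{12}}$ has trivial kernel forces, for every $z$ (using that the bracketed middle factor times $\Si_{21}(z)$ is a holomorphic function of $z$ with values mapping into the relevant space), that
\[
(I - \cE(z)\Si_{22}(z))^{-1}\cE(z)\Si_{21}(z) = (I - \cE'(z)\Si_{22}(z))^{-1}\cE'(z)\Si_{21}(z).
\]
I then rearrange this to $\cE(z)\Si_{21}(z) = \cE'(z)\Si_{21}(z)$ as holomorphic functions (using that $I - \cE\Si_{22}$ and $I - \cE'\Si_{22}$ are pointwise invertible and the identity $(I-XY)^{-1}X = X(I-YX)^{-1}$ to transfer the resolvent factor to the other side), and finally the density of the range of $M_{\Si_{21}}$ yields $\cE = \cE'$.

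For the triviality of $\ker M_G$, I would argue directly from the definition $G(z) = \Si_{12}(z)(I - \cE(z)\Si_{22}(z))^{-1}$ in \eqref{GGa}. Since $I - \cE(z)\Si_{22}(z)$ is invertible for every $z \in \D$, the factor $(I - \cE(z)\Si_{22}(z))^{-1}$ is a pointwise-invertible multiplier, so $M_G h = 0$ for $h \in \cH(K_\cE)$ forces $M_{\Si_{12}}\bigl((I - \cE\Si_{22})^{-1}h\bigr) = 0$ in $\textup{Hol}_\cY(\D)$; the trivial-kernel hypothesis on $M_{\Si_{12}}$ then gives $(I - \cE\Si_{22})^{-1}h = 0$, hence $h = 0$. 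One subtlety to address is that $M_{\Si_{12}}$ is a priori an operator on holomorphic-function spaces, so I must first observe that $M_G$ does land in $\cH(K_S)$ (guaranteed by Proposition \ref{P:ContrMult}) and that the composition makes sense as a map of holomorphic functions before invoking the kernel hypothesis.

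The main obstacle I anticipate is the first argument — deducing $\cE = \cE'$ from the vanishing of the sandwiched expression. The trouble is that $M_{\Si_{12}}$ having trivial kernel is a statement about a single multiplier acting on a function space, whereas the quantity I need to kill, $(I - \cE\Si_{22})^{-1}\cE\Si_{21} - (I - \cE'\Si_{22})^{-1}\cE'\Si_{21}$, is itself an $\cL(\cU, \wtil\De_*)$-valued (or operator-valued) holomorphic function, and I need to apply the trivial-kernel property columnwise, i.e. to $M_{\Si_{12}}$ applied to each $\textup{Hol}_{\wtil\De_*}(\D)$-valued "column." I would handle this by fixing an arbitrary $u \in \cU$, applying the operator identity to the vector-valued holomorphic function obtained by feeding in the constant $u$ (composed with $\Si_{21}$), and concluding vanishing of that vector-valued function; then I vary $u$. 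The analogous care is needed at the density step: $M_{\Si_{21}}$ having dense range must be used to conclude that if $(\cE(z) - \cE'(z))\Si_{21}(z) \equiv 0$ as a function, then $\cE - \cE' \equiv 0$, which again is cleanest done pointwise in $z$ after noting that $\overline{\Ran}\, M_{\Si_{21}}$ contains enough vectors at each point $z$ to separate $\wtil\De$-valued targets.
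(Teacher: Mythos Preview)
Your overall outline matches the paper's proof, and the argument for $\ker M_G=\{0\}$ is correct and essentially identical to the paper's. The gap is in the injectivity argument for $\boldsymbol{\mathcal R}_\Sigma$: the intermediate step you claim, namely passing from
\[
(I-\cE\Sigma_{22})^{-1}\cE\,\Sigma_{21}=(I-\cE'\Sigma_{22})^{-1}\cE'\,\Sigma_{21}
\]
to $\cE\,\Sigma_{21}=\cE'\,\Sigma_{21}$, does not follow from the resolvent identity $(I-XY)^{-1}X=X(I-YX)^{-1}$. Applying that identity only moves the inverse to the other side of $\cE$ (respectively $\cE'$); it does not make the two resolvent factors cancel, because $(I-\Sigma_{22}\cE)^{-1}$ and $(I-\Sigma_{22}\cE')^{-1}$ are different operators and $\Sigma_{21}$ sits to their right. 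Already in the scalar case $(1-es_{22})^{-1}es_{21}=(1-e's_{22})^{-1}e's_{21}$ does \emph{not} give $es_{21}=e's_{21}$.

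The repair is simple and is precisely what the paper does: use the dense-range hypothesis on $M_{\Sigma_{21}}$ \emph{before} the algebraic rearrangement, not after. From $M_{\Sigma_{12}}\bigl[(I-M_\cE M_{\Sigma_{22}})^{-1}M_\cE-(I-M_{\cE'}M_{\Sigma_{22}})^{-1}M_{\cE'}\bigr]M_{\Sigma_{21}}=0$, trivial kernel of $M_{\Sigma_{12}}$ and dense range of $M_{\Sigma_{21}}$ (together with continuity of multiplication operators in the Fr\'echet topology on $\operatorname{Hol}(\mathbb D)$) give the operator identity
\[
(I-M_\cE M_{\Sigma_{22}})^{-1}M_\cE=(I-M_{\cE'}M_{\Sigma_{22}})^{-1}M_{\cE'}=M_{\cE'}(I-M_{\Sigma_{22}}M_{\cE'})^{-1}.
\]
Now cross-multiplying by $(I-M_\cE M_{\Sigma_{22}})$ on the left and $(I-M_{\Sigma_{22}}M_{\cE'})$ on the right yields $M_\cE-M_\cE M_{\Sigma_{22}}M_{\cE'}=M_{\cE'}-M_\cE M_{\Sigma_{22}}M_{\cE'}$, hence $\cE=\cE'$. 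Note also that your proposed pointwise-in-$z$ use of density (``$\overline{\operatorname{Ran}}\,M_{\Sigma_{21}}$ contains enough vectors at each point $z$'') is not the right mechanism: dense range of $M_{\Sigma_{21}}$ on $\operatorname{Hol}_{\widetilde\Delta}(\mathbb D)$ does not imply $\Sigma_{21}(z)$ has dense range for each $z$; one really uses the Fr\'echet-space continuity.
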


%%%%%%%%%%%%%%%%%%%%%%%%%%%%%%%%%%%%
\begin{proof}[\bf Proof]
The identity
\[
M_S=M_{\Si_{11}}+M_{\Si_{12}}(I-M_\cE M_{\Si_{22}})^{-1}M_\cE M_{\Si_{21}}
\]
may not hold if we consider the multiplication operators as acting between the appropriate
$H^2$-spaces, since $I-M_\cE M_{\Si_{11}}$ may not be boundedly invertible, but the identity
does hold when the multiplication operators are viewed as operators between the appropriate
linear spaces of holomorphic functions on $\BD$, i.e.,
\[
\begin{array}{c}
M_S:\textup{Hol}_{\wtil{\De}_*}(\BD)\to\textup{Hol}_\cY(\BD),\quad
M_\cE:\textup{Hol}_{\wtil{\De}_*}(\BD)\to\textup{Hol}_{\wtil{\De}}(\BD),\\[.2cm]
\mat{cc}{M_{\Si_{11}}&M_{\Si_{12}}\\M_{\Si_{21}}&M_{\Si_{22}}}:
\mat{c}{\textup{Hol}_{\cU}(\BD)\\\textup{Hol}_{\wtil{\De}_*}(\BD)}\to
\mat{c}{\textup{Hol}_{\cY}(\BD)\\\textup{Hol}_{\wtil{\De}}(\BD)}.
\end{array}
\]
Note that  $I-M_\cE M_{\Si_{22}}$ is invertible as a linear
map on $\operatorname{Hol}_{\widetilde \De_{*}}({\mathbb D})$
since $\Si_{22}(0)=0$ and ${\mathcal E}(z)$ and $\Sigma_{22}(z)$ are
both contractive for $z \in {\mathbb D}$.
Now assume
$\cE,\cE'\in\cS(\wtil{\De},\wtil{\De}_*)$ so that
${\boldsymbol{\mathcal R}}_\Sigma[\cE]={\boldsymbol{\mathcal R}}_\Sigma[\cE']$. By the
assumptions on $M_{\Si_{12}}$ and $M_{\Si_{21}}$ it follows that
$$
(I-M_{\cE}M_{\Si_{22}})^{-1}M_{\cE}=(I-M_{\cE'}M_{\Si_{22}})^{-1}M_{\cE'}
=M_{\cE'}(I-M_{\Si_{22}}M_{\cE'})^{-1},
$$
and thus
\begin{align*}
M_{\cE}-M_{\cE}M_{\Si_{22}}M_{\cE'}=&M_{\cE}(I-M_{\Si_{22}}M_{\cE'})\\
=&(I-M_{\cE}M_{\Si_{22}})M_{\cE'}=M_{\cE'}-M_{\cE}M_{\Si_{22}}M_{\cE'}.
\end{align*}
Hence $\cE=\cE'$. Since $M_{\Si_{12}}:\textup{Hol}_{\wtil{\De}_*}(\BD)\to\textup{Hol}_\cY(\BD)$
has a trivial kernel, so does $M_{\Si_{12}}(I-M_{\cE}M_{\Si_{22}})^{-1}$ when viewed as
an operator acting on $\textup{Hol}_{\wtil{\De}_*}(\BD)$, independently of the choice of
$\cE\in\cS(\wtil{\De},\wtil{\De}_*)$. In particular, $G(z)h(z)\equiv 0$ implies $h=0$ for any
$h\in\cH(K_\cE)$.
\end{proof}

The proof of the above lemma does not take into account the particularities of the
Redheffer transform associated with the problem $\APS$ constructed in
\eqref{3.3}--\eqref{3.6}, besides the fact that ${\Si_{22}}(0)=0$. As we shall see,
for the coefficients in the Redheffer transform we consider, $M_{\Si_{21}}$ always has
dense range, while $M_{\Si_{12}}$ has a trivial kernel if the operator $T^*$ is injective.
As preparation for this result, we need the following lemma.

\begin{lem}\label{L:kers}
Let $D_{21}$ and $D_{12}$ be the operators in the unitary colligation \eqref{3.6}.
Then
\begin{equation}
\ker D_{21}^*=\{0\}\quad\mbox{and}\quad
\ker D_{12}=\ov{{\bf i}_*\mat{c}{\ker T^* P^{\half}|_{\cX_0}\\ \{0\}}}.
\label{8.11}
\end{equation}
\end{lem}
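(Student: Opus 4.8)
The plan is to extract explicit formulas for $D_{21}$ and $D_{12}$ from the block structure of the unitary colligation ${\bf U}$ in \eqref{AIPcol1}--\eqref{3.6}, and then read off the two kernel computations. Recall that ${\bf U}$ has the special $3\times 3$ block form \eqref{AIPcol1}, and the identification \eqref{3.6} is obtained by splitting the domain $\cX = \cX_0 \oplus (\text{something})$ — more precisely, the domain $\cD_V \subseteq \cX_0 \oplus \cU$ is regarded as sitting inside $\cX_0 \oplus \cU$, and similarly for the range. So $D_{21}$ is the compression of ${\mathbf i}$ (acting on the $\Delta$-component, after including $\cU$ into $\cX_0 \oplus \cU$ and projecting onto $\Delta$) to the $\cU$-to-$\wtil\Delta$ corner, and $D_{12}$ is the corresponding compression of ${\mathbf i}_*^*$ in the $\wtil\Delta_*$-to-$\cY$ corner. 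The cleanest way to phrase this: if $P_\Delta$ denotes the orthogonal projection of $\cX_0 \oplus \cU$ onto $\Delta$ and $\iota_\cU \colon \cU \to \cX_0\oplus\cU$ the inclusion $u \mapsto (0,u)$, then $D_{21} = {\mathbf i}\, P_\Delta\, \iota_\cU$; dually, if $P_{\Delta_*}$ is the projection of $\cX_0\oplus\cY$ onto $\Delta_*$ and $\pi_\cY \colon \cX_0\oplus\cY\to\cY$ the coordinate projection, then $D_{12} = \pi_\cY\, {\mathbf i}_*^*\, P_{\Delta_*}\,\iota_{\wtil\Delta_*}$ where $\iota_{\wtil\Delta_*}$ is the given inclusion of $\wtil\Delta_*$ (equivalently, via ${\mathbf i}_*$, of $\Delta_*$) — so that $D_{12} = \pi_\cY|_{\Delta_*}$ as a map $\Delta_* \to \cY$, once we use ${\mathbf i}_*$ to identify $\wtil\Delta_*$ with $\Delta_*$.

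For the first assertion, $\ker D_{21}^* = \{0\}$: since ${\mathbf i}$ is unitary, $\ker D_{21}^* = \ker (P_\Delta \iota_\cU)^*$, and a vector $\widetilde\delta$ (identified with $\delta \in \Delta$) lies in this kernel iff $\delta \perp \iota_\cU(\cU)$ inside $\cX_0\oplus\cU$, i.e. iff $\delta \in \cX_0 \oplus \{0\}$. But $\Delta = (\cX_0\oplus\cU) \ominus \cD_V$ and $\cD_V = \overline{\Ran}\sbm{P^{\half}\\ N} \supseteq \overline{\Ran}\sbm{P^{\half}\\ 0}$? — not quite; rather, a vector $(x_0,0)\in\Delta$ must be orthogonal to all $(P^\half x, Nx)$. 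If additionally $(x_0,0)\in\cX_0\oplus\{0\}$ with $x_0 \in \cX_0 = \overline{\Ran}\,P^\half$, orthogonality forces $\langle P^\half x, x_0\rangle = 0$ for all $x$, hence $x_0 = 0$. So $\ker D_{21}^* = \{0\}$.

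For the second assertion, by the above $D_{12}$ is $\pi_\cY$ restricted to $\Delta_* = (\cX_0\oplus\cY)\ominus\cR_V$ (then transported by ${\mathbf i}_*$), so $\ker D_{12} = \{\,\delta_* \in \Delta_* : \delta_* \in \cX_0 \oplus \{0\}\,\}$, and we must show this equals $\overline{\big(\ker(T^*P^\half|_{\cX_0}) \oplus \{0\}\big)}$ (transported by ${\mathbf i}_*$). An element $(x_0,0)$ with $x_0\in\cX_0$ lies in $\Delta_*$ iff it is orthogonal to $\cR_V = \overline{\Ran}\sbm{P^\half T\\ E}$, i.e. iff $\langle P^\half Tx, x_0\rangle_{\cX_0} + \langle Ex, 0\rangle = 0$ for all $x\in\cX$, i.e. $\langle Tx, P^\half x_0\rangle = 0$ for all $x$ — equivalently $P^\half x_0 \in \ker T^*$, i.e. $x_0 \in \ker(T^*P^\half|_{\cX_0})$. (The closure appears because $\cX_0 \oplus \{0\}$ need not be closed inside the ambient space in the way $\Delta_*$ is cut out, but intersecting the closed subspace $\Delta_*$ with the closed subspace $\cX_0\oplus\{0\}$ already gives a closed set; the closure in \eqref{8.11} is harmless and matches the stated form.) Applying ${\mathbf i}_*$ gives exactly the claimed formula for $\ker D_{12}$.

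The main obstacle I anticipate is purely bookkeeping: keeping straight which inclusions/identifications ($\cD_V \hookrightarrow \cX_0\oplus\cU$, $\cR_V\hookrightarrow\cX_0\oplus\cY$, ${\mathbf i}$, ${\mathbf i}_*$) are being suppressed in passing from \eqref{AIPcol1} to \eqref{3.6}, and therefore getting the precise formula $D_{12} = \pi_\cY|_{\Delta_*}$ (mod ${\mathbf i}_*$) and $D_{21} = {\mathbf i} \circ (\text{co-restriction of }P_\Delta\text{ to }\cU)$ with the adjoints on the correct side. Once those are pinned down, both computations are one-line orthogonality arguments against $\overline{\Ran}\sbm{P^\half\\ N}$ and $\overline{\Ran}\sbm{P^\half T\\ E}$ respectively, exactly as above.
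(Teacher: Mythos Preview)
Your proposal is correct and follows essentially the same route as the paper. Both arguments extract from the block structure \eqref{AIPcol1}--\eqref{3.6} that $D_{21}^{*}\widetilde\delta = 0$ forces ${\bf i}^{*}\widetilde\delta = (x_{0},0) \in \Delta$ (the paper does this by computing ${\bf U}^{*}\widetilde\delta$; you do it by first writing $D_{21} = {\bf i}\,P_{\Delta}\,\iota_{\cU}$), and then both use orthogonality to $\cD_{V}$ and injectivity of $P^{\half}|_{\cX_{0}}$ to conclude $x_{0}=0$; the $\ker D_{12}$ computation is handled symmetrically via the characterization \eqref{8.14} of $\Delta_{*}$, and your observation that the closure in \eqref{8.11} is redundant (since $\ker(T^{*}P^{\half}|_{\cX_{0}})$ is already closed and ${\bf i}_{*}$ is unitary) is correct.
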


\begin{proof} Let $\widetilde{\delta}\in\widetilde\Delta$ be such that
$D_{21}^*\widetilde{\delta}=0_{\cU}$. By construction \eqref{AIPcol1},
the vector
\begin{equation}
{\bf U}^*\widetilde\delta=\begin{bmatrix}C_2^*\widetilde\delta \\
D_{21}^*\widetilde{\delta} \\ 0\end{bmatrix}=\begin{bmatrix}x_0 \\ 0
\\ 0\end{bmatrix}\in\begin{bmatrix}\cX_0 \\ \cU \\ \widetilde{\Delta}_*
\end{bmatrix}
\label{8.12}
\end{equation}
belongs to $\Delta$, which means (by definition \eqref{3.4} of $\Delta$)
that
$$
0=\left\langle\begin{bmatrix}x_0 \\ 0 \\ 0\end{bmatrix},
\begin{bmatrix}P^\half x \\  Nx \\ 0\end{bmatrix}
\right\rangle_{\cX_0\oplus\cU\oplus\widetilde{\Delta}_*}=
\langle x_0, P^\half x\rangle_{\cX_0}
$$
for every $x\in\cX_0$, which is equivalent to $P^\half x_0=0$. Since
$P^\half|_{\cX_0}$ is injective, we get $x_0=0$. Thus
${\bf U}^*\widetilde\delta=0$ by \eqref{8.12} and consequently,
$\widetilde\delta=0$, since ${\bf U}$ is unitary. So ${\rm Ker}D_{21}^*=\{0\}$.

To prove the second equality in \eqref{8.11} we first observe that
a vector $\sbm{x_0 \\ y}\in\sbm{\cX_0\\ \cY}$ belongs to
$\Delta_*:=\sbm{\cX_0\\ \cY}\ominus{\mathcal R}_V$ if and only if
\begin{equation}
T^*P^\half x_0+E^*y=0.
\label{8.14}
\end{equation}
Now let $\widetilde{\delta}_*\in\widetilde\Delta_*$ so that $D_{12}\widetilde{\delta}_*=0_{\cY}$.
Then, by \eqref{AIPcol1} and \eqref{3.6},
\[
{\mathbf U}\wtil{\de}_*={\bf i}_*^{*}\wtil{\de}_*=\mat{c}{B_2\\D_{12}}\wtil{\de}_*
=\mat{c}{x_0\\0}\in\De_*\subset\mat{c}{\cX_0\\\cY},\quad\text{with}\ \ x_0=B_2\wtil{\de}_*.
\]
Since $\sbm{x_0\\0}$ is in $\De_*$, it follows from \eqref{8.14} that $T^*P^\half x_0=0$, i.e.,
$x_0\in\ker T^*P^\half$. Thus, since ${\bf U}$ is unitary and $\sbm{x_0\\0}\in\De_*$, we see that
$\wtil{\de}_*={\bf U}^*\sbm{x_0\\0}={\bf i}_*\sbm{x_0\\0}$ for some $x_0\in\ker T^*P^\half$.

Conversely, for every $x_0\in{\rm Ker}(T^*P^\half|_{\cX_0})$, the vector
$\sbm {x_0 \\ 0}$ belongs to $\Delta_*$ (by
\eqref{8.14}) so that its image $\widetilde\delta_*={\bf
i}_*\sbm{x_0 \\ 0}$ belongs to $\widetilde\Delta_*$
and we have, on account of \eqref{AIPcol1}-\eqref{3.6},
$$
\begin{bmatrix}B_2\\ D_{12}\end{bmatrix}\widetilde\delta_*=
\begin{bmatrix}B_2\\ D_{12}\end{bmatrix} {\bf i}_*\begin{bmatrix}x_0 \\
0\end{bmatrix}={\bf i}_*^*{\bf i}_*\begin{bmatrix}x_0 \\
0\end{bmatrix}=\begin{bmatrix}x_0 \\ 0\end{bmatrix}.
$$
Equating the bottom entries we get $D_{12}\widetilde\delta_*=0$ which
completes the proof of the characterization of ${\rm Ker}D_{12}$.
\end{proof}

%%%%%%%%%%%%%%%%%%%%%%%%%%%%%%%%%%%%%%%%%%%%%%%%%%%%%%%%%%%%%%%%%%%%%%%%
\begin{thm}\label{T:InjAndIso}
Let $\Red_\Si$ be the Redheffer transform associated with the Schur class
function $\Si$ defined in \eqref{3.7} from the ${\AP}$-admissible data set
\eqref{3.1}. Assume that $T^*$ is injective. Then
$\Red_\Si:\cS(\widetilde{\Delta},\widetilde{\Delta}_*)\to\cS(\cU,\cY)$ is injective,
and for $\cE\in\cS(\widetilde{\Delta},\widetilde{\Delta}_*)$ the multiplication
operator $M_G:\cH(K_\cE)\to\cH(K_S)$ has trivial kernel. If in addition
$P=M_{F^{S}}^{[*]}M_{F^S}$, then $M_G$ is an isometry.
\end{thm}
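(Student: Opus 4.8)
The plan is to reduce the statement to Lemma \ref{L:noker}. That lemma gives everything we want — injectivity of $\Red_\Si$, triviality of $\ker M_G$ on $\cH(K_\cE)$, and then (via Proposition \ref{P:ContrMult}, since $P=M_{F^{S}}^{[*]}M_{F^S}$ forces $M_\Ga$ to be an isometry and hence $\begin{bmatrix} M_G & M_\Ga\end{bmatrix}$ to be a coisometry with $M_G$ a partial isometry) that $M_G$ is actually an isometry. So the only real work is to verify, under the hypothesis that $T^*$ is injective, the two analytic conditions in the hypothesis of Lemma \ref{L:noker}: that $M_{\Si_{21}}:\textup{Hol}_{\cU}(\BD)\to\textup{Hol}_{\wtil{\De}}(\BD)$ has dense range, and that $M_{\Si_{12}}:\textup{Hol}_{\wtil{\De}_*}(\BD)\to\textup{Hol}_\cY(\BD)$ has trivial kernel.

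First I would handle $M_{\Si_{21}}$. Using the realization $\Si_{21}(z)=D_{21}+zC_2(I-zA)^{-1}B_1$ from \eqref{3.7} and the Taylor expansion, the range of $M_{\Si_{21}}$ acting on constants already contains $\Ran D_{21}$, so $\ov{\Ran M_{\Si_{21}}}\supseteq \ov{\Ran D_{21}} = (\ker D_{21}^*)^\perp$. By the first equality in Lemma \ref{L:kers}, $\ker D_{21}^*=\{0\}$, so $\ov{\Ran D_{21}}=\wtil\De$ and hence $M_{\Si_{21}}$ has dense range. (Note this part does not even use injectivity of $T^*$, consistent with the remark after Lemma \ref{L:noker}.)

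Next I would handle $M_{\Si_{12}}$, which is where the hypothesis on $T^*$ enters. I want to show $\Si_{12}(z)\de_* \equiv 0$ on $\D$ forces $\de_*=0$. Evaluating at $z=0$ gives $\Si_{12}(0)\de_* = D_{12}\de_* = 0$, so $\de_*\in\ker D_{12}$; by the second equality in Lemma \ref{L:kers}, $\ker D_{12}=\ov{{\bf i}_*\sbm{\ker(T^*P^{\half}|_{\cX_0})\\ \{0\}}}$. Now if $T^*$ is injective, then $\ker(T^*P^{\half}|_{\cX_0}) = \ker(P^{\half}|_{\cX_0}) = \{0\}$ (the last equality because $\cX_0=\ov{\Ran}P^{\half}$, on which $P^{\half}$ is injective). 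Hence $\ker D_{12}=\{0\}$, which already gives $\de_*=0$ from the single condition at $z=0$ — so in fact $M_{\Si_{12}}$ is injective even as a map on constants, a fortiori on $\textup{Hol}_{\wtil\De_*}(\BD)$. With both hypotheses of Lemma \ref{L:noker} verified, that lemma yields injectivity of $\Red_\Si$ and $\ker M_G=\{0\}$; combining $\ker M_G=\{0\}$ with the partial-isometry conclusion of Proposition \ref{P:ContrMult} under $P=M_{F^{S}}^{[*]}M_{F^S}$ gives that $M_G$ is an isometry, completing the proof.

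The argument is essentially bookkeeping once Lemmas \ref{L:noker} and \ref{L:kers} are in hand; the only point requiring a little care is the identification $\ker(T^*P^{\half}|_{\cX_0})=\{0\}$, i.e., making sure the injectivity of $T^*$ on all of $\cX$ really does kill $\ker(T^*P^{\half}|_{\cX_0})$ — this is immediate since $P^{\half}|_{\cX_0}$ is injective (its range is dense in $\cX_0$ and it is self-adjoint as an operator on $\cX_0$), so no vector in its range other than $0$ can lie in $\ker T^*$. I do not anticipate a genuine obstacle here; the slightly delicate bit is just tracking the abuse of notation around viewing $P^{\half}$ as a map $\cX\to\cX_0$ versus $\cX_0\to\cX_0$, as flagged in the paragraph before \eqref{3.3}.
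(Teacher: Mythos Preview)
Your approach matches the paper's exactly: reduce to Lemma \ref{L:noker} via Lemma \ref{L:kers} (which gives $\ker D_{21}^*=\{0\}$ unconditionally and $\ker D_{12}=\{0\}$ when $T^*$ is injective, since then $\ker(T^*P^{\half}|_{\cX_0})=\ker(P^{\half}|_{\cX_0})=\{0\}$), and then invoke Proposition \ref{P:ContrMult} for the isometry statement. One phrasing quibble: the step from $\ker D_{12}=\{0\}$ to injectivity of $M_{\Si_{12}}$ on $\textup{Hol}_{\wtil\De_*}(\D)$ is not ``a fortiori'' from injectivity on constants (that implication runs the wrong way)---the correct observation is that $\Si_{12}(0)$ injective lets you kill the lowest nonzero Taylor coefficient of any $h$ with $\Si_{12}h\equiv 0$ and iterate---but the paper likewise treats both this and the dense-range step for $M_{\Si_{21}}$ as routine.
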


\begin{proof}
It is easy to see that $M_{\Si_{21}}$ has dense range if and only if $\Si_{21}(0)=D_{21}$
has dense range and that $\ker \Si_{12}(0)=\ker D_{12}=\{0\}$ implies that $M_{\Si_{12}}$
has a trivial kernel. The converse of the latter statement is not true in general.
Moreover, the last statement of the theorem is immediate from
Proposition \ref{P:ContrMult}.
Thus Theorem \ref{T:InjAndIso} follows immediately from Lemma
\ref{L:kers}.
\end{proof}

\begin{rem}
{\rm
In case the operator $(I-\omega T)^{-1}$ is bounded for some
$\omega\in\D$, we can define
\begin{align*}
\widetilde{E}&=\sqrt{1-|\omega|^2}\cdot E(I-\omega T)^{-1},
&\widetilde{N}&=\sqrt{1-|\omega|^2}\cdot N(I-\omega T)^{-1}\\
\widetilde{T}&=(\overline{\omega}I-T)(I-\omega T)^{-1},
&\widetilde{S}(z)&=S\left(\frac{z-\omega}{1-z\overline{\omega}}\right).
\end{align*}
It is not hard to verify that if ${\mathcal D}=\{S, T, E, N, {\bf
x}\}$ is an $\AP$-admissible data set, then
the set $\widetilde{\mathcal D}=\{\widetilde{S}, \widetilde{T},
\widetilde{E}, \widetilde{N}, {\bf x}\}$
is also $\AP$-admissible and moreover, a function $f$ solves the problem
$\AP$ if and only $\widetilde{f}(z):=
f(\frac{z-\omega}{1-z\overline{\omega}})$ solves the problem ${\bf AIP}_{\cH(K_{\widetilde S})}$
with data set $\widetilde{\mathcal D}$. Therefore,
up to a suitable conformal change of variable, we get all the conclusions
in Theorem \ref{T:InjAndIso} under the assumption that $(I-\omega
T)^{-1}\in\cL(X)$ and $(T^*-\omega I)$ is injective for some
$\omega\in\D$.}
\label{R:4.4}
\end{rem}

In case $T^*$ is not injective, and neither is $(T^*-\omega I)$ for some $\omega\in\D$
so that $(I-\omega T)$ is invertible in $\cL(\cX)$, it may still be possible to reach
the conclusion of Theorem \ref{T:InjAndIso} under weaker assumptions on the operator $T$.
We start with a preliminary result.

\begin{lem}\label{L:2.3}
The operator $M_{\Sigma_{12}}: \, {\rm
Hol}_{\widetilde{\Delta}_*}(\D)\to {\rm
Hol}_{\cY}(\D)$ is not injective if and only if there is a
sequence $\{g_n\}_{n \ge 1}$ of non-zero vectors in $\cX_0$ such that
\begin{equation}
g_1\in{\rm Ker}(T^*P^\half), \quad P^\half g_n=T^*P^\half g_{n+1} \; \; (n\ge 1),
\quad \limsup_{n\to\infty}\|B_2^*g_{n+1}\|^{\frac{1}{n}}\le 1.
\label{8.20}
\end{equation}
\end{lem}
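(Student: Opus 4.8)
The plan is to turn the multiplier equation $\Si_{12}(z)h(z)\equiv0$ into a recursion on the state space $\cX_0$ of the unitary colligation ${\bf U}$, and to play the two block decompositions \eqref{AIPcol1} and \eqref{3.6} of ${\bf U}$ against each other. First I would record that $M_{\Si_{12}}$ is non-injective exactly when there is a nonzero $h\in\textup{Hol}_{\wtil{\De}_*}(\D)$ with $\Si_{12}(z)h(z)\equiv0$, and, dividing $h$ by a suitable power of $z$, that I may assume $h(0)\ne0$. Writing $h(z)=\sum_{n\ge0}h_nz^n$ and using the realization $\Si_{12}(z)=D_{12}+zC_1(I-zA)^{-1}B_2$ from \eqref{3.7}, I would define $u_{-1}=0$ and $u_n=Au_{n-1}+B_2h_n\in\cX_0$, and check by matching Taylor coefficients that $\Si_{12}h\equiv0$ holds if and only if
\[
u_{-1}=0,\qquad u_n=Au_{n-1}+B_2h_n,\qquad C_1u_{n-1}+D_{12}h_n=0\qquad(n\ge0).
\]
Since the bottom entry of ${\bf U}\sbm{u_{n-1}\\0\\h_n}$ is automatically $C_2u_{n-1}$, this is precisely the statement that ${\bf U}\sbm{u_{n-1}\\0\\h_n}=\sbm{u_n\\0\\C_2u_{n-1}}$ for every $n\ge0$, with the zeros lying in $\cU$ on the left and in $\cY$ on the right.

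Next I would exploit that ${\bf U}$ is unitary. Applying ${\bf U}^*={\bf U}^{-1}$ to the displayed vector identity and reading off the $\wtil{\De}_*$-component, using that the $(3,3)$-entry of ${\bf U}^*$ vanishes, gives $h_n=B_2^*u_n$. Pairing the same identity with $\sbm{P^\half Tx\\Ex\\0}={\bf U}\sbm{P^\half x\\Nx\\0}$ (valid for all $x\in\cX$ by \eqref{3.3}) and using ${\bf U}^*{\bf U}=I$ gives $\langle T^*P^\half u_n,x\rangle=\langle P^\half u_{n-1},x\rangle$ for all $x$, hence $T^*P^\half u_n=P^\half u_{n-1}$ for $n\ge0$; in particular $T^*P^\half u_0=0$. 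Setting $g_n:=u_{n-1}$ for $n\ge1$ then gives the first two conditions of \eqref{8.20}, while the third follows from $\|B_2^*g_{n+1}\|=\|h_n\|$ and holomorphy of $h$ on $\D$. The $g_n$ are nonzero: $h_0=B_2^*u_0=h(0)\ne0$ forces $u_0\ne0$, and if $u_n=0$ for a least $n\ge1$ then $P^\half u_{n-1}=T^*P^\half u_n=0$, so $u_{n-1}=0$ by injectivity of $P^\half|_{\cX_0}$, a contradiction. This proves the ``only if'' implication.

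For the converse I would run the computation backwards. Given $\{g_n\}_{n\ge1}$ as in \eqref{8.20}, put $u_{n-1}:=g_n$, $h_n:=B_2^*u_n$ and $h(z):=\sum_{n\ge0}h_nz^n$, which is holomorphic on $\D$ by the growth hypothesis. Comparing \eqref{AIPcol1} with \eqref{3.6} I would establish the projection formulas $\sbm{C_2^*C_2u\\D_{21}^*C_2u}=P_\De\sbm{u\\0}$ and $\sbm{A^*u\\B_1^*u}=V^*P_{\cR_V}\sbm{u\\0}$, where $P_\De$ and $P_{\cR_V}$ are the orthogonal projections of $\cX_0\oplus\cU$ onto $\De$ and of $\cX_0\oplus\cY$ onto $\cR_V$. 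With these, the identity ${\bf U}^*\sbm{u_n\\0\\C_2u_{n-1}}=\sbm{u_{n-1}\\0\\h_n}$ becomes equivalent to $P_{\cR_V}\sbm{u_n\\0}=VP_{\cD_V}\sbm{u_{n-1}\\0}$, which I would prove by testing both sides (both of which lie in $\cR_V$) against the dense family $\{\sbm{P^\half Tx\\Ex}:x\in\cX\}$ and simplifying via \eqref{3.3} and the hypothesis $T^*P^\half u_n=P^\half u_{n-1}$ (with $u_{-1}:=0$). Hence ${\bf U}\sbm{u_{n-1}\\0\\h_n}=\sbm{u_n\\0\\C_2u_{n-1}}$, which recovers $u_n=Au_{n-1}+B_2h_n$ and $C_1u_{n-1}+D_{12}h_n=0$, that is, $\Si_{12}(z)h(z)\equiv0$. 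Finally $h\ne0$: since $g_1\in\ker(T^*P^\half)$ the vector $\sbm{g_1\\0}$ lies in $\De_*$, so $h_0=B_2^*g_1={\bf i}_*\sbm{g_1\\0}\ne0$ because $g_1\ne0$ and ${\bf i}_*$ is unitary. Therefore $M_{\Si_{12}}$ is not injective.

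The step I expect to be the main obstacle is this converse bookkeeping: passing correctly between ${\bf U}=\sbm{V&0&0\\0&0&{\bf i}_*^*\\0&{\bf i}&0}$ and ${\bf U}=\sbm{A&B_1&B_2\\C_1&D_{11}&D_{12}\\C_2&D_{21}&0}$, keeping straight which of $\cX_0,\cU,\cY$ each zero entry belongs to, and recognizing that all the operator identities needed collapse, after testing against $\Ran\sbm{P^\half T\\E}$, to the single relation $P_{\cR_V}\sbm{u_n\\0}=VP_{\cD_V}\sbm{u_{n-1}\\0}$.
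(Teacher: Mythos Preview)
Your proof is correct and follows essentially the same strategy as the paper: translate $\Sigma_{12}h\equiv0$ into the state-space recursion $u_n=Au_{n-1}+B_2h_n$, $C_1u_{n-1}+D_{12}h_n=0$ (your $u_{n-1}$ is precisely the paper's $g_n=\sum_{k=0}^{n-1}A^{n-k-1}B_2h_k$), and then exploit unitarity of ${\bf U}$ together with \eqref{3.3} to obtain $T^*P^{\half}u_n=P^{\half}u_{n-1}$ and $h_n=B_2^*u_n$. The only difference is cosmetic: the paper extracts the scalar identities $B_2^*A=-D_{12}^*C_1$ and $T^*P^{\half}A+E^*C_1=P^{\half}$ and computes with them directly, whereas you package the same information in the single colligation identity ${\bf U}\sbm{u_{n-1}\\0\\h_n}=\sbm{u_n\\0\\C_2u_{n-1}}$ and the projection equation $P_{\cR_V}\sbm{u_n\\0}=VP_{\cD_V}\sbm{u_{n-1}\\0}$, which is arguably cleaner bookkeeping but the same argument.
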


\begin{proof}
Let $h(z)={\displaystyle\sum_{k=0}^\infty z^k h_k}\in\tu{Hol}_{\wtil{\De}_*}(\D)$,
i.e.,
${\displaystyle\limsup_{k\to\infty}\|h_k\|^{\frac{1}{k}}}\leq 1$ and for $n\geq 0$,
\begin{equation}\label{hn}
h_n=i_*\mat{c}{x_n\\y_n}\text{ with }\mat{c}{x_n\\y_n}\in\mat{c}{\cX_0\\\cY},\text{ subject to }
T^*P^\half x_n+E^*y_n=0.
\end{equation}
Note that $x_n$ and $y_n$ are retrieved from $h_n$ by the identity
\begin{equation}\label{reviden}
\mat{c}{x_n\\y_n}={\mathbf i}_*^*h_n=\mat{c}{B_2\\D_{12}}h_n.
\end{equation}
Define $g_n\in\cX_0$ by
\begin{equation}\label{gns}
g_n=\sum_{k=0}^{n-1}A^{n-k-1}x_k\in\cX_0\quad\text{for}\quad n\geq 1,
\end{equation}
or equivalently via the recursion
\begin{equation}\label{gnsrec}
g_1=x_0,\quad g_{n+1}=x_n+Ag_n\quad\text{for}\quad n\geq 1.
\end{equation}
Since ${\mathbf U}$ in \eqref{AIPcol1} and \eqref{3.6} is unitary, it
follows that
\begin{equation}  \label{unirel1}
    B_{2}^{*}A = -D_{12}^{*} C_{1}.
\end{equation}
Moreover, since ${\mathbf U}$ is connected with $V$ as in
\eqref{AIPcol1} and $V$ is given by \eqref{3.3}, we see that
\begin{align*}
    \left\langle \left[ \begin{smallmatrix}  A \\ C_{1} \\ C_{2}
\end{smallmatrix} \right] x_{0}, \, \left[ \begin{smallmatrix}
P^{\half}T
\\ E \\ 0 \end{smallmatrix} \right] x_{0} \right\rangle  & =
\left\langle {\mathbf U} \left[ \begin{smallmatrix} x_{0} \\ 0 \\ 0
\end{smallmatrix} \right], \, {\mathbf U} \left[ \begin{smallmatrix}
P^{\half}x_{0} \\ N x_{0} \\ 0 \end{smallmatrix} \right] \right\rangle  \\
&  \quad = \left\langle \left[ \begin{smallmatrix} x_{0} \\ 0 \\ 0
\end{smallmatrix} \right], \, \left[ \begin{smallmatrix} P^{\half} x_{0}
\\ N x_{0} \\ 0 \end{smallmatrix} \right] \right\rangle = \langle
P^{\half} x_{0}, x_{0} \rangle
\end{align*}
from which we conclude that
\begin{equation}   \label{unirel2}
    T^{*} P^{\half}A + E^{*}C_{1} = P^{\half}.
\end{equation}
Hence for $n\geq 1$,
\begin{align}
T^*P^\half g_{n+1}=\sum_{k=0}^n T^*P^\half A^{n-k}x_k
&=T^*P^\half x_n+\sum_{k=0}^{n-1} T^*P^\half A^{n-k}x_k\notag\\
&=T^*P^\half x_n+\sum_{k=0}^{n-1}(P^\half-E^*C_1) A^{n-k-1}x_k\notag\\
&=T^*P^\half x_n-E^*C_1g_n+P^\half g_n\notag\\
&=-E^*(y_n+C_1g_n)+ P^\half g_n \label{com1}
\end{align}
where we used the relation between $x_n$ and $y_n$ in \eqref{hn} for
the last step.
Moreover, using the identity in \eqref{unirel1}, we get
\begin{align}
B_2^*g_{n+1}=B_2^{*}(x_n+Ag_n)
&=\mat{cc}{B_2^*&D_{12}^*}\mat{c}{x_n\\-C_1g_n}\notag\\
&=\mat{cc}{B_2^*&D_{12}^*}\mat{c}{x_n\\y_n}-D_{12}^*(y_n+C_1g_n)\notag\\
&={\mathbf i}_*\mat{c}{x_n\\y_n}-D_{12}^*(y_n+C_1g_n)\notag\\
&=h_{n} - D_{12}^*(y_n+C_1g_n).\label{com2}
\end{align}

Now assume that $h\not=0$ and $\Si_{12}(z)h(z)\equiv 0$, i.e.,
\begin{equation}\label{Si12h=0}
(D_{12}+zC_1(I-zA)^{-1}B_2)h(z)\equiv0.
\end{equation}
Using the power series representations
$$
h(z) = \sum_{n=0}^{\infty} h_{n} z^{n}\quad\mbox{and}\quad
\Si_{12}(z)=D_{12}+{\displaystyle\sum_{k=1}^\infty
z^kC_1A^{k-1}B_2}
$$
for $H$ and $\Sigma_{12}$ and recalling \eqref{reviden},
it follows that
\eqref{Si12h=0} is equivalent to the following system
of equations:
\begin{equation}\label{taylorform}
y_0=D_{12}h_0=0,\quad
y_n+C_1g_n=D_{12}h_n+\sum_{k=0}^{n-1}C_1A^{n-k-1}B_2h_k=0 \text{ for
} n \ge 1.
\end{equation}
Without loss of generality we may, and will, assume that $h_0\not=0$; otherwise replace $h$
by $\wtil{h}(z)=z^{-\ell}h(z)$ for $\ell\in\Z_+$ sufficiently large.
Then $\left[ \begin{smallmatrix} x_{0} \\ y_{0} \end{smallmatrix}
\right] = {\mathbf i}_{*}^{*} h_{0} \ne 0$ since $h_{0} \ne 0$ by assumption.
But $y_{0} = 0$ by \eqref{taylorform} and hence $x_{0} \ne 0$.  From
the constraint in \eqref{hn} we see that $0 \ne x_{0} \in
\operatorname{Ker} T^{*}P^{\half}$.  Moreover, the second identity in
\eqref{taylorform} combined with \eqref{com1} and \eqref{com2} yields
\begin{equation}  \label{id'}
    T^{*}P^{\half} g_{n+1} = P^{\half} g_{n}, \quad B_{2}^{*} g_{n+1} =
    h_{n}.
\end{equation}
The second of identities \eqref{id'} then gives us
\begin{equation}   \label{limsups}
\limsup_{n \to \infty} \| B_{2}^{*} g_{n+1} \|^{1/n} = \limsup_{n \to
\infty} \|h_{n}\|^{1/n} \le 1.
\end{equation}
Finally, observe that, since $g_{1} = x_{0} \ne 0$ and
$\operatorname{Ker} P^{\half}|_{\cX_{0}} = \{0\}$, the recursive
relation $T^{*}P^{\half} g_{n+1} = P^{\half}g_{n}$ (the first of
identities \eqref{id'}) implies that $g_{n} \ne 0$ for all $n \ge
1$.  We conclude that the sequence $\{g_{n}\}_{n \ge 1}$ has all the
desired properties.

Conversely, assume $\{g_n\}_{n\ge 1}$ is a sequence in $\cX_0$ satisfying \eqref{8.20}.
Define
\[
x_0=g_1,\quad x_n=g_{n+1}-Ag_n,\qquad y_0=0,\quad
y_n=-C_1g_n\quad\text{for }n\geq 1.
\]
Applying \eqref{com1} to $g_n$ for $n\geq 1$ and using \eqref{unirel2}, we find that
\begin{align*}
T^*P^\half g_{n+1}=P^\half g_n=&T^*P^\half Ag_n+E^*C_1 g_n\\
=&T^*P^\half  g_{n+1}-T^*P^\half x_n+E^*C_1g_n.
\end{align*}
We conclude that $T^*P^\half x_n+E^*y_n=T^*P^\half x_n-E^*C_1g_n=0$. For $n=0$ the
identity $T^*P^\half x_n+E^*y_n=0$ follows from the first of
conditions \eqref{8.20}. Hence we obtain that $\sbm{x_n\\y_n}\in\De_*$.

Now define $h_n=i_*\sbm{x_n\\y_n}\in\wtil{\De}_*$, and
$h(z)={\displaystyle\sum_{k=0}^\infty z^k h_k}$.
As before, $x_n$ and $y_n$ are  retrieved from $h_n$ by \eqref{reviden}, and from the
definition of $x_n$ it follows that the sequence $\{g_n\}_{n\ge 1}$ is
retrieved by \eqref{gnsrec}. Moreover, the definition of $y_n$ shows that
\eqref{taylorform}
holds and, in combination with the computation \eqref{com2}, that $B_2^* g_{n+1}=h_n$.
The latter implies that
${\displaystyle\limsup_{k\to\infty}\|h_k\|^{\frac{1}{k}}}\leq1$, via \eqref{8.20}
and the identities in \eqref{limsups}. In particular, $h\in\tu{Hol}_{\wtil{\De}_*}(\D)$.
The fact that \eqref{taylorform} holds now is equivalent to $\Si_{12}(z)h(z)\equiv0$. Note
that $x_0=g_1\not=0$ and hence $h_{0} = {\mathbf i}_{*} \left[ \begin{smallmatrix}
x_{0} \\ y_{0} \end{smallmatrix} \right] \ne 0$. Thus $h\not=0$ and it follows that
$M_{\Sigma_{12}}: \, {\rm Hol}_{\widetilde{\Delta}_*}(\D)\to {\rm Hol}_{\cY}(\D)$ is not
injective.
\end{proof}

Based on the previous result, we obtain the following relaxation of the the condition
on $T$ in Theorem \ref{T:InjAndIso}.

\begin{thm}  Let $\cD' = \{P,T,E,N\}$ be an $\APS$-admissible data set, let
$\Sigma$ be constructed as in \eqref{3.7} and let us assume that
$T$ meets the condition
\begin{equation}
\left(\bigcap_{k\ge 1}{\rm Ran} (T^*)^k\right)\bigcap{\rm Ker}T^*=\{0\}.
\label{8.30}
\end{equation}
Then the operator $M_{\Sigma_{12}}: \,
\operatorname{Hol}_{\widetilde{\Delta}_*}(\D)\to {\rm Hol}_{\cY}(\D)$ is injective.
\label{L:8.4}
\end{thm}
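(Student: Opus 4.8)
The plan is to prove the statement directly in its contrapositive form: assume $M_{\Sigma_{12}}$ is \emph{not} injective and derive a contradiction with hypothesis \eqref{8.30}. By Lemma \ref{L:2.3}, the failure of injectivity yields a sequence $\{g_n\}_{n\ge 1}$ of \emph{non-zero} vectors in $\cX_0$ satisfying the three conditions in \eqref{8.20}. The first step is to transport the recursion to the vectors $w_n := P^{\half}g_n$. Since $P^{\half}|_{\cX_0}$ is injective, each $w_n$ is again non-zero, and the middle relation $P^{\half}g_n = T^*P^{\half}g_{n+1}$ of \eqref{8.20} becomes the clean dynamical identity $w_n = T^*w_{n+1}$ for all $n\ge 1$.

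The second step is to read off from this relation, together with the first condition of \eqref{8.20}, that $w_1$ would violate \eqref{8.30}. Iterating $w_n = T^*w_{n+1}$ gives $w_1 = (T^*)^k w_{k+1}$ for every $k\ge 1$, so $w_1 \in \bigcap_{k\ge 1}{\rm Ran}(T^*)^k$. On the other hand, the condition $g_1\in\ker(T^*P^{\half})$ says exactly that $T^*w_1 = 0$, i.e.\ $w_1\in\ker T^*$. Hence $w_1$ lies in the intersection in \eqref{8.30}, so \eqref{8.30} forces $w_1 = 0$; but $w_1 = P^{\half}g_1 \ne 0$ because $g_1\ne 0$ and $P^{\half}|_{\cX_0}$ is injective. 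This contradiction shows no such sequence exists, and therefore $M_{\Sigma_{12}}$ is injective.

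I do not anticipate a genuine obstacle: the substantive work has already been packaged into Lemma \ref{L:2.3}, and what remains is a two-line argument about the backward orbit of $w_1$ under $T^*$. The only points that need a little care are (i) keeping track that $P^{\half}$ is being viewed as an operator on $\cX_0$, in accordance with the convention adopted just before \eqref{3.3}, so that the $w_n$ are indeed non-zero, and (ii) observing that the $\limsup$ growth condition in \eqref{8.20} plays no role in this implication — hypothesis \eqref{8.30} already excludes the purely algebraic pattern $g_1\in\ker(T^*P^{\half})$, $P^{\half}g_n = T^*P^{\half}g_{n+1}$ supported on a non-trivial sequence, so only the algebraic part of the characterization in Lemma \ref{L:2.3} is invoked.
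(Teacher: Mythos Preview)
Your proof is correct and follows essentially the same approach as the paper: both argue by contraposition, invoke Lemma \ref{L:2.3} to obtain the sequence $\{g_n\}$, and then observe that $P^{\half}g_1$ is a non-zero vector lying simultaneously in $\ker T^*$ and in $\operatorname{Ran}(T^*)^k$ for every $k\ge 1$, contradicting \eqref{8.30}. Your formulation via $w_n = P^{\half}g_n$ and the explicit statement that $w_1 \in \bigcap_{k\ge 1}\operatorname{Ran}(T^*)^k$ is in fact slightly cleaner than the paper's wording, which phrases the contradiction as ``$\operatorname{Ran}(T^*)^n \cap \ker T^* \ne \{0\}$ for each $n$'' without quite saying that a single witness works for all $n$ simultaneously.
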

\begin{proof}
Assume $M_{\Si_{12}}: \, {\rm Hol}_{\widetilde{\Delta}_*}(\D)\to {\rm
Hol}_{\cY}(\D)$ is not injective. By Lemma \ref{L:2.3},
there exists a nonzero sequence $\{g_n\}_{n\geq 1}$ in $\cX_0$
satisfying \eqref{8.20}. By the first relation in \eqref{8.20},
$P^{\half}g_1\in\ker T^*$. On the other hand, iterating the second condition in \eqref{8.20}
gives $P^{\half}g_1=(T^*)^nP^\half g_{n+1}$ for each $n\geq 1$. Since $P^{\half}g_1\neq 0$,
it follows that $\tu{Ran} (T^*)^n  \cap \ker T^*\not=\{0\}$ for each $n\geq 1$.
The latter is in contradiction with \eqref{8.30}. Thus $M_{\Si_{12}}: \, {\rm
Hol}_{\widetilde{\Delta}_*}(\D)\to {\rm Hol}_{\cY}(\D)$ is injective.
\end{proof}
It is easy to see that not only the injectivity of $M_{\Si_{12}}$, but all the
conclusions of Theorem \ref{T:InjAndIso} hold with the condition that $T^*$ is injective
replaced by the weaker condition \eqref{8.30}. Although condition \eqref{8.30} is far from being
necessary, it guarantees injectivity of $M_{\Sigma_{12}}$ for important particular cases:
\begin{enumerate}
    \item $T^{*}$ is injective (so $\operatorname{Ker} T^{*} =
    \{0\}$),
    \item $T^{*}$ is nilpotent (so $\cap_{k\ge 1} \operatorname{Ran} (T^{*})^{k} =
    \{0\}$), and
    \item $\dim \cX < \infty$, or, more generally e.g., $T = \lambda
    I + K$ with $0 \ne \lambda \in {\mathbb C}$ and $K$ compact (so
    $\cX = \operatorname{Ran} (T^{*})^{p} \dot + \operatorname{Ker}
    (T^{*})^{p}$ once $p$ is sufficiently large).
\end{enumerate}
The question of finding a condition that is both necessary and sufficient for
injectivity of $M_{\Sigma_{12}}$ remains open.

\section{Description of all solutions of the problem $\AP$}\label{S:Param}

We now present the parametrization of the solution set to the problem $\AP$.
The proof relies on Theorem \ref{T:2.1}, Theorem \ref{T:3.1} and Proposition \ref{P:ContrMult}.
By Theorem \ref{T:2.1}, the solution set to the problem $\AP$ coincides with the set of all
functions $f:\D\to\cY$ such that the kernel ${\bf K}(z,\zeta)$ defined in \eqref{2.8} is positive on
$\D\times \D$. In particular, the function $S$ must be such that the kernel \eqref{3.2} is positive
meaning that $S$ must be a solution to the associated problem $\APS$. By Theorem \ref{T:3.1},
there exists a Schur-class function $\cE$ such that $S={\boldsymbol{\mathcal R}}_\Sigma[\cE]$
where ${\boldsymbol{\mathcal R}}_\Sigma$ is the Redheffer transform constructed in
\eqref{3.3}--\eqref{3.9}. Define $G$ and $\Gamma$ as in \eqref{GGa} and let $\wtil{\bf x}$ be the
unique vector in $\cX_0$ so that ${\bf x}=P^\half\wtil{\bf x}$. Making use of equalities
\eqref{3.14'} and \eqref{3.14b} we can write ${\bf K}(z,\zeta)$ in the form
$$
{\bf K}(z,\zeta)=
\begin{bmatrix} 1 & \widetilde{\bf x}^*P^\half & f(\zeta)^*\\
P^\half\widetilde{\bf x} & P & P^\half\Gamma(\zeta)^{*} \\ f(z) &
\Gamma(z)P^\half & G(z)K_\cE(z,\zeta)G(\zeta)^*+\Gamma(z)\Gamma(\zeta)^*
\end{bmatrix}.
$$
The positivity of the latter kernel is equivalent to positivity of the Schur complement of $P$
with respect to ${\bf K}(z,\zeta)$, that is, to the condition
\begin{equation}
\begin{bmatrix}1-\|\widetilde{\bf x}\|^2 & f(\zeta)^*-\widetilde{\bf
x}^*\Gamma(\zeta)^{*} \\
f(z)-\Gamma(z)\widetilde{\bf x} &  G(z)K_\cE(z,\zeta)G(\zeta)^*
\end{bmatrix}\succeq 0\qquad (z,\zeta\in\D).
\label{3.17}
\end{equation}
We arrive at the following result.
\begin{thm}\label{T:AIPsol}
Let $\{S, T, E, N, {\bf x}\}$ be an $\AP$-admissible data set and let us assume that
$P:=M_{F^S}^{[*]}M_{F^S}\ge {\bf x}{\bf x}^*$ with $F^S$ as in \eqref{1.17}.
Let $\Sigma$ be constructed as in \eqref{3.3}--\eqref{RedCoefs2}, let
$\cE$ be a Schur-class function such that $S={\boldsymbol{\mathcal
R}}_\Sigma[\cE]$, let $G$ and $\Ga$ be defined as in \eqref{GGa} and let
$\wtil{\bf x}$ be the unique vector in $\cX_0$ so that ${\bf x}=P^\half\wtil{\bf
x}$. Then:
\begin{enumerate}
\item The set of solutions $f$ of the problem $\AP$ is given by the formula
\begin{equation}\label{sols}
f(z)=\Ga(z)\wtil{\bf x}+G(z)h(z)
\end{equation}
with parameter $h$ in $\cH(K_\cE)$ subject to $\|h\|_{\cH(K_S)}\leq \sqrt{1-\|\wtil{\bf
x}\|^2}$.
\item For $f$ defined by \eqref{sols}
\begin{equation}\label{OrthoRelation}
\|f\|^2_{\cH(K_S)}=\|M_\Ga\wtil{\bf x}\|^2+\|M_Gh\|^2
=\|\wtil{\bf x}\|^2+\|P_{\cH(K_\cE)\ominus\ker M_G}h\|^2
\end{equation}
and hence $f_\textup{min}(z)=\Ga(z)\wtil{\bf x}$ is the unique minimal-norm solution.
\item The problem $\AP$ admits a unique solution if and only if $\|\wtil{\bx}\|=1$ or
$\ov{\tu{Ran}}\,M_{F^S}=\cH(K_S)$.
\end{enumerate}
\end{thm}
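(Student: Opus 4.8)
The plan is to deduce Theorem \ref{T:AIPsol} from the positivity of the kernel in \eqref{3.17}, which (as explained in the text preceding the theorem) is equivalent to $f$ being a solution of $\AP$. First I would unwind \eqref{3.17}: the $(2,2)$-block $G(z)K_\cE(z,\zeta)G(\zeta)^*$ is the reproducing kernel for the range of the multiplier $M_G\colon\cH(K_\cE)\to\cH(K_S)$ from Proposition \ref{P:ContrMult}, and the $(1,1)$-entry is the scalar $1-\|\wtil{\bf x}\|^2$. By the Schur-complement / Kolmogorov-decomposition argument already used in the proof of Theorem \ref{T:2.1} (passing from positivity of a $2\times 2$ operator kernel to positivity of an operator matrix on $\C\oplus\cH(K_S)$), positivity of \eqref{3.17} is equivalent to the existence of $h\in\cH(K_\cE)$ with $\|h\|_{\cH(K_\cE)}^2\le 1-\|\wtil{\bf x}\|^2$ and $f(z)-\Gamma(z)\wtil{\bf x}=G(z)h(z)$, i.e., $f-M_\Gamma\wtil{\bf x}=M_G h$. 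Since $\Gamma(z)P^\half=F^S(z)$ by \eqref{3.14'}, we have $\Gamma(z)\wtil{\bf x}=F^S(z)\wtil{\bf x}\cdot$— more precisely $M_\Gamma\wtil{\bf x}$ lies in $\cH(K_S)$ — and this gives the parametrization \eqref{sols} of part (1). I should be a little careful to state the norm bound with the correct norm ($\cH(K_\cE)$ versus $\cH(K_S)$): since $M_G$ is a contraction, $\|M_Gh\|_{\cH(K_S)}\le\|h\|_{\cH(K_\cE)}$, and the precise constraint coming out of the Schur complement is $\|h\|_{\cH(K_\cE)}\le\sqrt{1-\|\wtil{\bf x}\|^2}$.

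For part (2), I would invoke the coisometry property of $\begin{bmatrix}M_G & M_\Gamma\end{bmatrix}\colon\cH(K_\cE)\oplus\cX_0\to\cH(K_S)$ from Proposition \ref{P:ContrMult}. A coisometry maps the orthogonal complement of its kernel isometrically onto the target, and here the relevant decomposition is of the specific vector $\begin{bmatrix}h\\\wtil{\bf x}\end{bmatrix}$. Rather than a full structure theorem, the cleanest route is: $M_\Gamma$ is a contraction (indeed $M_\Gamma^* M_\Gamma = M_{F^S}^{[*]}M_{F^S}P^{-1}$-type identity is not needed) — actually, what is needed is that $M_\Gamma$ restricted appropriately is isometric and its range is orthogonal to $\mathrm{Ran}\,M_G\cap(\cdot)$; this is exactly what coisometry of $\begin{bmatrix}M_G&M_\Gamma\end{bmatrix}$ gives via $M_GM_G^*+M_\Gamma M_\Gamma^*=I$. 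From this one reads off $\langle M_Gh, M_\Gamma\wtil{\bf x}\rangle$: writing $\begin{bmatrix}M_G&M_\Gamma\end{bmatrix}$ coisometric means $\begin{bmatrix}M_G&M_\Gamma\end{bmatrix}^*$ is isometric, so $\|M_Gh\|^2+\|M_\Gamma\wtil{\bf x}\|^2$ is NOT automatically $\|M_Gh+M_\Gamma\wtil{\bf x}\|^2$ — the cross term must be handled. Here is where I would use that $M_\Gamma$ is actually an isometry on $\cX_0$ whenever $P=M_{F^S}^{[*]}M_{F^S}$ (the hypothesis of the theorem, via Proposition \ref{P:ContrMult}), giving $\|M_\Gamma\wtil{\bf x}\|=\|\wtil{\bf x}\|$, and that $\mathrm{Ran}\,M_\Gamma=\cH(K_S)\ominus(\text{something})$; combined with coisometry, $\cH(K_S)=\mathrm{Ran}\,M_\Gamma\oplus(\mathrm{Ran}\,M_G\ominus\text{overlap})$, which forces the orthogonal splitting in \eqref{OrthoRelation}. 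The identity $\|M_Gh\|^2=\|P_{\cH(K_\cE)\ominus\ker M_G}h\|^2$ is immediate from the fact that a coisometry is isometric on $(\ker)^\perp$. The minimal-norm assertion then follows since the $\wtil{\bf x}$-term is fixed.

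For part (3), I would argue uniqueness holds iff the parameter $h$ is forced to be unique (modulo $\ker M_G$). From \eqref{OrthoRelation}, two solutions $f,f'$ coincide iff $M_Gh=M_Gh'$, i.e., iff $P_{\cH(K_\cE)\ominus\ker M_G}h=P_{\cH(K_\cE)\ominus\ker M_G}h'$. If $\|\wtil{\bf x}\|=1$ the norm bound forces $\|h\|_{\cH(K_\cE)}=0$, hence $h=0$ is the only choice and uniqueness is trivial. If $\|\wtil{\bf x}\|<1$, there is genuine freedom in $h$ unless $\cH(K_\cE)\ominus\ker M_G=\{0\}$, i.e., unless $M_G=0$. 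By \eqref{3.14b} this means $K_S(z,\zeta)=\Gamma(z)\Gamma(\zeta)^*$, equivalently $\cH(K_S)=\mathrm{Ran}\,M_\Gamma$; and since $M_{F^S}=M_\Gamma P^\half$ with $\mathrm{Ran}\,P^\half$ dense in $\cX_0$, this is equivalent to $\ov{\mathrm{Ran}}\,M_{F^S}=\cH(K_S)$. This is precisely the stated condition. I expect the main obstacle to be the bookkeeping in part (2): correctly extracting the orthogonality of the $M_\Gamma\wtil{\bf x}$-component from the $M_Gh$-component. One must verify that even though $h$ ranges over all of $\cH(K_\cE)$ (not just $(\ker M_G)^\perp$), the vector $M_\Gamma\wtil{\bf x}$ is orthogonal to $\mathrm{Ran}\,M_G$ — this follows because $M_G^*M_\Gamma=0$, which in turn follows from the coisometry relation $\begin{bmatrix}M_G&M_\Gamma\end{bmatrix}^*\begin{bmatrix}M_G&M_\Gamma\end{bmatrix}$ having the identity in its diagonal blocks once $M_\Gamma$ is an isometry (a coisometry with one isometric column block forces the off-diagonal block of the Gram matrix to vanish). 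With that observation in hand the Pythagorean identity \eqref{OrthoRelation} and parts (2)–(3) are routine.
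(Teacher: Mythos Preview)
Your proposal is correct and follows essentially the same approach as the paper: reduce part (1) to the positivity of the $2\times2$ kernel \eqref{3.17}, interpret this via the range characterization of the reproducing kernel Hilbert space with kernel $G(z)K_{\cE}(z,\zeta)G(\zeta)^*$, and then use Proposition~\ref{P:ContrMult} (coisometry of $[M_G\ M_\Gamma]$ together with $M_\Gamma$ isometric) for parts (2) and (3). Your explicit verification that $M_G^*M_\Gamma=0$ (via $M_\Gamma^*M_GM_G^*M_\Gamma = M_\Gamma^*(I-M_\Gamma M_\Gamma^*)M_\Gamma = 0$) is a clean way to justify the orthogonality that the paper simply declares ``evident,'' and your observation that the norm constraint should read $\|h\|_{\cH(K_\cE)}$ rather than $\|h\|_{\cH(K_S)}$ is well taken.
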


\begin{proof} As we have seen, a function $f: \D\to\cY$ solves the problem $\AP$
if and only if \eqref{3.17} holds, that is, if and only if the function
$g:=f-M_\Gamma\widetilde{\bf x}$
belongs to the reproducing kernel Hilbert space $\cH(\widetilde{K})$ with
reproducing kernel $\widetilde{K}(z,\zeta)=G(z)K_\cE(z,\zeta)G(\zeta)^*$
and satisfies $\|g\|_{\cH(\widetilde{K})}\le \sqrt{1-\|\widetilde{\bf
x}\|^2}$. The range characterization of $\cH(\widetilde{K})$ tells us that
$$
\cH(\widetilde{K})=\{G(z)h(z): \; h\in\cH(K_\cE)\} \; \mbox{with norm} \;
\|M_Gh\|_{\cH(\widetilde{K})}=\|(I-{\bf q})h\|_{\cH(K_\cE)}
$$
where ${\bf q}$ is the orthogonal projection onto the subspace
$\ker M_G\subset \cH(K_\cE)$. Therefore, the function
$g=f-M_\Gamma\widetilde{\bf x}$ is of the form $g=M_Gh$ for some
$h\in\cH(K_\cE)$ such that
$\|h\|_{\cH(K_\cE)}=\|g\|_{\cH(\widetilde{K})}\le \sqrt{1-\|\widetilde{\bf
x}\|^2}$. This proves the characterization of solutions through \eqref{sols}.

Since $P=M_{F^S}^{[*]}M_{F^S}$, it follows from Proposition
\ref{P:ContrMult} that the operator \eqref{3.14} is a coisometry and $M_\Ga$
is an isometry. From this combination the orthogonality between the minimal-norm
solution $f_\textup{min}(z)=\Ga(z)\wtil{\bf x}$ and the remainder on the right hand
side of \eqref{sols}, as well as the second identity in \eqref{OrthoRelation},
is evident.

Since $M_G$ is a partial isometry, it follows from \eqref{sols} that
the problem $\AP$ admits a unique solution if and only if either $\|\wtil{\bx}\|=1$
(because then $h=0\in\cH(K_\cE)$ is the only admissible parameter), $\cH(K_\cE)=\{0\}$ (i.e., if
$\cE$ is an unimodular constant) or $M_G=0$. Since the operator \eqref{3.14}
is a coisometry and because $M_\Ga$ is an isometry, the last two cases are
covered by the condition that $M_\Ga$ is unitary. Due to the relation
between $F^{S}$ and $\Ga$ (see \eqref{3.14'}), this is equivalent to $M_{F^S}$ having dense range.
\end{proof}

Although the correspondence $\cE\to S={\boldsymbol{\mathcal
R}}_\Sigma[\cE]$ established by formula \eqref{3.9} is not one-to-one in
general, it follows from the proof of Theorem \ref{T:AIPsol} that
in order to find all solutions $f$ of the problem $\AP$ it suffices
to take into account just one parameter $\cE$ so that
$S={\boldsymbol{\mathcal R}}_\Sigma[\cE]$, rather than all.
The further analysis in Section \ref{S:Redheffer}, i.e., Theorem \ref{T:InjAndIso} and
Lemma \ref{L:8.4}, provide conditions under which the Schur class function $\cE$ in Theorem
\ref{T:AIPsol} is unique.

\begin{thm}
Let \eqref{1.2a} be an $\AP$-admissible data set satisfying condition
\eqref{2.6} and assume that the operator $T^*$ satisfies condition \eqref{8.30}. Then:
\begin{enumerate}
\item There is a unique Schur-class function $\cE$ such that
$S={\boldsymbol{\mathcal R}}_\Sigma[\cE]$, where ${\boldsymbol{\mathcal
R}}_\Sigma$ is the Redheffer transform constructed from the data set
\eqref{1.2a} via \eqref{3.3}--\eqref{RedCoefs2}.

\item The parametrization $h\mapsto f$ in Theorem \ref{T:AIPsol}, via formula
\eqref{sols}, of the solutions $f$ to the problem $\AP$ is injective. That is,
the operator $M_G: \, \cH(K_\cE)\to  \cH(K_S)$ is isometric so that in addition
\begin{equation}
\|f\|^2_{_{\cH(K_S)}}=\|\widetilde{\bf
x}\|^2_{_{\cX_0}}+\|h\|^2_{_{\cH(K_S)}}.
\label{3.20a}
\end{equation}
\end{enumerate}
\label{T:3.11}
\end{thm}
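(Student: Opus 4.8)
The plan is to reduce everything to the injectivity results already established in Section~\ref{S:Redheffer}. For part~(1), the hypothesis \eqref{8.30} on $T^*$ is precisely the hypothesis of Theorem~\ref{L:8.4}, which gives that $M_{\Sigma_{12}}$ is injective on $\operatorname{Hol}_{\widetilde\Delta_*}(\D)$. As noted in the remark following Theorem~\ref{L:8.4}, under \eqref{8.30} one moreover gets that $M_{\Sigma_{21}}$ has dense range (this follows from $\ker D_{21}^*=\{0\}$ in Lemma~\ref{L:kers}, which needs only that $P^\half|_{\cX_0}$ is injective and holds regardless of any condition on $T$). Hence the hypotheses of Lemma~\ref{L:noker} are met, and we conclude that $\Red_\Sigma$ is injective; since $S=\Red_\Sigma[\cE]$ by the choice of $\cE$, this $\cE$ is the unique Schur-class function with that property. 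First I would simply invoke this chain of results, perhaps restating that the argument of Theorem~\ref{T:InjAndIso} goes through verbatim with ``$T^*$ injective'' replaced by \eqref{8.30}, as already observed in the text.

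For part~(2), the key input is again Lemma~\ref{L:noker}: its conclusion includes that, for \emph{any} $\cE\in\cS(\widetilde\Delta,\widetilde\Delta_*)$, the multiplication operator $M_G\colon\cH(K_\cE)\to\cH(K_S)$ has trivial kernel. Combined with the admissibility hypothesis $P=M_{F^S}^{[*]}M_{F^S}$ (which is exactly condition~\eqref{2.6} together with $\AP$-admissibility, giving equality rather than just $\le$) and Proposition~\ref{P:ContrMult}, we get that $M_G$ is not merely injective but \emph{isometric}: indeed Proposition~\ref{P:ContrMult} says $M_\Ga$ is an isometry and $M_G$ a partial isometry whenever $P=M_{F^S}^{[*]}M_{F^S}$, and a partial isometry with trivial kernel is an isometry. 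Then in the parametrization \eqref{sols}, distinct parameters $h$ yield distinct $G(z)h(z)$, hence distinct $f$, so $h\mapsto f$ is injective. The norm identity \eqref{3.20a} is immediate from \eqref{OrthoRelation} in Theorem~\ref{T:AIPsol}: the second term there is $\|P_{\cH(K_\cE)\ominus\ker M_G}h\|^2$, and since $\ker M_G=\{0\}$ this projection is the identity, giving $\|f\|^2_{\cH(K_S)}=\|\widetilde{\bf x}\|^2+\|h\|^2_{\cH(K_S)}$.

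I expect the proof to be short, essentially a matter of correctly assembling the pieces and checking that the hypothesis of the present theorem (admissibility plus \eqref{2.6} plus \eqref{8.30}) supplies exactly what Lemma~\ref{L:noker}, Theorem~\ref{L:8.4}, and Proposition~\ref{P:ContrMult} demand. The one point that deserves a sentence of care is the transition in part~(1) from ``$M_{\Sigma_{12}}$ injective and $M_{\Sigma_{21}}$ dense range'' to the setting of Lemma~\ref{L:noker}: one must make sure that under \eqref{8.30} the dense-range property of $M_{\Sigma_{21}}$ genuinely holds, which I would trace back through the argument in the proof of Theorem~\ref{T:InjAndIso} (where it was shown that $M_{\Sigma_{21}}$ has dense range iff $\Sigma_{21}(0)=D_{21}$ has dense range, i.e.\ iff $\ker D_{21}^*=\{0\}$, which Lemma~\ref{L:kers} establishes unconditionally). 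That is the only place where one might otherwise be tempted to over-use the stronger hypothesis ``$T^*$ injective,'' so I would flag it explicitly. No genuine obstacle is anticipated; the theorem is a clean corollary of the machinery already built.
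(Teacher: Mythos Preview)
Your proposal is correct and matches the paper's intended argument. The paper does not give an explicit proof of this theorem; it is presented as an immediate consequence of the remark following Theorem~\ref{L:8.4} (that all conclusions of Theorem~\ref{T:InjAndIso} hold with \eqref{8.30} in place of ``$T^*$ injective'') together with Proposition~\ref{P:ContrMult} and Theorem~\ref{T:AIPsol}, which is precisely the chain of results you assemble.
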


\begin{rem} {\rm
Given an $\AP$-admissible data set $(S,E,N,T,\bx)$, it is straightforward
that $(E,N,T,P)$ with $P=F^{S[*]}F^{S}$ is an $\APS$-admissible data set
and that $S$ is a solution for the associated problem $\APS$. Now consider
another solution $\wtil{S}\in\cS(\cU,\cY)$ of the problem $\APS$. Unlike
for $S$, this solution $\wtil{S}$ satisfies $M_{F^{\wtil{S}}}^{[*]}M_{F^{\wtil{S}}}\leq P$
and equality may not hold. We may then still ask the question for
which functions $f:\BD\to\cY$ the kernel in \eqref{2.8} is positive, with $S$
is replaced by $\wtil{S}$. This question turns out to to be equivalent to
that of determining the $f\in\cH(K_{\wtil{S}})$ with $\|f\|_{\cH(K_{\wtil{S}})}\le 1$ and
such that the vector $M_{F^{\wtil{S}}}^{[*]}f$ is close to ${\bf x}$ in the sense that
\begin{equation}
M_{F^{\wtil{S}}}^{[*]}f={\bf
x}+\sqrt{1-\|f\|^2_{\cH(K_{\wtil{S}})}}\left(P-M_{F^{\wtil{S}}}^{[*]}
M_{F^{\wtil{S}}}\right)^{\frac{1}{2}}\widehat{\bf x}
\label{3.2a}
\end{equation}
for some $\widehat{\bf x}\in\cX$ with $ \|\widehat{\bf x}\|\le 1$. The solutions
to this problem can still be parameterized by formula \eqref{sols}, with now
$\cE\in\cS(\wtil{\De},\wtil{\De}_*)$ so that $\wtil{S}={\boldsymbol{\mathcal R}}_\Sigma[\cE]$
in the definition of $\Ga$ and $G$, with the twist that in this case, because we may not have
$M_{F^{\wtil{S}}}^{[*]}M_{F^{\wtil{S}}}= P$, there is no guarantee that we have orthogonality
as in \eqref{OrthoRelation}, nor is it clear if the `central' solution $f=M_{\Ga}\wtil{\bf x}$
is the solution with minimal norm.}
\end{rem}

To conclude this section we will briefly discuss the interplay between
the uniqueness of $S$ as a solution of the problem $\APS$ (with
$P = M_{F^{S}}^{[*]} M_{F^{S}}$ of the form \eqref{2.6}) and the determinacy of the related
(unconstrained) problem $\AP$. We will assume that the operator $T$
meets the condition \eqref{8.30}, leaving the general case open.
Under this assumption, there are only three uniqueness and
semi-uniqueness cases. Recall that $\Delta$ and $\Delta_*$ are the defect
spaces of the isometry \eqref{3.3}.\medskip

{\bf Case 1:} Let $\Delta_*=\{0\}$. Then
$S=\Sigma_{11}$ is the unique solution of the problem $\APS$.
Furthermore, we conclude from \eqref{GGa} that
$$
\Gamma(z)=C_1(I-zA)^{-1},\quad G(z)\equiv 0\quad\mbox{and}\quad
\cH(K_\cE)=\{0\}.
$$
 By Theorem \ref{T:AIPsol}, the unconstrained problem
$\AP$ has a unique solution $f(z)=C_1(I-zA)^{-1}\widetilde{\bf x}$ where
$\widetilde{\bf x}$ is the unique vector in $\cX_0$ such that
$P^\half\widetilde{\bf x}={\bf x}$.\medskip

{\bf Case 2:} Let $\Delta=\{0\}$. In this case still $S=\Sigma_{11}$ is
the unique solution of the problem $\APS$. Also we have
$\Gamma(z)=C_1(I-zA)^{-1}$. However, we now have
$G=\Sigma_{12}$ and $\cH(K_\cE)=H^2_{\widetilde{\Delta}_*}$. By Theorem
\ref{T:AIPsol}, all
solutions $f$ to the unconstrained problem $\AP$ are given by
\begin{equation}
f(z)=C_1(I-zA)^{-1}\widetilde{\bf x}+\Sigma_{12}(z)h(z),
\label{8.15}
\end{equation}
where $\widetilde{\bf x}$ is as above  and where $h$ varies in
$H^2_{\widetilde{\Delta}_*}$. One can see that the same description holds
if the spaces \eqref{3.4} are nontrivial and $S=\Sigma_{11}$ is the
central (but not unique) solution to the associated problem $\APS$.\medskip

{\bf Case 3:} Let  $\Delta$ and $\Delta_*$ be nontrivial and let us
assume that $S$ is an extremal solution to the problem $\APS$
(in the sense that the unique $\cE$ such that $S={\boldsymbol{\mathcal
R}}_{\Sigma}[\cE]$ is a coisometric constant). Then the unconstrained
problem $\AP$ has a unique solution since in this case
$H(K_{\cE})=\{0\}$.

\section{Interpolation with operator argument}\label{S:OAP}

In this section we show that the interpolation problem with operator
argument in the space $\cH(K_S)$ can be embedded into the general scheme
of the problem $\AP$ considered above. Recall that a pair $(E,T)$ with
$E\in\cL(\cY,\cX)$ and $T\in\cL(\cX)$ is called an analytic output pair if
the observability operator $\cO_{E,T}$ maps $\cX$ into
$\operatorname{Hol}_{\cY}(\D)$. The starting point for the
operator-argument point-evaluation is a so-called {\em output-stable} pair $(E,T)$
which is an analytic output pair with the additional property that
$\cO_{E,T}\in\cL(\cX,H_\cY^2)$:
\begin{equation}
\cO_{E,T}: \; x\mapsto E(I-zT)^{-1}x=\sum_{n=0}^\infty z^n ET^nx\in
H^2_\cY.
\label{3.1a}
\end{equation}
Given such an output-stable pair $(E,T)$ and a
function $f\in H^2_\cY$, we  define the
    {\em left-tangential operator-argument point-evaluation}
$(E^{*}f)^{\wedge  L}(T^{*})$
    of $f$ at $(E,T)$ by
    \begin{equation}\label{1.4}
    (E^{*} f)^{\wedge L}(T^{*}) = \sum_{n=0}^\infty
    T^{* n} E^{*} f_{n}\quad \text{if} \quad f(z) =
    \sum_{n=0}^\infty f_{n} z^{n}.
    \end{equation}
    The computation
$$
    \left\langle \sum_{n=0}^\infty T^{* n}E^{*}
    f_{n}, \; x \right \rangle_{\cX}  =
    \sum_{n=0}^\infty \left\langle f_{n}, \; E T^{n}
            x \right \rangle_{\cY}
             = \langle f, \; \mathcal {O}_{E, T} x\rangle_{H^2_{\cY}}
$$
    shows that the output-stability of the pair $(E, T)$ is exactly
    what is needed for the infinite series in the definition
    of $(E^{*}f)^{\wedge L}(T^{*})$ in \eqref{1.4} to converge
    in the weak topology on $\cX$.
The same computation shows that tangential evaluation
with operator argument amounts to the adjoint of $\cO_{E, T}$:
\begin{equation}
(E^{*}f)^{\wedge  L}(T^{*})= \cO_{E, T}^{*}f\quad\mbox{for}\quad f \in
H_\cY^2.
\label{1.6}
\end{equation}
Evaluation \eqref{1.4} applies to functions from de
Branges-Rovnyak spaces $\cH(K_S)$ as well, since $\cH(K_S)\subset H_\cY^2$,
and suggests the following interpolation problem.

\medskip

$\OAP$: {\em Given $S\in\cS(\cU,\cY)$, $T\in\cL(\cX)$, $E\in\cL(\cY,\cX)$
and ${\bf x}\in\cX$ so that the pair $(E,T)$ is output stable, find all
functions
$f\in\cH(K_S)$ such that
\begin{equation}
\|f\|_{_{\cH(K_S)}}\le 1\quad\mbox{and}\quad(E^{*}f)^{\wedge
L}(T^{*})=\cO_{E, T}^{*}f={\bf x}.
\label{1.7}
\end{equation}}
In the scalar-valued case $\cU=\cY=\C$, the latter problem has been
considered recently in \cite{bbt2}, with the additional assumption that $P>0$.
Similarly to the situation in \cite{bbt2},
the operator-valued version contains left-tangential Nevanlinna-Pick and
Carath\'eodory-Fej\'er interpolation problems as particular cases
corresponding to special choices of $E$ and $T$. We now show that
on the other hand, the problem $\OAP$ can be considered as a particular
case of the problem $\AP$.

\begin{lem}
Let $(E,T)$ be an output stable pair with $E\in\cL(\cY,\cX)$ and
$T\in\cL(\cX)$, let $S\in\cS(\cU,\cY)$ be a Schur-class function and let
$N\in\cL(\cX,\cU)$ be defined by
\begin{equation}\label{Ndef}
N:=\sum_{j=0}^\infty S_j^*ET^j,\quad\mbox{where}\quad
    S(z)=\sum_{j=0}^\infty S_j z^j
\end{equation}
or equivalently, via its adjoint
\begin{equation}
N^*=\cO_{E, T}^{*}M_S\vert_\cU: \, \cU\to\cX.
\label{1.8}
\end{equation}
Then the data set
${\mathcal D}=\{S, \, T, \, E, \, N, \, {\bf x}\}$
is $\AP$-admissible for every ${\bf x}\in\cX$.
Furthermore, $M^{[*]}_{F^S}=\cO_{E,T}^*|_{\cH(K_S)}$, so that the interpolation
conditions \eqref{1.7} coincide with those in \eqref{1.3a}.
\label{L:5.2}
\end{lem}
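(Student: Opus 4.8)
\textbf{Proof proposal for Lemma \ref{L:5.2}.}

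The plan is to verify the three conditions of Definition \ref{D:1.1} and then to check the final assertion about $M_{F^S}^{[*]}$. First I would observe that condition (1) is automatic: the pair $(E,T)$ is output stable by hypothesis, so $\cO_{E,T}$ maps $\cX$ into $H^2_\cY \subset \operatorname{Hol}_\cY(\D)$; and since $N^* = \cO_{E,T}^* M_S|_\cU$ is a bounded operator from $\cU$ into $\cX$, its adjoint $N$ is bounded, and $\cO_{N,T}$ will likewise be output stable once we check the series defining $N$ converges appropriately --- but in fact boundedness of $M_S$ together with output stability of $(E,T)$ shows directly that $\cO_{N,T} \in \cL(\cX, H^2_\cU)$, so $(N,T)$ is an analytic (indeed output-stable) pair. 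The equivalence of the two descriptions \eqref{Ndef} and \eqref{1.8} of $N$ is a routine computation: pairing $Nx$ against $u \in \cU$ and using \eqref{1.4a}, \eqref{1.6} one gets $\langle Nx, u\rangle = \langle \cO_{N,T}x, u\rangle$-type identities matching $\sum_j \langle ET^jx, S_j u\rangle$, which is exactly $\langle \cO_{E,T}^* M_S u, x\rangle^-$.

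The heart of the argument, and what I expect to be the main obstacle, is the identification $M_{F^S}^{[*]} = \cO_{E,T}^*|_{\cH(K_S)}$, because this single identity simultaneously yields condition (2) ($M_{F^S}$ maps into $\cH(K_S)$, equivalently $\cO_{E,T}^*$ maps $\cH(K_S)$ into $\cX$, which is clear), the Stein equation in condition (3), and the last sentence of the lemma. Here the subtlety flagged repeatedly in the Introduction is relevant: the adjoint $[*]$ is taken in the $\cH(K_S)$ metric, not the $H^2_\cY$ metric. The strategy is to compute $M_{F^S}^*$ (the $H^2_\cY$-adjoint) first and then correct. Using \eqref{1.18a}, $M_{F^S} = \cO_{E,T} - M_S \cO_{N,T}$, so $M_{F^S}^* = \cO_{E,T}^* - \cO_{N,T}^* M_S^*$ as operators into $\cX$. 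Then I would use the standard relation between the $H^2_\cY$-adjoint and the $\cH(K_S)$-adjoint of an operator with range in $\cH(K_S)$: since $\cH(K_S) = \Ran(I - M_SM_S^*)^{1/2}$ with the lifted norm \eqref{1.2}, for $A \colon \cX \to \cH(K_S)$ one has $A^{[*]} = A^*(I - M_SM_S^*)^{-1}$ on an appropriate domain, or more cleanly $A^{[*]} = A^* \iota$ where $\iota \colon \cH(K_S) \hookrightarrow H^2_\cY$ is the inclusion and one accounts for $\iota^{[*]}\iota$ being the identity while $\iota \iota^{[*]} = I - M_SM_S^*$ on $H^2_\cY$. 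Applying this with $A = M_{F^S}$ and simplifying, the term $\cO_{N,T}^* M_S^* M_S$ should combine with $\cO_{E,T}^* M_SM_S^*$-type contributions so that everything collapses: using $N^* = \cO_{E,T}^* M_S|_\cU$ and the multiplicativity of $M_S$, one finds $M_{F^S}^{[*]} f = \cO_{E,T}^* f$ for $f \in \cH(K_S)$.

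Once that identity is in hand, condition (3) follows quickly: $P = M_{F^S}^{[*]} M_{F^S} = \cO_{E,T}^* \iota M_{F^S} = \cO_{E,T}^* M_{F^S}$ viewed in $H^2_\cY$, and then
$$
P - T^* P T = \cO_{E,T}^* M_{F^S} - T^* \cO_{E,T}^* M_{F^S} T,
$$
and I would use the intertwining $\cO_{E,T} T = \frac{1}{z}(\cO_{E,T} - E)$ (equivalently $T^* \cO_{E,T}^* = $ backward shift applied to $\cO_{E,T}^*$ composed appropriately) together with the analogous identity for $\cO_{N,T}$ and the fact that $M_S$ commutes with the shift, to telescope the difference down to $E^*E - N^*N$. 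Concretely, writing $F^S(z) = (E - S(z)N)(I - zT)^{-1}$ and using $(I-zT)^{-1} - zT(I-zT)^{-1} = I$, the Taylor coefficients of $F^S x$ satisfy a one-step recursion whose "mass" computation gives exactly \eqref{1.11}; alternatively one can quote that $P = \cO_{F^S, T}$-Gramian-type objects satisfy the Stein equation with coefficient $E^*E - N^*N$ because $F^S$ itself has a realization with main operator $T$, input operator built from $E$ and $-SN$, and the Stein equation is the standard observability Gramian identity. Finally, the concluding sentence "the interpolation conditions \eqref{1.7} coincide with those in \eqref{1.3a}" is immediate from $M_{F^S}^{[*]} = \cO_{E,T}^*|_{\cH(K_S)}$ and the definition \eqref{1.6} of the operator-argument evaluation, so no further work is needed there.
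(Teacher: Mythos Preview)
Your overall strategy is sound, but you are missing the one identity that makes everything clean, and your ordering is slightly circular.

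The key observation you never write down is that the definition of $N$ forces
\[
\cO_{N,T}=M_S^*\,\cO_{E,T}\qquad\text{and hence}\qquad M_{F^S}=(I-M_SM_S^*)\,\cO_{E,T}.
\]
This follows by a short Taylor-coefficient computation from \eqref{Ndef} (or by taking adjoints in \eqref{1.8} and noting how $M_S^*$ acts on $H^2_\cY$). Once you have $M_{F^S}=(I-M_SM_S^*)\cO_{E,T}$, condition (2) is immediate from the range description $\cH(K_S)=\Ran(I-M_SM_S^*)^{1/2}$, and the adjoint identity drops out of the polarized form of \eqref{1.2}: for $f\in\cH(K_S)$ and $h\in H^2_\cY$ one has $\langle f,(I-M_SM_S^*)h\rangle_{\cH(K_S)}=\langle f,h\rangle_{H^2_\cY}$, so
\[
\langle f,M_{F^S}x\rangle_{\cH(K_S)}=\langle f,(I-M_SM_S^*)\cO_{E,T}x\rangle_{\cH(K_S)}=\langle f,\cO_{E,T}x\rangle_{H^2_\cY}=\langle \cO_{E,T}^*f,x\rangle_\cX.
\]
This is exactly how the paper proceeds. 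Your proposed route via ``$A^{[*]}=A^*(I-M_SM_S^*)^{-1}$'' is the same computation in disguise, but as stated it is ill-defined (the operator $I-M_SM_S^*$ need not be invertible) and the alternative ``$A^{[*]}=A^*\iota$'' is simply false in general; you only recover the correct answer after substituting $M_{F^S}^*=\cO_{E,T}^*(I-M_SM_S^*)$, which again requires the identity above. Also note that you cannot use $M_{F^S}^{[*]}=\cO_{E,T}^*|_{\cH(K_S)}$ to \emph{deduce} condition (2): the symbol $M_{F^S}^{[*]}$ only makes sense after you know $M_{F^S}$ maps into $\cH(K_S)$, so condition (2) must be established first.

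For condition (3) the paper's route is considerably shorter than your intertwining argument: from \eqref{1.2} one gets directly
\[
P=M_{F^S}^{[*]}M_{F^S}=\cO_{E,T}^*\cO_{E,T}-\cO_{N,T}^*\cO_{N,T},
\]
and each observability Gramian $\cO_{C,T}^*\cO_{C,T}$ satisfies the standard Stein identity $G-T^*GT=C^*C$ (immediate from the series $G=\sum_{n\ge0}T^{*n}C^*CT^n$). Subtracting gives \eqref{1.11} in one line. Your telescoping/intertwining argument would also work, but it is more laborious and you should not need it.
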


\begin{proof} For $N$ defined as in \eqref{Ndef}, the pair
$(N,T)$ is output stable (cf. \cite[Proposition  3.1]{bbieot})
and the observability operator $\cO_{N,T}: \; x\mapsto
N(I-zT)^{-1}x$ equals
\begin{equation}
\cO_{N, T}=M_S^*\cO_{E, T}: \; \cX\to H_\cU^2.
\label{1.9}
\end{equation}
With $N$ as above, we now define $F^S$ by formula \eqref{1.17}.
For the multiplication operator \eqref{1.18a} we have, on account of
\eqref{1.9},
\begin{equation}
M_{F^S}=\cO_{E,T}-M_S\cO_{N,T}=(I-M_S^*M_S)\cO_{E,T}
\label{1.18}
\end{equation}
which together with the  range characterization of $\cH(K_S)$ implies
that $M_{F^S}$ maps $\cX$ into $\cH(K_S)$. Furthermore, it follows from
\eqref{1.17}, \eqref{1.2} and \eqref{1.4} that
\begin{align*}
\|F^Sx\|^2_{\cH(K_S)}=&
\langle (I - M_S M_S^{*}) \cO_{E,T}x, \cO_{E,T}x\rangle_{H_\cY^2}\\
=&\langle (\cO_{E,T}^{*} \cO_{E,T} -
          \cO_{N,T}^{*}\cO_{N,T})x, x \rangle_{\cX}
\end{align*}
for every $x\in\cX$. The latter equality can be written in operator form
as
\begin{equation}
P:=M_{F^S}^{[*]}M_{F^S}=\cO_{E,T}^{*} \cO_{E,T} -
          \cO_{N,T}^{*}\cO_{N,T}.
\label{1.10}
\end{equation}
It follows from the series representation \eqref{3.1a} and the definition
of inner product in $H^2_\cY$ that
$$
\cO_{E,T}^{*} \cO_{E,T}=\sum_{n=0}^\infty T^{*n}E^*ET^n
$$
with convergence in the strong operator topology. Using the latter  series
expansion one can easily verify the identity
\begin{equation}
\cO_{E,T}^{*} \cO_{E,T}-T^*\cO_{E,T}^{*} \cO_{E,T}T=E^*E.
\label{1.20}
\end{equation}
Since the pair $(N,T)$ is also output stable, we have similarly
\begin{equation}
\cO_{N,T}^{*} \cO_{N,T}-T^*\cO_{N,T}^{*} \cO_{N,T}T=N^*N.
\label{1.21}
\end{equation}
Subtracting \eqref{1.21} from \eqref{1.20} and taking into account
\eqref{1.10} we conclude that $P$ satisfies the Stein identity
\eqref{1.11}. Thus, the data set ${\mathcal D}$ is $\AP$-admissible.
In view of \eqref{1.18} and \eqref{1.2}, the equalities
 \begin{align*}
    \langle M_{F^S}^{[*]} f, \, x\rangle_{\cX}
    =\langle f, \,M_{F^S} x\rangle_{\cH(K_S)}
    =&\langle f, \, (I-M_SM_S^*)\cO_{E,T}x\rangle_{\cH(K_S)}
    \notag \\
    =&\langle f, \, \cO_{E,T}x\rangle_{H^2_\cY}=\langle
    \cO_{E,T}^*f, \, x\rangle_{\cX}\notag
    \end{align*}
hold for all $f\in\cH(K_S)$ and $x\in\cX$. Therefore,
$M_{F^S}^{[*]}=\cO_{E,T}^*|_{\cH(K_S)}$.
\end{proof}

As a consequence of Lemma \ref{L:5.2}, the solutions to the problem $\OAP$ are
obtained from Theorem \ref{T:AIPsol}, after specialization to the case under
consideration. We do not state this specialization of Theorem \ref{T:AIPsol} here
because the formulas do not significantly simplify. Instead we now discuss
the operator-argument interpolation problem for functions in $H^2_\cY$, that is, the
problem $\OAP$ with $S\equiv0$. As we shall see, in that case the problems $\AP$
and $\OAP$ coincide.

Consider an $\AP$-admissible data set $\{S, \, T, \, E, \, N, \, {\bf x}\}$ with
$S\equiv0\in\cS(\cU,\cY)$. Then $\cH(K_S)=H^2_\cY$ and $F^S=\cO_{E,T}$. Thus
condition (2) in Definition \ref{D:1.1} just says that $\FS=\cO_{E,T}$ is in
$\cL(\cX,H^2_\cY)$, and thus
that $(E,T)$ is output-stable. The third condition states that
$P=\FSstar\FS=\cO_{E,T}^*\cO_{E,T}$ satisfies the Stein equation \eqref{1.11}.
This implies that necessarily $N^*=0=(E^* S)^{\wedge L}(T^*)$, and it follows that
the problem ${\AP}$ reduces to the problem $\OAP$ with data $T$, $E$ and ${\bf x}$,
and $S\equiv0$. We now specify Theorem \ref{T:AIPsol} to this case, with the additional
assumption that $P$ is positive definite.

\begin{thm}\label{T:5.1}
Given an output stable pair $(E,T)$ with $E\in\cL(\cY,\cX)$ and $T\in\cL(\cX)$, and
${\bf x}\in\cX$. Assume that ${\bf x}{\bf x}^*\leq P:=\cO_{E,T}^*\cO_{E,T}$
and that $P$ is positive definite. Then the set of all
$f\in\cH^2_\cY$ satisfying
\[
\|f\|_{H^2_\cY}\leq 1\quad\text{and}\quad (E^* f)^{L}(T^*)={\bf x}
\]
is given by the formula
\begin{equation}\label{H2param}
f(z)=E(I-zT)^{-1}P^{-1}\bx+B(z)h(z)
\end{equation}
where $h$ is a free parameter from the ball
$$
\left\{ h \in H^{2}_{\cY_{0}} \colon \|h\|^2_{H^2_{\cY_{0}}}\le
1- \bx^{*}P^{-1}\bx \right\}\subset H^2_\cY
$$
for an auxiliary Hilbert space $\cY_0$; here
 $B(z)$ is the
inner function in the Schur class $\cS(\cY_{0}, \cY)$  determined uniquely
(up to a constant unitary factor on the
right) by the identity
\begin{equation}   \label{innerB}
K_B(z,\zeta):=\frac{I_\cY -B(z)B(\zeta)^{*}}{1
-z\overline{\zeta}}=E(I- zT)^{-1} P^{-1}(I -
\overline{\zeta}T^{*})^{-1}E^*.
\end{equation}
\end{thm}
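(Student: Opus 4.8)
The plan is to obtain Theorem~\ref{T:5.1} as a direct specialization of Theorem~\ref{T:AIPsol} to the data set $\{0,T,E,N,{\bf x}\}$ with $N=0$, combined with the observations about the $S\equiv 0$ case recorded just before the statement. First I would recall that when $S\equiv 0$ we have $\cH(K_S)=H^2_\cY$, $F^S=\cO_{E,T}$, $K_S(z,\zeta)=\frac{I_\cY}{1-z\bar\zeta}$, and, by the preceding discussion, $N=0$ and $P=\cO_{E,T}^*\cO_{E,T}=M_{F^S}^{[*]}M_{F^S}$, so the admissibility hypotheses and the condition ${\bf x}{\bf x}^*\le P$ of Theorem~\ref{T:AIPsol} are exactly the hypotheses assumed here. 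Since $P$ is positive definite, I could either invoke Theorem~\ref{T:AIPsol} directly or, more cleanly, invoke Theorem~\ref{T:3.4} (the nondegenerate case), which already gives that every solution has the form $f(z)=F^S(z)P^{-1}{\bf x}+h(z)=E(I-zT)^{-1}P^{-1}{\bf x}+h(z)$ with $h$ ranging over the ball of radius $\sqrt{1-\|P^{-1/2}{\bf x}\|^2}=\sqrt{1-{\bf x}^*P^{-1}{\bf x}}$ in $\cH(\widetilde K_S)$, where
\[
\widetilde K_S(z,\zeta)=K_S(z,\zeta)-F^S(z)P^{-1}F^S(\zeta)^*
=\frac{I_\cY}{1-z\bar\zeta}-E(I-zT)^{-1}P^{-1}(I-\bar\zeta T^*)^{-1}E^*.
\]
This already matches \eqref{H2param} except for the description of $\cH(\widetilde K_S)$.

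The remaining point, and the only real content beyond bookkeeping, is to identify the space $\cH(\widetilde K_S)$ as $B\cdot H^2_{\cY_0}$ for an inner $B\in\cS(\cY_0,\cY)$ satisfying \eqref{innerB}. The key step is to observe that $\widetilde K_S$ is itself a de Branges--Rovnyak kernel: since $\widetilde K_S(z,\zeta)=K_S(z,\zeta)-F^S(z)P^{-1}F^S(\zeta)^*\preceq K_S(z,\zeta)=\frac{I_\cY}{1-z\bar\zeta}$ and $\widetilde K_S\succeq 0$, the function $\frac{I_\cY}{1-z\bar\zeta}-\widetilde K_S(z,\zeta)=E(I-zT)^{-1}P^{-1}(I-\bar\zeta T^*)^{-1}E^*$ is a positive kernel, so $\widetilde K_S$ has the form $\frac{I_\cY-B(z)B(\zeta)^*}{1-z\bar\zeta}$ for some $B$. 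One then checks that this $B$ is in fact a Schur-class multiplier and, moreover, inner. Innerness is where one uses that $P=\cO_{E,T}^*\cO_{E,T}$ exactly (not merely $\le$): the multiplication operator $M_B\colon H^2_{\cY_0}\to H^2_\cY$ is an isometry precisely when $I-M_BM_B^*$ is the orthogonal projection onto $\cH(K_B)=\cH(\widetilde K_S)$, equivalently when $\cH(\widetilde K_S)$ is isometrically a subspace of $H^2_\cY$ with the $H^2$-norm; and this in turn follows from the fact, already used in Theorem~\ref{T:3.4}, that $M_{F^S}P^{-1/2}=\cO_{E,T}P^{-1/2}$ is an isometry, so that $\cN=\{F^Sx:x\in\cX\}$ sits isometrically in $H^2_\cY$ and its orthogonal complement $\cH(\widetilde K_S)$ inherits the ambient norm. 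Equivalently, $B$ is the inner factor in the Beurling--Lax representation of $\overline{\cN}^{\perp}$ as a shift-invariant subspace (using that $\cN$ is the range of a lower-triangular, hence forward-shift-type, structure); I would phrase it via the isometric inclusion to avoid needing $\cN$ to be exactly shift-invariant.

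Concretely the steps are: (1)~reduce to the $N=0$, $S\equiv 0$ data set and verify $P=M_{F^S}^{[*]}M_{F^S}=\cO_{E,T}^*\cO_{E,T}$, so Theorem~\ref{T:3.4} applies and $M_{F^S}P^{-1/2}$ is an isometry; (2)~write out $\widetilde K_S$ from \eqref{2.12} and read off \eqref{innerB} once $B$ is produced; (3)~produce $B$: since $\frac{I_\cY}{1-z\bar\zeta}-\widetilde K_S(z,\zeta)\succeq 0$ there is $B\colon\D\to\cL(\cY_0,\cY)$ with $\widetilde K_S=K_B$, and since $K_B\succeq 0$, $B\in\cS(\cY_0,\cY)$; (4)~show $B$ is inner, i.e.\ $M_B$ is isometric, by identifying $\cH(K_B)=\cH(\widetilde K_S)=H^2_\cY\ominus\cN$ with the $H^2$-norm (this uses step (1)); (5)~conclude $\cH(\widetilde K_S)=M_B H^2_{\cY_0}$ isometrically, so the parameter $h$ in Theorem~\ref{T:3.4} can be written $h=Bh_0$ with $h_0\in H^2_{\cY_0}$, $\|h_0\|_{H^2_{\cY_0}}=\|h\|_{\cH(\widetilde K_S)}\le\sqrt{1-{\bf x}^*P^{-1}{\bf x}}$, giving \eqref{H2param}; (6)~note uniqueness of $B$ up to a unitary constant on the right, which is the standard uniqueness of the Beurling--Lax inner factor. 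I expect step~(4) — verifying that $B$ is genuinely inner rather than merely contractive — to be the main obstacle, since it is exactly the place where the hypothesis $P=\cO_{E,T}^*\cO_{E,T}$ (equality, not inequality) and the positive-definiteness of $P$ are both essential; everything else is substitution into earlier results.
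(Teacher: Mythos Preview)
Your overall strategy---specialize to $S\equiv 0$, invoke Theorem~\ref{T:3.4}, and then identify the parameter space $\cH(\widetilde K_S)$ explicitly---is a legitimate alternative to the paper's route through the Redheffer colligation of Theorem~\ref{T:AIPsol}. But steps~(3)--(5) contain a genuine mix-up between $\cN$ and its orthogonal complement $\cH(\widetilde K_S)$. The kernel in \eqref{innerB} is $F^S(z)P^{-1}F^S(\zeta)^*=E(I-zT)^{-1}P^{-1}(I-\bar\zeta T^*)^{-1}E^*$, which is the reproducing kernel of $\cN$, \emph{not} of $\cH(\widetilde K_S)=H^2_\cY\ominus\cN$. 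So the identity you actually want is $K_B=(\text{kernel of }\cN)$, giving $\cN=\cH(K_B)$ and hence $\cH(\widetilde K_S)=H^2_\cY\ominus\cH(K_B)=M_B H^2_{\cY_0}$ once $B$ is inner. As written, your step~(4) asserts $\cH(K_B)=\cH(\widetilde K_S)$ while step~(5) asserts $\cH(\widetilde K_S)=M_B H^2_{\cY_0}$; for inner $B$ these two subspaces are orthogonal complements of one another in $H^2_\cY$, so the two steps are contradictory.

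Separately, the inference in step~(3)---that $0\preceq\widetilde K_S\preceq\frac{I_\cY}{1-z\bar\zeta}$ forces $\widetilde K_S$ to be a de Branges--Rovnyak kernel---is not valid in general: for instance the scalar kernel $K(z,\zeta)=z\bar\zeta$ satisfies both bounds but is not of the form $\frac{1-B(z)\overline{B(\zeta)}}{1-z\bar\zeta}$ (setting $z=\zeta=0$ would force $|B(0)|=1$, hence $B$ a unimodular constant, a contradiction). The correct way to produce $B$ is to argue on $\cN$ rather than on $\cH(\widetilde K_S)$: the space $\cN=\operatorname{Ran}\cO_{E,T}$ is closed in $H^2_\cY$ (since $\cO_{E,T}P^{-1/2}$ is isometric) and backward-shift invariant (since $M_z^*\cO_{E,T}x=\cO_{E,T}Tx$), so Beurling--Lax--Halmos supplies an inner $B\in\cS(\cY_0,\cY)$ with $\cN=\cH(K_B)$; matching reproducing kernels yields \eqref{innerB}, and then $\cH(\widetilde K_S)=H^2_\cY\ominus\cN=M_B H^2_{\cY_0}$ isometrically, from which \eqref{H2param} follows exactly as you intend. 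With this correction your approach works and is arguably cleaner than the paper's, which instead builds the colligation \eqref{3.6} explicitly, verifies that $T$ (hence $A$) is strongly stable, and identifies $B$ with $\Sigma_{12}$.
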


%%%%%%%%%%%%%%%%%%%%%%%%%%%%%%%%%%%%
\begin{proof}
As remarked above, we are considering the problem $\AP$ with
data set $\{S, \, T, \, E, \, N, \, {\bf x}\}$ where $S\equiv0$ and $N=0$.
Then, for $x \in \cX$, we have
\begin{align*}
    \langle P x, x \rangle
    =& \lim_{N \to \infty} \left\langle \sum_{n=0}^{N-1} T^{*n} E^{*}E
    T^{n} x, x \right\rangle \\
   = &  \lim_{N \to \infty} \left\langle \sum_{n=0}^{N-1} T^{*n} (P -
    T^{*}P T)T^{n} x, x \right\rangle\\
     =& \langle P x, x \rangle - \lim_{N \to \infty} \| P^{\half} T^{N}
    x \|^{2}.
\end{align*}
and we conclude that $\| P^{\half} T^{N} x\|^{2} \to 0$ as $N \to
\infty$.  The assumption that $P > 0$ implies that $P^{\half}$ is
invertible and we conclude that $\|T^{N}x\|^{2} \to 0$ as well, i.e.,
that $T$ is strongly stable.

The fact that $N=0$ yields that in the construction of the unitary colligation ${\bf U}$
in \eqref{3.3}--\eqref{3.6}, $\cD_V=\cX$, $\De=\cU$ and the isometry
$V$ is defined by the identity $VP^\half=\sbm{P^\half T\\E}$. Moreover, in the
unitary colligation ${\bf U}$ we have $B_1=0$, $D_{11}=0$
and $C_2=0$, and $A$ and $C_{1}$ can be computed explicitly as
$$
A=P^\half TP^{-\half}, \quad C_{1} = E P^{-\half}.
$$
As $T$ is strongly stable, we conclude that $A$ is strongly stable as
well. The unitary colligation $\bU$ then collapses to
\begin{equation}   \label{Uspecial}
 \bU = \begin{bmatrix} A & 0 & B_{2} \\ C_{1} & 0 & D_{12} \\ 0 &
 D_{21} & 0 \end{bmatrix}
\end{equation}
and $\Sigma(z)$ has the form
$$
\Sigma(z) = \begin{bmatrix} \Sigma_{11}(z) & \Sigma_{12}(z) \\
\Sigma_{21}(z) & \Sigma_{22}(z) \end{bmatrix} = \begin{bmatrix} 0 &
D_{12} + zC_{1}(I - zA)^{-1} B_{2} \\
 D_{21} & 0 \end{bmatrix}.
$$
{}From the special form \eqref{Uspecial} of $\bU$, it follows that
$D_{21}$ and $ \begin{bmatrix} A & B_{2} \\ C_{1} & D_{12} \end{bmatrix}$
are unitary. As $A$ is strongly stable, it is then well known that
$\Sigma_{12}$ is inner, $\cO_{C_{1},A}$ maps $\cX$ isometrically into
the de Branges-Rovnyak space $\cH(K_{\Sigma_{12}}) = H^{2}_{\cY}
\ominus \Sigma_{12} H^{2}_{\widetilde \De_{*}}$, and hence the
operator
\begin{equation} \label{unitaryguy}
\begin{bmatrix} M_{\Sigma_{12}} & \cO_{C_{1},A} \end{bmatrix} \colon
    \begin{bmatrix} H^{2}_{\widetilde \Delta_{*}} \\ \cX
    \end{bmatrix} \to H^{2}_{\cY}
\end{equation}
is unitary.
 Note that
the Redheffer transform $\cR_\Si$ reduces to
\[
S(z)=\cR_\Si[\cE](z)=\Si_{12}(z)\cE(z)D_{21}\quad (z\in\BD).
\]
Since $S\equiv 0$, we have $S=\cR_\Si[\cE]$ when $\cE\equiv0$. In fact, because $\Si_{12}$
is inner and $D_{21}$ unitary, the Redheffer transform $\cR_\Si$ is one-to-one, and
thus $\cE\equiv0$ is the only $\cE\in\cS(\De,\De_*)$ with $\cR_\Si[\cE]\equiv0$.
Then $\cH(K_\cE)=H^2_{\De_*}$ and the function $G$ in \eqref{GGa} is equal
to $\Si_{12}$, and thus is inner. To complete the proof, note that $\Gamma=\FS P^{-\half}$
and $\wtil{\bx}=P^{-\half}\bx$, so that
\begin{align*}
\Gamma(z)\wtil{\bx}=&\FS(z) P^{-\half}P^{-\half}\bx\\
=&\cO_{E,T}P^{-1}\bx=E(I-zT)^{-1}P^{-1}x
= C_{1} (I - zA)^{-1} x.
\end{align*}
Thus \eqref{sols} coincides with \eqref{H2param} with $B=\Si_{12}$.
The coisometric property of the  unitary operator \eqref{unitaryguy}
expressed in reproducing kernel form gives us
\begin{align*}
\frac{ I - \Sigma_{12}(z) \Sigma_{12}(\zeta)^{*}}{1 - z
\overline{\zeta}} =& C_{1}(I - zA)^{-1} (I - \overline{\zeta}A^{*})^{-1}C_1^*\\
   =& E (I - zT)^{-1} P^{-1} (I - \overline{\zeta} T^{*})^{-1} E^{*}
\end{align*}
and we see that $B: = \Sigma_{12}$ is determined from the data set as in
\eqref{innerB} in Theorem \ref{T:5.1}.
\end{proof}
%%%%%%%%%%%%%%%%%%%%%%%%%%%%%%%%%%%%

\section{Homogeneous interpolation and Toeplitz kernels}
\label{S:homoint}
    \setcounter{equation}{0}

    Let  $S\in\cS(\cU,\cY)$ be an inner function, i.e.,
$M_S\in\cL(H^2_\cU,H^2_\cY)$ is an isometry.
    Then $M_S H^2_\cU$ is a closed, invariant subspace of the shift
operator $M_z$ on $H^2_\cY$.
    By the Beurling-Lax-Halmos theorem, this is the general form of a
closed shift-invariant
    subspace of $H^2_\cY$. Moreover, the de Branges-Rovnyak space
$\cK_S:=\cH(K_S)$ is the orthogonal
    complement of $M_S H^2_{\cU}$:
    $$
    \cK_S=H^2_\cY\ominus M_S H^2_{\cU}
    $$
    and provides a general form for closed backward shift-invariant
subspaces of $H^2_\cY$. Let, in addition, $B\in\cS(\cW,\cY)$ be inner, so that
we have shift invariant subspaces $M_S H^2_{\cU}$ and $M_B H^2_{\cW}$ and
backward shift invariant subspaces $\cK_S$ and $\cK_B$ of $H^2_\cY$.
Characterizations of the intersections $M_S H^2_{\cU}\cap M_B H^2_{\cW}$
and $\cK_S\cap\cK_B$ in
    terms of $S$ and $B$ are well-known
    (see e.g., \cite{N}). In this section we characterize the space
    \begin{equation}\label{intersect}
    \cM_{S,B}:=\cK_S\cap M_B H^2_{\cW}.
    \end{equation}

Let us introduce the operators $T\in\cL({\mathcal K}_B)$,
$E\in\cL({\mathcal K}_B,\cY)$, and $N\in \cL(\cK_{B},\cU)$ by
    \begin{align}
       &  T \colon h(z) \mapsto \frac{h(z)-h(0)}{z},\quad
        E \colon h \to h(0),\label{defT} \\
      &  N \colon h(z)=\sum_{j\ge 0}h_jz^j\mapsto \sum_{j\ge 0}S_j^*h_j
      \quad\text{where}\quad S(z) = \sum_{j \ge 0} S_{j} z^{j}.
       \label{defn1}
        \end{align}
The operator $T$ is strongly stable (i.e.,
${\displaystyle\lim_{n\to\infty} T^n h=0}$ for each $h\in\cX=\cK_B)$ and
the pair $(E,T)$ is output-stable. With $N$ defined in accordance
with \eqref{Ndef}, the data set ${\mathcal D}=\{S,E,N,T, {\bf x}=0\}$ is
$\AP$-admissible, by Lemma \ref{L:5.2}. Furthermore the adjoint
$\cO_{E,T}^*: \, H^2_{\cY}\to{\mathcal K}_B$  of the observability
operator $\cO_{E,T}$ amounts to the orthogonal projection
$P_{{\mathcal K}_B}$ onto ${\mathcal K}_B$. Indeed, if $h(z)=\sum_{j\ge
0}h_jz^j\in{\mathcal K}_B$, then $ET^jh=h_j$ for $j\ge 0$ and hence
    $$
    (\cO_{E,T}h)(z)=\sum_{j\ge 0}(ET^jh)z^j=\sum_{j\ge 0}h_jz^j=h(z).
    $$
    Therefore, for an $f\in H^2_{\cY}$ we have
    $\; \cO_{E,T}^*f=0 $ if and only if $ f\in
    H^2_\cY\ominus {\mathcal K}_B=M_B H^2_{\cW}$.
    It is now easily checked that
     the space \eqref{intersect} is characterized as
    \begin{equation}
    {\mathcal M}_{S,B}=\left\{f\in{\mathcal K}_S: \;
    \cO_{E,T}^*f=0\right\},\label{hom}
    \end{equation}
    i.e., as the solution set of the (unconstrained) homogeneous problem
    {\bf OAP}$_{\cH(K_S)}$ with the data set $\{S,E,T, {\bf x}=0$\}.
    The operator $P$ defined by formulas \eqref{1.10} now
amounts to the compression of the operator $I_{H^2_\cY}-M_SM_S^*$ to the
subspace ${\mathcal K}_B$:
\begin{equation}
    P=I_{{\mathcal K}_B}-P_{{\mathcal K}_B}M_SM_S^*\vert_{{\mathcal K}_B}.
\label{defP}
\end{equation}
\begin{thm}  \label{T:homoint}
Given inner functions $S \in \cS(\cU, \cY)$ and $B \in \cS(\cW, \cY)$,
let $\Sigma=\sbm{\Sigma_{11} & \Sigma_{12}\\ \Sigma_{21}& \Sigma_{22}}$
be the characteristic function of the unitary colligation ${\bf U}$
associated via formulas \eqref{3.3}--\eqref{3.6} to the tuple
$\{P,T,E,N\}$ given in \eqref{defT}, \eqref{defn1}, \eqref{defP}. Then
the space $\cM_{S,B}$ given by \eqref{intersect}
is given explicitly as
       \begin{equation} \label{IntersectForm}
         \cM_{S,B} = G \cdot \cH(K_{\cE})
        \end{equation}
where $\cE$ is the unique function in $\cS(\cU \oplus \widetilde
\Delta_{*})$ such that $S={\boldsymbol{\mathcal R}}_{\Sigma}[\cE]$
and $G(z) =  \Sigma_{12}(z) (I - \cE(z) \Sigma_{22}(z))^{-1}$.
Furthermore $M_G$ is a unitary operator from $\cH(K_{\cE})$ onto
$\cM_{S,B}$.
\end{thm}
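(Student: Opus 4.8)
The plan is to recognize Theorem~\ref{T:homoint} as a special case of Theorem~\ref{T:AIPsol} applied to the $\AP$-admissible data set $\{S,E,N,T,{\bf x}=0\}$ just constructed, and then to verify that in this setting all the relevant hypotheses are met and the parametrization collapses to the asserted form. First I would recall from the discussion preceding the theorem that, by Lemma~\ref{L:5.2}, the data set is $\AP$-admissible, that $M_{F^S}^{[*]}=\cO_{E,T}^*|_{\cH(K_S)}$, and that the (unconstrained) homogeneous problem $\OAP$ with ${\bf x}=0$ has solution set exactly $\cM_{S,B}$ by \eqref{hom}. Since ${\bf x}=0$ we trivially have $P\ge{\bf x}{\bf x}^*$, so Theorem~\ref{T:AIPsol} applies; moreover, because $P=M_{F^S}^{[*]}M_{F^S}$ holds by construction \eqref{1.10}, Proposition~\ref{P:ContrMult} gives that $M_\Ga$ is an isometry and $\begin{bmatrix}M_G & M_\Ga\end{bmatrix}$ is coisometric.

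Next I would read off \eqref{sols} and \eqref{OrthoRelation} with $\wtil{\bf x}=0$: every solution $f$ to the unconstrained homogeneous problem has the form $f=M_G h$ with $h\in\cH(K_\cE)$ arbitrary (the norm bound $\sqrt{1-\|\wtil{\bf x}\|^2}=1$ disappears once we drop the ball constraint, exactly as in Remark~\ref{R:2.6}), and $\|f\|^2_{\cH(K_S)}=\|P_{\cH(K_\cE)\ominus\ker M_G}h\|^2$. This already yields $\cM_{S,B}=G\cdot\cH(K_\cE)$ as a set, i.e.\ \eqref{IntersectForm}, and shows $M_G$ maps $\cH(K_\cE)$ onto $\cM_{S,B}$. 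It remains to upgrade $M_G$ from a partial isometry to a \emph{unitary} from $\cH(K_\cE)$ onto $\cM_{S,B}$, which by the norm identity above amounts to showing $\ker M_G=\{0\}$.

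The main obstacle is precisely the injectivity of $M_G$. Here I would invoke Theorem~\ref{T:InjAndIso} (or its relaxation, Lemma~\ref{L:8.4}): the operator $T$ defined in \eqref{defT} is the backward shift compressed to $\cK_B$, which is strongly stable, so $T^*$ --- the compression of the forward shift $M_z$ --- is injective (if $T^*h=P_{\cK_B}(zh)=0$ then $zh\in M_BH^2_\cW$, and since $M_BH^2_\cW$ is shift-invariant with $zh$ in it one checks $h\in M_BH^2_\cW$, forcing $h=0$ in $\cK_B$); alternatively one notes $T^*$ satisfies \eqref{8.30} since $\cap_{k\ge1}\Ran(T^*)^k=\{0\}$ for the compressed shift on a backward-shift-invariant subspace. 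Theorem~\ref{T:InjAndIso} then gives that $\Red_\Sigma$ is injective --- hence the $\cE$ with $S=\Red_\Sigma[\cE]$ is unique --- and that $M_G\colon\cH(K_\cE)\to\cH(K_S)$ has trivial kernel; combined with $P=M_{F^S}^{[*]}M_{F^S}$, $M_G$ is an isometry. An isometry with range $\cM_{S,B}$ is a unitary onto $\cM_{S,B}$, completing the proof. The only genuinely new verification beyond quoting earlier results is the injectivity of $T^*$ (equivalently condition \eqref{8.30}) for the specific $T$ in \eqref{defT}, which follows from strong stability of $T$ together with the shift-invariance of $M_BH^2_\cW$.
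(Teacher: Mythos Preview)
Your overall plan---apply Theorem~\ref{T:AIPsol} with $\bx=0$ to get \eqref{IntersectForm}, then show $\ker M_G=\{0\}$ to upgrade the partial isometry $M_G$ to a unitary---is exactly the paper's route. The gap is in your argument for $\ker M_G=\{0\}$: both of the claims you make about $T^*$ are false.

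First, $T^*=P_{\cK_B}M_z|_{\cK_B}$ is \emph{not} injective in general. Your deduction ``$zh\in M_BH^2_\cW$ and shift-invariance of $M_BH^2_\cW$ give $h\in M_BH^2_\cW$'' runs in the wrong direction: forward-shift-invariance says $g\in M_BH^2_\cW\Rightarrow zg\in M_BH^2_\cW$, not the converse. Concretely, take $B(z)=z$ (scalar case): then $\cK_B$ consists of the constants, $T=M_z^*|_{\cK_B}=0$, and hence $T^*=0$. Second, your alternative claim $\bigcap_{k\ge1}\Ran(T^*)^k=\{0\}$ also fails: for $B$ a single Blaschke factor at $a\ne0$ one has $\cK_B=\C\cdot k_a$ and $T k_a=\bar a\,k_a$, so $T^*=a\,I$ and $\bigcap_k\Ran(T^*)^k=\cK_B\ne\{0\}$. (In that example condition~\eqref{8.30} happens to hold because $\ker T^*=\{0\}$, but your stated reason is wrong.) Whether the full condition~\eqref{8.30} holds for the compressed shift associated with an arbitrary inner $B$ is not obvious, and nothing in your argument establishes it; strong stability of $T$ by itself does not imply either of your two claims.

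The paper does not try to verify~\eqref{8.30} here. Instead it invokes a structural fact specific to this data set, taken from \cite[Proposition~5.9]{khy}: with $T,E,N,P$ as in \eqref{defT}--\eqref{defP} one has a factorization $\Sigma_{12}(z)=B(z)\widehat\Sigma_{12}(z)$ with $\widehat\Sigma_{12}$ a $*$-outer Schur-class function. The $*$-outer property of $\widehat\Sigma_{12}$ forces $M_{\Sigma_{12}}$, and hence $M_G=M_{\Sigma_{12}}(I-M_\cE M_{\Sigma_{22}})^{-1}$, to be injective on $\operatorname{Hol}_{\widetilde\Delta_*}(\BD)$. Uniqueness of $\cE$ (injectivity of $\Red_\Sigma$) is likewise obtained by citing \cite{khdep} and \cite[Theorem~5.8]{khy} rather than via Theorem~\ref{T:InjAndIso}.
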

\begin{proof}
The parametrization formula \eqref{IntersectForm} follows from
\eqref{hom} upon applying Theorem \ref{T:AIPsol}. The fact that
in the present situation the Redheffer transform ${\boldsymbol{\mathcal R}}_{\Sigma}$
is one-to-one was
established in \cite{khdep} (see also \cite[Theorem 5.8]{khy}).
Thus the parameter $\cE$ such that $S={\boldsymbol{\mathcal
R}}_{\Sigma}[\cE]$ is uniquely determined. It is also shown in
\cite[Proposition 5.9]{khy} that
    \begin{equation}\label{Si12}
    \Sigma_{12}(z)=B(z)\widehat{\Sigma}_{12}(z)
    \end{equation}
    where $\widehat{\Sigma}_{12}$ is a $*$-outer function in
    $\cS(\widetilde{\Delta}_*, \cY)$.
    {}From this identity and the definition of $\cM_{S,B}$
    we see directly that
    $\cM_{S,B}$ is contained in $M_B H^{2}_{\cW}$.  Secondly, we see from
the
    $*$-outer property of $\widehat\Sigma_{12}$ and the factorization
    \eqref{Si12} of $\Sigma_{12}$ that the operator of multiplication by
    $G(z) = \Sigma_{12}(z) (I - \cE(z) \Sigma_{22}(z))^{-1}$ is injective.
    Since we know that $M_G$ is a partial isometry, it now follows that
    ${M}_{G} \colon \cH(K_{\cE}) \to \cM_{S,B}$ is unitary.
    \end{proof}

    \begin{rem}
    {\rm One gets the same parametrization of ${\mathcal M}_{S,B}$ in case
    $S\in\cS(\cU,\cY)$ is not inner. }
    \label{R:1}
    \end{rem}

    As the following result indicates, spaces of the form ${\mathcal
    M}_{S,B}$ come up in the description of kernels of Toeplitz
    operators.  To formulate the result let us say that the triple
    $(S, B, \Gamma)$ is an {\em admissible triple} if
    \begin{enumerate}
	\item $S$ and $B$ in ${\mathcal S}(\cY)$  with $\cY$ finite-dimensional
	are {\em inner} (i.e.,
	$S$ and $B$ assume unitary values almost everywhere on the
	unit circle ${\mathbb T}$), and
	\item $\Gamma \in (H^{\infty}_{\cL(\cY)})^{\pm 1}$, i.e., both
	$\Gamma$ and $\Gamma^{-1}$ are in $H^{\infty}_{\cL(\cY)}$.
   \end{enumerate}
   We also need the following result from \cite{Barclay}.

   \begin{thm} \label{T:Barclay} (See \cite[Theorem
       4.1]{Barclay}.)
       Let $\epsilon > 0$ and suppose that $\cY \cong {\mathbb
       C}^{n}$ is a finite-dimensional coefficient Hilbert space.
       Suppose also that $\Phi_{u} \in
       L^{\infty}_{\cL(\cY)}$  has unitary values almost
       everywhere on ${\mathbb T}$.  Then there exists almost
       everywhere invertible functions $L,K \in
       H^{\infty}_{\cL(\cY)}$ with $L^{-1}, K^{-1}$ in
       $L^{\infty}_{\cL(\cY)}$ such that
       $$
       \Phi_{u} = L^{*}K \text{ almost everywhere on } {\mathbb T}
       $$
       and such that
       $$
        \| L\|_{\infty}, \|K\|_{\infty}, \|L^{-1}\|_{\infty},
	\|K^{-1}\|_{\infty} < 1 + \epsilon.
       $$
       \end{thm}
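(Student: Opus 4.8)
The plan is to reduce the statement to one clean factorization problem, reach that problem by a matrix Douglas--Rudin approximation combined with an iteration, and then isolate the one genuinely hard step --- extracting the quantitative $1+\epsilon$ bound from a near-identity residual.

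\emph{Reformulation.} Since $\Phi_u(e^{it})$ is unitary a.e.\ on $\T$, it suffices to produce $L\in H^\infty_{\cL(\cY)}$, invertible a.e.\ on $\T$ with $L^{-1}\in L^\infty_{\cL(\cY)}$, such that
\[
K:=(L^*)^{-1}\Phi_u\in H^\infty_{\cL(\cY)},\qquad \|L\|_\infty<1+\epsilon,\qquad \|L^{-1}\|_\infty<1+\epsilon .
\]
Indeed, then $L^*K=\Phi_u$ by construction, $K^{-1}=\Phi_u^*L^*\in L^\infty_{\cL(\cY)}$, and, because $\Phi_u$ and $\Phi_u^*$ have operator norm $1$ a.e., $\|K\|_\infty\le\|L^{-1}\|_\infty<1+\epsilon$ and $\|K^{-1}\|_\infty\le\|L\|_\infty<1+\epsilon$. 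Thus everything reduces to finding a ``$(1+\epsilon)$-almost unitary'' $L\in H^\infty_{\cL(\cY)}$ for which $\Phi_u^*L$ is co-analytic and bounded --- i.e.\ an almost-unitary element of the Toeplitz kernel attached to $\Phi_u^*$, which is exactly the kind of object studied in this paper.

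\emph{Reduction to a near-identity residual.} I would invoke the matrix-valued Douglas--Rudin theorem, available because $\cY\cong\C^{n}$ is finite-dimensional: every unitary-valued $\Psi\in L^\infty_{\cL(\cY)}$ is a uniform limit of boundary functions $B_1^*B_2$ with $B_1,B_2$ (square) finite Blaschke--Potapov products in $\cS(\cY)$. For a product $\Psi=B_1^*B_2$ the desired factorization is immediate and sharp: $L=B_1$, $K=B_2$ lie in $H^\infty_{\cL(\cY)}$, with $L^{-1}=B_1^*$ and $K^{-1}=B_2^*$ a.e.\ on $\T$, so $\|L^{\pm1}\|_\infty=\|K^{\pm1}\|_\infty=1$. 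Now iterate: starting from $L_0=K_0=I$, suppose after $n$ steps we have inner $L_n,K_n\in H^\infty_{\cL(\cY)}$ (so all four sup-norms equal $1$) with $\|L_n^*K_n-\Phi_u\|_\infty$ small; form the residual $\Psi_n:=L_n\Phi_uK_n^*$, again unitary a.e.; apply Douglas--Rudin to choose inner $B_1^{(n)},B_2^{(n)}$ with $\|(B_1^{(n)})^*B_2^{(n)}-\Psi_n\|_\infty<\eta_n$; and set $L_{n+1}=B_1^{(n)}L_n$, $K_{n+1}=B_2^{(n)}K_n$. Then $L_{n+1},K_{n+1}$ are again inner, so all four norms stay $1$, and using $L_n^*L_n=K_nK_n^*=I$ a.e.\ one computes $L_{n+1}^*K_{n+1}-\Phi_u=L_n^*\big((B_1^{(n)})^*B_2^{(n)}-\Psi_n\big)K_n$, of norm $<\eta_n$. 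Choosing $\eta_n\downarrow 0$, after finitely many steps we obtain inner $L_N,K_N$ of norm $1$ and a unitary-valued residual $\Psi_N=L_N\Phi_uK_N^*$ with $\|\Psi_N-I\|_\infty$ as small as we wish. If one can factor \emph{this} near-identity unitary as $\Psi_N=\ell^*k$ with $\ell,k\in H^\infty_{\cL(\cY)}$ invertible a.e.\ and $\|\ell^{\pm1}\|_\infty,\|k^{\pm1}\|_\infty<1+\epsilon$, then $\Phi_u=L_N^*\Psi_NK_N=(\ell L_N)^*(kK_N)$, and $L:=\ell L_N$, $K:=kK_N$ finish the proof: $\|L\|_\infty\le\|\ell\|_\infty\|L_N\|_\infty<1+\epsilon$, $\|L^{-1}\|_\infty\le\|L_N^*\|_\infty\|\ell^{-1}\|_\infty<1+\epsilon$, and likewise for $K$ --- the inner factors contribute norm exactly $1$ and cost nothing.

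\emph{The crux.} What remains is the near-identity factorization, and this is where the real work lies. The naive route --- writing $\Psi=e^{iH}$ with $H=H^*\in L^\infty_{\cL(\cY)}$, $\|H\|_\infty$ small, and seeking $\ell=e^{A}$, $k=e^{B}$ with $A,B\in H^\infty_{\cL(\cY)}$ --- fails: it forces $A$ to be essentially the analytic (Riesz) projection of $-iH$, and the Riesz projection is unbounded on $L^\infty$, so $e^{A}$ need not even be bounded; equivalently, $H^\infty+\overline{H^\infty}$ is not closed in $L^\infty$ and does not contain $iH$ in general. The correct argument has to exploit the $(1+\epsilon)$ of slack: either solve a $\bar\partial$-problem (or a Nehari/commutant-lifting extremal problem) for the Toeplitz kernel of $\Psi^*$ with the error absorbed by the slack, or run a quantitative Douglas--Rudin iteration with geometrically decaying errors and sum the resulting infinite cocycle of inner factors in a topology in which it genuinely converges to an $L$ that is still invertible a.e.\ on $\T$ --- not merely a weak-$*$ limit of inner functions, which could degenerate (e.g.\ to $0$). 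Making this convergence quantitative, with $\|L^{\pm1}\|_\infty$ pinned below $1+\epsilon$, is precisely the content of Barclay's Theorem~4.1 and is the step I expect to be the main obstacle; in a self-contained treatment one would carry out this perturbative estimate in detail, whereas here it is simply quoted.
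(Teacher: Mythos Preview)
The paper does not prove this theorem at all; it is quoted verbatim from Barclay's paper \cite{Barclay} and used as a black box in the proof of Theorem \ref{T:Toeplitzkernel}. So there is no ``paper's own proof'' to compare against --- the comparison you are implicitly making is with Barclay's argument itself.

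As a sketch of the strategy behind such results, your outline is broadly in the right spirit (reduce via inner approximants to a near-identity residual, then handle that residual perturbatively), and you are honest about where the genuine difficulty lies. Two remarks are worth making. First, your iteration is more elaborate than necessary: a single application of the matrix Douglas--Rudin approximation already yields inner $L_1,K_1$ with $\|L_1\Phi_uK_1^*-I\|_\infty$ arbitrarily small, so the recursion adds nothing. Second, and more seriously, you should be careful about what you are allowed to assume: the matrix-valued Douglas--Rudin approximation theorem (density of $B_1^*B_2$ with $B_1,B_2$ Blaschke--Potapov products) is not a classical fact predating Barclay --- it is itself one of the contributions of \cite{Barclay}, established there jointly with the factorization statement you are trying to prove. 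Invoking it as an independent input risks circularity. In the scalar case the situation is different (Douglas--Rudin approximation is classical and Bourgain's factorization builds on it), but in the matrix case the two results are essentially proved together.

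Your own final paragraph correctly identifies that the near-identity factorization with quantitative $(1+\epsilon)$ control is the entire content of the theorem, and that you have not supplied it. What you have written is therefore a reduction of the theorem to its own core, not a proof. For the purposes of the present paper this is moot: the result is cited, not proved, and your write-up would be appropriate only as expository commentary on why the cited result is nontrivial.
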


       For $\Phi$ a function in $L^{\infty}_{\cL(\cY)}$, the
       associated Toeplitz operator $T_{\Phi}$ on  $H^{2}_{\cL(\cY)}$
       is defined by
       $$
        T_{\Phi}(f) =  P_{H^{2}_{\cY}}(\Phi \cdot f).
       $$
       We consider such operators only for the case where $\Phi$ is
       invertible almost everywhere on the unit circle and in addition
       $\det \Phi^{*}\Phi$ is log-integrable:
       $$
       \int_{{\mathbb T}} \det\left( \Phi(\zeta)^{*} \Phi(\zeta)
       \right) \, |\tt{d}\zeta| > -\infty.
       $$

        We are now ready to state our result concerning Toeplitz
       kernels.  Here we use the notation $L^{\infty}_{\cL(\cY)}$ to
       denote the space of essentially uniformly bounded measurable
       $\cL(\cY)$-valued functions on the unit circle ${\mathbb T}$.

       \begin{thm} \label{T:Toeplitzkernel}
	   Let the coefficient Hilbert space $\cY$ be
	   finite-dimensional.
	   A subspace $\cM \subset H^{2}_{\cY}$ has the form $\cM =
	   \operatorname{Ker} T_{\Phi}$ for some $\Phi \in
	   L^{\infty}_{\cL(\cY)}$ with $\det \Phi^{*} \Phi$
	   log-integrable on ${\mathbb T}$ if and only if there is an
	   admissible triple $(S, B, \Gamma)$ so that $\cM$ has the
	   form
	   $$\cM = \Gamma B^{-1} \cdot \cM_{S,B}: = \Gamma B^{-1}
	   \cdot (\cK_{S} \cap M_{B} H^{2}_{\cY}).
	   $$
	   \end{thm}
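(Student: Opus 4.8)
The statement is an equivalence, which I would establish by two rather different arguments. The ``if'' direction is a direct computation identifying $\cM_{S,B}$ with a Toeplitz kernel and tracking how the prefactors $B^{-1}$ and $\Gamma$ act; the ``only if'' direction rests on the structure of $\operatorname{Ker}T_{\Phi}$ as a nearly backward-shift-invariant subspace, sharpened to a \emph{bi-bounded} presentation by means of Barclay's Theorem~\ref{T:Barclay}.

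\emph{The ``if'' direction.} Let $(S,B,\Gamma)$ be an admissible triple; since $S,B\in\cS(\cY)$ we have $\cU=\cW=\cY$. As $S$ is inner, $f\in\cK_{S}=H^{2}_{\cY}\ominus M_{S}H^{2}_{\cY}$ exactly when $S^{*}f\perp H^{2}_{\cY}$, i.e.\ $\cK_{S}=\operatorname{Ker}T_{S^{*}}$. Writing a generic element of $M_{B}H^{2}_{\cY}$ as $Bg$ with $g\in H^{2}_{\cY}$, the condition $Bg\in\operatorname{Ker}T_{S^{*}}$ reads $P_{H^{2}_{\cY}}(S^{*}Bg)=0$, that is $g\in\operatorname{Ker}T_{S^{*}B}$; hence
\[
\cM_{S,B}=\cK_{S}\cap M_{B}H^{2}_{\cY}=M_{B}\,\operatorname{Ker}T_{S^{*}B},
\qquad\text{so}\qquad
B^{-1}\cM_{S,B}=\operatorname{Ker}T_{S^{*}B}.
\]
Since $\Gamma^{-1}\in H^{\infty}_{\cL(\cY)}$ one has $T_{S^{*}B\Gamma^{-1}}=T_{S^{*}B}T_{\Gamma^{-1}}$, with $T_{\Gamma^{-1}}$ boundedly invertible on $H^{2}_{\cY}$ (inverse $T_{\Gamma}=M_{\Gamma}$), so that
\[
\operatorname{Ker}T_{S^{*}B\Gamma^{-1}}=M_{\Gamma}\operatorname{Ker}T_{S^{*}B}=\Gamma B^{-1}\cM_{S,B}.
\]
Finally $\Phi:=S^{*}B\Gamma^{-1}\in L^{\infty}_{\cL(\cY)}$, and since $S$ and $B$ are unitary-valued a.e.\ on $\T$ we get $\Phi^{*}\Phi=(\Gamma^{-1})^{*}\Gamma^{-1}$ a.e., whence $\det(\Phi^{*}\Phi)=|\det\Gamma|^{-2}$ is bounded and bounded away from $0$ (because $\Gamma,\Gamma^{-1}\in H^{\infty}_{\cL(\cY)}$), hence log-integrable on $\T$. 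Thus $\Gamma B^{-1}\cM_{S,B}=\operatorname{Ker}T_{\Phi}$ with $\Phi$ of the required type.

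\emph{The ``only if'' direction.} Let $\Phi\in L^{\infty}_{\cL(\cY)}$ with $\det(\Phi^{*}\Phi)$ log-integrable; then $\Phi(\zeta)$ is invertible for a.e.\ $\zeta$. By the identities just obtained it suffices to exhibit an admissible triple $(S,B,\Gamma)$ with $\operatorname{Ker}T_{\Phi}=\operatorname{Ker}T_{S^{*}B\Gamma^{-1}}$. I would first use that $\Phi^{*}\Phi$ is positive, bounded, and has log-integrable determinant to produce, by Wiener--Masani matrix spectral factorization, an outer $R\in H^{\infty}_{\cL(\cY)}$ with $R^{*}R=\Phi^{*}\Phi$ a.e.; then $\Phi_{u}:=\Phi R^{-1}$ is unitary-valued a.e.\ and, since $R\in H^{\infty}_{\cL(\cY)}$, $T_{\Phi}=T_{\Phi_{u}}M_{R}$. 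Next, the subspace $\operatorname{Ker}T_{\Phi_{u}}$ is nearly backward-shift-invariant, and the structure theory of such subspaces (Hitt--Sarason together with its operator-valued refinements, where $\dim\cY<\infty$ is used) presents it as $F\cdot\cK$ with $\cK$ a backward-shift-invariant subspace --- matching, after the appropriate inner-function bookkeeping, a space of the form $\cK_{S}\cap M_{B}H^{2}_{\cY}$ transformed by $B^{-1}$ --- and $F$ an isometric multiplier. The crucial point, and this is where Barclay's Theorem~\ref{T:Barclay} is invoked (its hypotheses being met precisely because $\dim\cY<\infty$ and $\det\Phi^{*}\Phi$ is log-integrable), is that $F$ together with the residual factors coming from $R$ can be organized into a single $\Gamma\in(H^{\infty}_{\cL(\cY)})^{\pm1}$. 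With $S,B$ inner and $\Gamma$ bi-bounded in hand one obtains $\operatorname{Ker}T_{\Phi}=\Gamma B^{-1}\cM_{S,B}$, the space $\cM_{S,B}$ being as described by Theorem~\ref{T:homoint}. Throughout, the composition law $T_{FG}=T_{F}T_{G}$ for $G\in H^{\infty}_{\cL(\cY)}$ and the injectivity of Toeplitz operators with outer symbol are used to track how these manipulations transform $\operatorname{Ker}T_{\Phi}$.

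\emph{Main obstacle.} The whole difficulty of the ``only if'' direction is extracting the inner functions $S,B$ and certifying that what remains is genuinely invertible in $H^{\infty}_{\cL(\cY)}$ rather than merely an isometric multiplier between de Branges--Rovnyak spaces; equivalently, that the Hitt--Sarason/Hayashi-type rigidity singling out those nearly-invariant subspaces which are Toeplitz kernels can, under the present hypotheses, be upgraded to genuine bi-boundedness. There is no inner--outer factorization directly available for the non-analytic $L^{\infty}$-symbol $\Phi$ (or $\Phi_{u}$), so $S$ and $B$ must be recovered from the lattice of invariant and nearly-invariant subspaces; it is exactly the combination of log-integrability of $\det\Phi^{*}\Phi$ with $\dim\cY<\infty$ that, via Barclay's Theorem~\ref{T:Barclay}, supplies the bi-bounded residual factor $\Gamma$ and thereby makes the triple $(S,B,\Gamma)$ admissible. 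Arranging this extraction cleanly --- so that both $S$ and $B$ remain inner while the leftover is absorbed into one element of $(H^{\infty}_{\cL(\cY)})^{\pm1}$ --- is the crux.
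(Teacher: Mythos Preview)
Your ``if'' direction is correct and is essentially the paper's argument in slightly different packaging: you take $\Phi = S^{*}B\Gamma^{-1}$ directly, whereas the paper renormalizes to a unitary-valued symbol $\Phi_{u} = L_{o}^{*} S^{*} B \Gamma^{-1}$ via an extra outer spectral factor; either choice yields a symbol with log-integrable $\det\Phi^{*}\Phi$ and the same kernel.

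Your ``only if'' direction, however, has a genuine gap, precisely at the point you yourself flag as the crux. Two concrete issues.

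First, you factor $\Phi^{*}\Phi = R^{*}R$ (a \emph{right} spectral factor) and write $\Phi = \Phi_{u} R$, obtaining $T_{\Phi}=T_{\Phi_{u}}M_{R}$ and hence $\operatorname{Ker} T_{\Phi} = M_{R}^{-1}(\operatorname{Ker} T_{\Phi_{u}})$. The outer factor $R$ is now entangled in the kernel description; since $R^{-1}$ is in general only Smirnov-class, you cannot absorb it into $\Gamma$. The paper factors $\Phi\Phi^{*} = F^{*}F$ on the \emph{left}, writes $\Phi = F^{*}\Phi_{u}$, and observes (using that $F^{-1}$ is outer of Nevanlinna class, so $F^{*-1}H^{2\perp}_{\cY}\cap L^{2}_{\cY}=H^{2\perp}_{\cY}$) that $\operatorname{Ker} T_{\Phi} = \operatorname{Ker} T_{\Phi_{u}}$ outright --- the left outer factor simply disappears.

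Second, and more importantly, your route through nearly-invariant-subspace theory does not explain how Barclay's theorem actually enters. The Hitt--Sarason/Chevrot structure gives $\operatorname{Ker} T_{\Phi_{u}} = F\cdot\cK_{\theta}$ with $F$ an isometric multiplier and $\theta$ inner; no second inner function $B$ appears, and upgrading $F$ to a bi-bounded $\Gamma$ is exactly what Hayashi's $H^{1}$-exposed-point criterion addresses --- and that is \emph{not} a consequence of Barclay. The paper bypasses nearly-invariant machinery altogether: it applies Barclay's Theorem~\ref{T:Barclay} \emph{directly to $\Phi_{u}$} to obtain $\Phi_{u}=L^{*}K$ with $L,K\in H^{\infty}_{\cL(\cY)}$ and $L^{-1},K^{-1}\in L^{\infty}_{\cL(\cY)}$, then takes inner--outer factorizations $L=L_{i}L_{o}$, $K=K_{i}K_{o}$, and computes by elementary manipulations that
\[
\operatorname{Ker} T_{\Phi_{u}} \;=\; K_{o}^{-1}K_{i}^{-1}\,\cM_{L_{i},K_{i}}.
\]
The admissible triple is $(S,B,\Gamma)=(L_{i},K_{i},K_{o}^{-1})$; the key point is that $K_{o}^{-1}\in (H^{\infty}_{\cL(\cY)})^{\pm 1}$ because $K_{o}$ is outer and $K_{o}^{-1}=K^{-1}K_{i}\in L^{\infty}_{\cL(\cY)}$. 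Thus $S$, $B$, and the bi-bounded $\Gamma$ fall out of inner--outer factorization of the Barclay factors, not from any nearly-invariant structure theorem.
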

	
	   \begin{proof}
	       Suppose that $\Phi \in L^{\infty}_{\cL(\cY)}$ with
	       $\det \Phi^{*} \Phi$ log-integrable.  Then there
	       exists an outer function $F \in H^{\infty}_{\cL(\cY)}$
	       solving the spectral factorization problem
	       $$
	       \Phi(\zeta) \Phi(\zeta)^{*} = F(\zeta)^{*} F(\zeta)
	       \text{ almost everywhere on } {\mathbb T}
	       $$
 (see e.g.~\cite{RR}).  If we set $\Phi_{u} : = F^{*-1} \Phi$, then
 $\Phi_{u}$ is unitary-valued on ${\mathbb T}$ and we have the
 factorization $\Phi = F^{*} \Phi_{u}$.  By Theorem \ref{T:Barclay},
 we may factor $\Phi_{u}$ as
 $$
    \Phi_{u} = L^{*} K
  $$
  with $L, K \in H^{\infty}_{\cL(\cY)}$ and $L^{-1}, K^{-1}\in L^{\infty}_{\cL(\cY)}$.  Let $L = L_{i}
  L_{o}$ and $K = K_{i} K_{o}$ be the inner-outer factorizations of
  $L$ and $K$ (again we refer to \cite{RR} for details on
  matrix-valued Hardy space theory).  Then $\Phi$ has the
  representation
  $$
   \Phi = F^{*} L_{o}^{*} L_{i}^{*} K_{i} K_{o}.
 $$

 Suppose now that $f \in H^{2}_{\cY}$ is in $\operatorname{Ker}
 T_{\Phi}$.  This condition can be equivalently written as
 $$
   F^{*} L_{o}^{*} L_{i}^{*} K_{i} K_{o} f \in H^{2 \perp}_{\cY},
 $$
or
$$
 L_{o}^{*} L_{i}^{*} K_{i} K_{o} f \in F^{* -1} H^{2 \perp}_{\cY}
 \cap L^{2}_{\cY}.
$$
Since $F^{-1}$ is an outer Nevanlinna-class function, it follows that
$F^{*-1} H^{2 \perp}_{\cY} \cap L^{2}_{\cY} = H^{2 \perp}_{\cY}$ and
we are left with
$$
L_{o}^{*} L_{i}^{*} K_{i} K_{o} f \in H^{2 \perp}_{\cY}.
$$
By a similar argument (even easier since $L_{o}^{-1} $ is bounded),
we deduce that, equivalently,
$\;   L_{i}^{*} K_{i} K_{0} f \in H^{2 \perp}_{\cY}$, or
$$
K_{i}K_{o} f \in L_{i} H^{2 \perp}_{\cY}.
$$
As $K_{i}K_{o}f \in H^{2}_{\cY}$, we actually have
\begin{equation}   \label{*}
 K_{i} K_{o} f \in L_{i} H^{2 \perp}_{\cY} \cap H^{2}_{\cY} =
 \cK_{L_{i}}.
\end{equation}
Clearly $K_{i}K_{o}f \in K_{i} H^{2}_{\cY}$ and hence \eqref{*} takes
the sharper form
$$
  K_{i} K_{o} f \in \cK_{L_{i}} \cap K_{i} H^{2}_{\cY} =: \cM_{L_{i},
  K_{i}}.
$$
Solving for $f$ gives
$$
   f \in K_{o}^{-1} K_{i}^{-1} \cM_{L_{i}, K_{i}} = \Gamma B^{-1}
   \cdot \cM_{S,B}
$$
where we set $(S, B, \Gamma)$ equal to the admissible triple $(L_{i},
K_{i}, K_{o}^{-1})$.  Conversely, all the steps are reversible:  if
$f \in K_{o}^{-1} K_{i}^{-1} \cM_{L_{i}, K_{i}}$, then $f \in
\operatorname{Ker} T_{\Phi}$.

Conversely, suppose that $(S, B, \Gamma)$ is any admissible triple.
Define $L_{o} \in (H^{\infty}_{\cL(\cY)})^{\pm 1}$ as any outer
solution of the spectral factorization problem
$$
 L_{o} L_{o}^{*}  = S^{*} B \Gamma^{*} \Gamma B^{*} S
$$
and set $\Phi_{u} = L_{o}^{*} S^{*} B \Gamma^{-1}$.  Then one can
check that $\Phi_{u}$ is even unitary-valued on ${\mathbb T}$ and
that $\operatorname{Ker} T_{\Phi_{u}} = \Gamma B^{-1} \cdot \cM_{S,
B}$.
\end{proof}

Theorem \ref{T:Toeplitzkernel} combined with Theorem \ref{T:homoint}
leads to the following Corollary, where the free-parameter space
$\cM_{S,B}$ in Theorem \ref{T:Toeplitzkernel} is replaced by the
arguably easier free-parameter space $\cH(K_{\cE})$.

    \begin{cor}  \label{C:Toeplitzkernel} Assume that the
	coefficient Hilbert space $\cY \cong {\mathbb C}^{n}$ has
	finite dimension.   A subspace $\cM
	\subset H^{2}_{\cY}$ is a
    Toeplitz kernel, i.e.,  $\cM = \operatorname{Ker}T_{\Phi}$, for an
$L^{\infty}_{\cL(\cY)}$-function $\Phi$ pointwise-invertible on ${\mathbb T}$
with $\det \Phi^{*} \Phi$ log-integrable
    if and only if  there is a
        function $\Gamma \in (H^{\infty}_{\cL(\cY)})^{\pm 1}$,
     inner functions $S$ and $B$ in $\cS(\cY)$, a function $\cE$ in the Schur
class $\cS(\cW, {\mathcal V})$ for some auxiliary Hilbert spaces $\cW$
and ${\mathcal V}$,
       and a function $G \colon {\mathbb D} \to \cL(\cW, {\mathbb C})$
       such that ${M}_{G} \colon g(z) \mapsto G(z) g(z)$ maps
       the de Branges-Rovnyak space $\cH(K_\cE)$ isometrically onto
       $ (H^{2} \ominus S \cdot H^{2}) \cap B \cdot H^{2}$, so that
       $$
       \cM = \Gamma B^{-1} G \cdot \cH(K_{\cE}).
       $$
       Here the function $G$ can be constructed explicitly from the pair
       $(S,B)$ by applying the construction in Theorem
\ref{T:homoint}.
      In particular, there exist auxiliary coefficient Hilbert spaces
       $\cW$ and ${\mathcal V}$ of dimension at most equal to
$\operatorname{dim}
       (H^{2} \ominus S \cdot H^{2}) + 1$ and a function $\Sigma = \left[
       \begin{smallmatrix} \Sigma_{11} & \Sigma_{12} \\ \Sigma_{21} &
           \Sigma_{22} \end{smallmatrix} \right] \in \cS(\cY
           \oplus \cW, \cY \oplus {\mathcal V})$ so that
       \begin{align*}
           & G(z) = \Sigma_{12}(z) (I - \cE(z) \Sigma_{22}(z))^{-1}  \\
           & S(z) = \Sigma_{11}(z) + \Sigma_{12}(z) (I - \cE(z)
           \Sigma_{22}(z))^{-1} \cE(z) \Sigma_{21}(z).
        \end{align*}
        \end{cor}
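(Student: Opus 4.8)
The plan is to obtain the corollary by combining Theorem~\ref{T:Toeplitzkernel} with Theorem~\ref{T:homoint}: the former identifies the Toeplitz kernels in question with the subspaces $\Gamma B^{-1}\cdot\cM_{S,B}$ attached to admissible triples $(S,B,\Gamma)$, and the latter rewrites $\cM_{S,B}$ explicitly as $G\cdot\cH(K_\cE)$ with $M_G\colon\cH(K_\cE)\to\cM_{S,B}$ unitary. No new analytic estimate is needed -- the two statements only have to be glued together, and then the dimensions of the auxiliary spaces must be controlled.

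For the ``only if'' direction, let $\cM=\operatorname{Ker}T_\Phi$ with $\Phi\in L^\infty_{\cL(\cY)}$ pointwise invertible a.e.\ on $\T$ and $\det\Phi^*\Phi$ log-integrable. Theorem~\ref{T:Toeplitzkernel} provides an admissible triple $(S,B,\Gamma)$ (so $S,B\in\cS(\cY)$ are inner and $\Gamma\in(H^\infty_{\cL(\cY)})^{\pm1}$) with $\cM=\Gamma B^{-1}\cdot\cM_{S,B}$, where $\cM_{S,B}=\cH(K_S)\cap M_B H^2_\cY$. Applying Theorem~\ref{T:homoint} to the inner pair $(S,B)$ produces the unitary colligation $\bU$ built via \eqref{3.3}--\eqref{3.6} from the tuple $\{P,T,E,N\}$ of \eqref{defT}, \eqref{defn1}, \eqref{defP}, its characteristic function $\Sigma=\sbm{\Sigma_{11}&\Sigma_{12}\\\Sigma_{21}&\Sigma_{22}}$, the unique $\cE\in\cS(\widetilde\Delta,\widetilde\Delta_*)$ with $S=\Red_\Sigma[\cE]$, and $G(z)=\Sigma_{12}(z)(I-\cE(z)\Sigma_{22}(z))^{-1}$, for which $M_G\colon\cH(K_\cE)\to\cM_{S,B}$ is unitary; the identity $S=\Sigma_{11}+\Sigma_{12}(I-\cE\Sigma_{22})^{-1}\cE\Sigma_{21}$ is \eqref{3.9}. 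Taking $\cW=\widetilde\Delta$, $\cV=\widetilde\Delta_*$ and substituting $\cM_{S,B}=G\cdot\cH(K_\cE)$ yields $\cM=\Gamma B^{-1}G\cdot\cH(K_\cE)$, with $G$ and $\Sigma$ produced explicitly from $(S,B)$ as asserted. For the ``if'' direction: from the hypotheses $G\cdot\cH(K_\cE)=\cM_{S,B}$, hence $\cM=\Gamma B^{-1}\cdot\cM_{S,B}$ for the admissible triple $(S,B,\Gamma)$, and Theorem~\ref{T:Toeplitzkernel} then shows that $\cM$ is a Toeplitz kernel of the required type.

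It remains to bound $\dim\cW$ and $\dim\cV$. Since $\bU$ is unitary from $\cX_0\oplus\cU\oplus\widetilde\Delta_*$ onto $\cX_0\oplus\cY\oplus\widetilde\Delta$ and here $\cU=\cY$, one has $\dim\widetilde\Delta=\dim\widetilde\Delta_*$, so only one of these needs an estimate. By Proposition~\ref{P:ContrMult} -- applicable because $P=M_{F^S}^{[*]}M_{F^S}$ by Lemma~\ref{L:5.2} -- the operator $\sbm{M_G & M_\Gamma}\colon\sbm{\cH(K_\cE)\\\cX_0}\to\cH(K_S)$ is coisometric, $M_\Gamma$ isometric and $M_G$ unitary onto $\cM_{S,B}$, so $\cH(K_S)=\cM_{S,B}\oplus M_\Gamma\cX_0$ orthogonally and $\dim\cH(K_\cE)=\dim\cM_{S,B}=\dim\cH(K_S)-\dim\cX_0\le\dim(H^2_\cY\ominus S\cdot H^2_\cY)$. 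One then passes to a minimal realization of the pair $\bigl(\cH(K_\cE),\,G|_{\cH(K_\cE)}\bigr)$ -- using the $*$-outer factorization $\Sigma_{12}=B\widehat\Sigma_{12}$ of \eqref{Si12} to retain the multiplier structure of $G$ -- which brings the auxiliary spaces down to $\dim\cW=\dim\cV\le\dim\cH(K_\cE)$ when $\cM_{S,B}\ne\{0\}$; the extra $+1$ is needed only to present $\cE$ on a nonzero space in the degenerate case $\cM_{S,B}=\{0\}$, $\cH(K_\cE)=\{0\}$.

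The step I expect to be the main obstacle is exactly this dimension count -- showing that the defect spaces $\widetilde\Delta,\widetilde\Delta_*$ of the Theorem~\ref{T:homoint} construction can be replaced by ones of dimension at most $\dim\cH(K_S)+1$, as opposed to the crude bound $\dim\cY$ that is immediate from the injectivity of $(x_0,u)\mapsto u$ on $\Delta\subseteq\sbm{\cX_0\\\cU}$. Everything else is bookkeeping around Theorems~\ref{T:Toeplitzkernel} and~\ref{T:homoint} and the formulas \eqref{3.9}, \eqref{GGa}.
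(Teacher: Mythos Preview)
Your main argument is exactly the paper's: the proof there is the single sentence ``Simply plug in the representation of a space $\cM_{S,B}$ in Theorem~\ref{T:homoint} into the parametrization of $\operatorname{Ker} T_{\Phi}$ in Theorem~\ref{T:Toeplitzkernel},'' and your two paragraphs on the ``only if'' and ``if'' directions carry this out correctly and in more detail than the paper.

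Where you go beyond the paper is the dimension count, and here your argument has real gaps. First, the claim that $\dim\widetilde\Delta=\dim\widetilde\Delta_*$ because $\bU$ is unitary and $\cU=\cY$ is not justified: unitarity of $\bU$ only gives equality of the total dimensions of domain and codomain, and since $\cX_0=\overline{\operatorname{Ran}}\,P^{1/2}$ is typically infinite-dimensional (whenever $\cK_B$ is), you cannot cancel it. The defect spaces of an isometry $V$ can have different dimensions in general. Second, the step where you ``pass to a minimal realization of the pair $(\cH(K_\cE),\,G|_{\cH(K_\cE)})$'' to force $\dim\cW,\dim\cV\le\dim\cH(K_\cE)$ is hand-waving: knowing $\dim\cH(K_\cE)$ is finite does not by itself let you replace $\widetilde\Delta_*$ (the target space of the $\cH(K_\cE)$-functions) by something of dimension $\le\dim\cH(K_\cE)$, and you have not explained what realization you have in mind or why it preserves the identities for $G$ and $S$ in the corollary. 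To be fair, the paper's one-line proof does not address the dimension bound at all, so you are right to flag it as the genuine obstacle; but the argument you sketch does not close it.
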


        \begin{proof}  Simply plug in the representation of a space
	    $\cM_{S,B}$ in Theorem \ref{T:homoint} into the
	    parametrization of
	    $\operatorname{Ker} T_{\Phi}$ in Theorem
	    \ref{T:Toeplitzkernel}.
       \end{proof}

       \begin{rem} \label{R:nearlyinv} {\em
           A subspace $\cM$ of $H^{2}$ is said to be {\em nearly
invariant}
           for the backward shift $M_{z}^{*}$ if
           $f(z)/z \in \cM$ whenever $f \in \cM$ and $f(0) = 0$.
           In \cite{Hitt}, Hitt obtained the following characterization of
           almost invariant subspaces:  {\em a subspace $\cM \subset
H^{2}$
           is nearly invariant if and only if there is an inner
	   function $u$ with $u(0)  = 0$
    and a holomorphic function $g$ on the disk ${\mathbb D}$
	   so that
           \begin{equation}  \label{nearinvM}
           \cM = M_g \cdot (H^{2} \ominus M_u H^{2})
           \end{equation}
           where $g$ is such that
           the multiplication operator $M_{g} \colon h(z) \mapsto g(z)h(z)$
           acts isometrically   from $H^{2} \ominus M_u H^{2}$ into
$H^{2}$}.
            Theorem 0.3 from \cite{FtHK09}  characterizes
           which functions $g$ are such that $M_{g}$ acts contractively
from
           $H^2\ominus M_u H^{2}$ into $H^{2}$ for a given inner function
           $u$ with $u(0)=0$:  such a $g$ must have the form
           \begin{equation} \label{g-rep}
               g(z) = a_{1}(z) (1 - u(z) b_{1}(z))^{-1}
          \end{equation}
          for a function $\sigma(z) = \left[\begin{smallmatrix}a_1(z)
          \\b_1(z)\end{smallmatrix} \right]$
          in the Schur class $\cS({\mathbb C}, {\mathbb C}^{2})$.  It is
not hard to see
          that  $M_{g} \colon H^{2} \ominus M_u H^{2} \to H^{2}$ is
isometric exactly when in addition
          $$
           |a_{1}(\zeta)|^{2} + |b_1(\zeta)|^{2} = 1 \quad\text{for almost
all}\quad
           \zeta \in {\mathbb T}
          $$
          from which it follows also that $\|g\|_{2} = 1$.

          If one starts
          with $g \in H^{2}$ of unit norm for which $M_{g} \colon H^{2}
          \ominus M_u H^{2}\to H^2$ is isometric, one can construct the
          representation \eqref{g-rep} for $g$ as follows.  Let $g$ have
          inner-outer factorization $g = \omega \cdot f$ with
	  $\omega$ inner
          and $f$ outer with $f(0)>0$. Let $F$ denote the Herglotz
integral
          of $|f|^{2}$, i.e., for $z
           \in {\mathbb D}$ we set
           $$
            F(z) = \int_{{\mathbb T}} \frac{ \zeta + z}{ \zeta - z}
            |f(\zeta)|^{2}\,  \frac{ |d\zeta|}{2 \pi}.
           $$
           The fact that $\|g\|_{2}^{2} = \| f\|^{2}_{2} = 1$ implies that
            $F(0) = 1$; we also note that $F(z)$ has positive real part
for $z$ in
           ${\mathbb D}$.  If we then set $b = \frac{F-1}{F+1}$, then $b$
is
           in the unit ball of $H^{\infty}$ and satisfies $b(0) = 0$.
           The fact that $M_{g}$ is isometric from $H^{2} \ominus M_u
H^{2}$
           into $H^{2}$ forces $b$ to be divisible by $u$, so we can
factor
           $b$ as $b = u b_{1}$ with $b_{1}$ in the unit ball of
           $H^{\infty}$. Let $a$ be the unique outer function with
           $|a(\zeta)|^{2} = 1 - |b(\zeta)|^{2}$ for almost all $\zeta \in
           {\mathbb T}$ and with $a(0)>0$.  Set $a_{1}(z) = \omega(z)
a(z)$.
          Then $g$ has the representation \eqref{g-rep} with this choice
of
          $a_{1}$ and $b_{1}$.  The characterization of isometric
          multipliers from $H^{2} \ominus M_u H^{2}$ into $H^{2}$ in this
          form together with the application to Hitt's theorem is one of
the
          main results of Sarason's paper \cite{SarasonOT35}.
          A direct proof for the special case where $u(z) = z$ appears in
          \cite[Lemma 2 page 488]{SarasonOT41} in connection with a
          different problem, namely, the characterization of Nehari pairs.

          Following the terminology of \cite{Kheifets-gamma}, we say that
          pair of $H^{\infty}$
           functions $(a,b)$ is a {\em $\gamma$-generating pair} if
           \begin{enumerate}
               \item[(i)] $a$ and $b$ are functions in the unit ball of
               $H^{\infty}$,
               \item[(ii)] $a$ is outer and $a(0) > 0$,
               \item [(iii)] $b(0) = 0$, and
               \item[(iv)] $|a|^{2} + |b|^{2} =1$ almost everywhere on the
               unit circle ${\mathbb T}$.
         \end{enumerate}
         Note that the pair of functions $(a,b)$ appearing above in the
          representation \eqref{g-rep} (with $a_{1} = \omega a$ and $b = u
         b_{1}$) is $\gamma$-generating.

         It is not hard to see that the kernel of any bounded Toeplitz
         operator $\operatorname{Ker} T_{\phi} \subset H^{2}$ with $\phi
\in
         L^{\infty}$ is always nearly invariant; hence any Toeplitz kernel
         $\cM = \operatorname{Ker} T_{\phi}$ is in particular
         of the form \eqref{nearinvM} as described above.  The result of
         Hayashi in \cite{Hay90} is the following characterization of
which
         nearly invariant subspaces are Toeplitz kernels: {\em the
subspace
         $\cM \subset H^{2}$ is the kernel of some bounded Toeplitz
operator
         $T_{\phi}$ if and only if $\cM$ has the form \eqref{nearinvM}
with
         $\omega(z) = 1$ for some $\gamma$-generating pair and inner
         function $u$ with $u(0) = 0$ subject to the additional condition
         that the function $\left( \frac{a}{1 - z \overline{u}} b
         \right)^{2}$ is an exposed point of the unit ball of $H^{1}$.}
	 These results have now been extended to the matrix-valued
	 case in \cite{CCP} and \cite{Chevrot}.

         The paper \cite{dy} of Dyakonov obtains the alternative characterization of
         Toeplitz kernels given in Theorem \ref{T:Toeplitzkernel} for
	 the scalar case; our proof is a simple adaptation of the
	 proof in \cite{dy} to the matrix-valued case, with the
	 matrix-valued factorization result from \cite{Barclay}
	 (Theorem \ref{T:Barclay} above) replacing the special
	 scalar-valued version of the result due to Bourgain
	 \cite{Bourgain}.
	 The advantage of this characterization of Toeplitz kernels
	 (as opposed to  the earlier results of Hayashi \cite{Hay86}
	 for the scalar case and of Chevrot \cite{Chevrot} for the
	 matrix-valued case)
	 is the avoidance of mention of $H^{1}$-exposed points
         (as there is no useable characterization of such objects).
         Moreover Dyakonov formulates his results for
          subspaces of $H^{p}$  rather than just $H^{2}$; we expect
	  that our Theorem  \ref{T:Toeplitzkernel} extends in the
	  same way to the $H^{p}$ setting, but we do not pursue this
	  generalization here as Theorem \ref{T:homoint} is at
	  present formulated only for the $H^{2}$ setting.
         Note that our characterization of Toeplitz kernels
	 (Corollary \ref{C:Toeplitzkernel} above)
         brings us back to the formulations of Hayashi and Sarason for
         characterizations of nearly invariant subspaces/Toeplitz kernels
         in two respects:
         (1) the characterization involves a multiplication operator
         which is unitary from some model space of functions to the space
         to be characterized, and (2) there is an explicit
	 parametrization of
         which such multipliers have this unitary property.
         }\end{rem}

\section{Boundary interpolation}\label{S:BI}
    \setcounter{equation}{0}

    In this section we consider a boundary interpolation problem in
    a de Branges-Rovnyak space $\cH(K_S)$. For the sake of simplicity we
    focus on the scalar-valued case; it is a routine exercise to
    extend the results presented here to the matrix- or
    operator-valued case by using the notation and machinery from
    \cite{bk3, bk6, bk5}. In what follows,
    $f_j(z)=\frac{f^{(j)}(z)}{j!}$ stands for the $j$-th Taylor
    coefficient at
    $z\in\D$ of an analytic function $f$. By $f_j(t_0)$ we denote the
    boundary limit
    \begin{equation}   \label{bdrylim}
    f_j(t_0):=\lim_{z\to t_0}f_j(z)
    \end{equation}
    as $z$ tends to a boundary point $t_0\in\T$ nontangentially, provided
    the limit exists and is finite.

    The next theorem collects from the existing literature several equivalent
    characterizations of the higher order Carath\'eodory-Julia condition for a
    Schur-class function $s\in\cS$
    \begin{equation}
    \liminf_{z\to t_0}\frac{\partial^{2n}}{\partial
    z^{n}\partial\bar{z}^{n}} \, \frac{1-|s(z)|^2}{1-|z|^2}<\infty,
    \label{6.2}
    \end{equation}
    where now $z$ tends to $t_0\in\T$ unrestrictedly in $\D$.

    %%%%%%%%%%%%%%%e %%%%%%%%%%%%%%%%%%%%%%%%%%%%%%%%%
    \begin{thm}\label{T:6.1}
    Let $s\in\cS$, $t_0\in\T$ and $n\in{\mathbb N}$. The following are
    equivalent:
    \begin{enumerate}

    \item $s$ meets the Carath\'eodory-Julia condition \eqref{6.2}.

    \item The function
    $\frac{\partial^n}{\partial\bar{\zeta}^n}K_S(\cdot,\zeta)$ stays
bounded in
    the norm of $\cH(K_S)$ as $\zeta$ tends radially to $t_0$.

    \item It holds that
    \begin{equation} \label{6.3}
     {\displaystyle\sum_k\frac{1-|a_k|^2}{|t_0-a_k|^{2n+2}}+
    \int_0^{2\pi}\frac{d\mu(\theta)}{|t_0-e^{i\theta}|^{2n+2}}<\infty}
    \end{equation}
    where the numbers $a_k$ come from the Blaschke product of the inner-outer factorization of $s$:
    $$
    s(z)=\prod_k\frac{\bar{a}_k}{a_k}\cdot\frac{z-a_k}{1-z\bar{a}_k}\cdot
    \exp\left\{-\int_0^{2\pi}\frac{e^{i\theta}+z}{e^{i\theta}-z}d
    \mu(\theta)\right\}.
   $$
\item The boundary limits $s_j:=s_j(t_0)$ exist for $j=0,\ldots,n$ and the
    functions
    \begin{equation}
    K_{t_0,j}(z):=\frac{z^j}{(1-z\overline{t}_0)^{j+1}}-s(z)\cdot
\sum_{\ell=0}^j\frac{z^{j-\ell}\overline{s}_j}{(1-z\overline{t}_0)^{j+1-\ell}}\quad (j=0,\ldots,n)
    \label{6.4}
    \end{equation}
    belong to $\cH(K_s)$.

    \item The boundary limits $s_j:=s_j(t_0)$ exist for $j=0,\ldots,n$ and
the
    function $K_{t_0,n}(z)$ defined via formula \eqref{6.4} belongs to
    $\cH(K_s)$.

    \item The boundary limits $s_j:=s_j(t_0)$ exist for $j=0,\ldots,2n+1$
and
    are such that $|s_0|=1$ and the matrix
    \begin{equation}
    {\mathbb P}^s_{n}(t_0):=\left[\begin{array}{ccc} s_1 & \cdots &
    s_{n+1} \\ \vdots & &\vdots \\
    s_{n+1} & \cdots & s_{2n+1}\end{array}\right]{\bf \Psi}_n(t_0)
    \left[\begin{array}{ccc}\overline{s}_0 & \ldots &
    \overline{s}_n\\ & \ddots & \vdots \\ 0 &&\overline{s}_0
    \end{array}\right]
    \label{6.5}
    \end{equation}
    is Hermitian, where the first factor is a Hankel matrix, the
    third factor is an upper triangular Toeplitz matrix and where
    ${\bf \Psi}_n(t_0)$ is the upper triangular matrix given by
    \begin{equation}
    {\bf\Psi}_n(t_0)=\left[\Psi_{j\ell}\right]_{j,\ell=0}^n,\quad
    \Psi_{j\ell}=(-1)^{\ell}\left(\begin{array}{c} \ell \\ j
    \end{array}\right)t_0^{\ell+j+1},\quad 0\le j\leq\ell\le n.
    \label{6.6}
    \end{equation}

    \item For every $f\in\cH(K_s)$, the boundary limits
    $f_j(t_0)$ exist for $j=0,\ldots,n$.
    \end{enumerate}
    Moreover, if one of the conditions (1)--(7) is satisfied, and hence all, then:
    \begin{enumerate}
    \item[(a)] The matrix \eqref{6.5} is positive semidefinite and
    equals
    \begin{equation}
    {\mathbb P}^s_{n}(t_0)=\left[\langle K_{t_0,i}, \,
    K_{t_0,j}\rangle_{\cH(K_s)}\right]_{i,j=0}^n.
    \label{6.7}
    \end{equation}
    \item[(b)] The functions \eqref{6.4} are boundary reproducing kernels
    in $\cH(K_s)$ in the sense that
    \begin{equation}
    \langle f, \, K_{t_0,j}\rangle_{\cH(K_s)}=f_j(t_0):=\lim_{z\to
    t_0}\frac{f^{(j)}(z)}{j!}\quad\mbox{for}\quad j=0,\ldots,n.
    \label{6.8}
    \end{equation}
    \end{enumerate}
    \end{thm}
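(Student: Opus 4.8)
The plan is to assemble Theorem~\ref{T:6.1} from the operator-theoretic study of boundary point evaluation in de Branges--Rovnyak spaces carried out in \cite{bk3, bk5, bk6} (and \cite{fm}): almost nothing new has to be proved, so the task is to organize the pieces into one circle of implications and supply the short links between them. I would first isolate the core block consisting of the equivalence of (1), (2), (4) and (5) together with the two concluding assertions (a) and (b). In the framework of those papers, condition (2) says precisely that the net $\{\partial_{\bar\zeta}^{\,n}K_s(\cdot,\zeta)\}$ stays bounded in $\cH(K_s)$ as $\zeta\to t_0$ radially, so it has a weak cluster point; the computation that identifies this cluster point with the explicit function $K_{t_0,n}$ of \eqref{6.4} is exactly what is done there, and it yields simultaneously the reproducing formula \eqref{6.8} and the Gram-matrix identity \eqref{6.7}. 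The implication (4)$\Rightarrow$(5) is trivial, while (5)$\Rightarrow$(4) follows by Taylor-expanding $K_s(\cdot,\zeta)$ about $t_0$ and differentiating fewer than $n$ times, which exhibits each $K_{t_0,j}$ with $j<n$ as a radial limit of the same type as $K_{t_0,n}$, so that the already-established equivalence of (2) and (5) forces all of them into $\cH(K_s)$.

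Next I would bring in the two ``classical-looking'' criteria. The equivalence (1)$\Leftrightarrow$(3) is the description of the order-$n$ Carath\'eodory--Julia condition in terms of the canonical Blaschke--singular--outer factorization of $s$; differentiating the Blaschke and Herglotz factors $2n$ times reduces finiteness of the $\liminf$ in \eqref{6.2} to convergence of the two sums in \eqref{6.3}, and I would cite this as classical (it is also reproved in \cite{bk5, bk6}). The equivalence (1)$\Leftrightarrow$(6) is the boundary Schwarz--Pick-matrix criterion: $s$ meets \eqref{6.2} of order $n$ exactly when the limits $s_0,\dots,s_{2n+1}$ exist with $|s_0|=1$ and the structured matrix ${\mathbb P}^s_n(t_0)$ of \eqref{6.5}--\eqref{6.6} is Hermitian, equivalently positive semidefinite; this is the boundary interpolation theorem of \cite{bk3}. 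The one genuinely computational point, which I expect to be the main obstacle, is to match the explicit Hankel-times-${\bf\Psi}_n(t_0)$-times-Toeplitz factorization of \eqref{6.5} against the inner products $\inn{K_{t_0,i}}{K_{t_0,j}}_{\cH(K_s)}$; this identification is what simultaneously produces assertion (a) and closes the loop back to the core block, and rather than recompute it I would invoke the corresponding lemmas of \cite{bk3, bk6}.

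Finally I would attach condition (7). Granting the core block, formula \eqref{6.8} exhibits $f_j(t_0)=\inn{f}{K_{t_0,j}}_{\cH(K_s)}$ as a bounded linear functional of $f$, so the nontangential limits $f_j(t_0)$ exist for every $f\in\cH(K_s)$; this is (7). For the converse, (7) says that $f\mapsto f_n(t_0)$ is everywhere defined on $\cH(K_s)$, and a closed-graph argument shows it is bounded: norm convergence in $\cH(K_s)$ forces locally uniform convergence of all derivatives on $\D$, which, combined with the hypothesis that the boundary value exists, identifies the graph limit, so the functional is given by $\inn{f}{g}_{\cH(K_s)}$ for some $g\in\cH(K_s)$; testing against the kernels $K_s(\cdot,\zeta)$ then pins down $g=K_{t_0,n}$, which is (5). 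This closes the cycle and, together with the core block, yields all the stated equivalences as well as the ``moreover'' part.
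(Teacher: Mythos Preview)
Your overall plan---to assemble the theorem by citing \cite{bk3, bk5, bk6, fm} and supplying the few short links---matches the paper's own proof in spirit; the paper also just collects citations, attributing (1)$\Leftrightarrow$(4)$\Leftrightarrow$(5), (5)$\Rightarrow$(6), (a), (b) to \cite{bk3}, (6)$\Rightarrow$(1) to \cite{bk4}, (1)$\Leftrightarrow$(7) to \cite{acl, fm, bk5}, and (2)$\Leftrightarrow$(7) to \cite{sarasonsubh}. Two differences are worth noting. First, the paper does not route (2) through the ``core block'' but instead links (2) directly to (7) via Sarason; second, the paper simply cites the literature for (7)$\Rightarrow$(1), whereas you attempt to supply an argument.

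That argument, however, has a genuine gap. Your closed-graph step requires that if $f_k\to f$ in $\cH(K_s)$ and $(f_k)_n(t_0)\to c$, then $c=f_n(t_0)$. You justify this by saying norm convergence forces locally uniform convergence of all derivatives on $\D$; that is true, but it gives you nothing at the boundary point $t_0\in\T$. Nontangential boundary limits are not continuous under locally uniform convergence on $\D$, so the graph need not be closed on those grounds. The standard, and correct, device here is not the closed graph theorem but the uniform boundedness principle: for $0<r<1$ the interior functionals $\Lambda_r\colon f\mapsto f_n(rt_0)$ are bounded on $\cH(K_s)$, and hypothesis~(7) says $\Lambda_r f$ converges for every $f$ as $r\uparrow 1$; Banach--Steinhaus then gives $\sup_r\|\Lambda_r\|<\infty$, which is exactly condition~(2) (boundedness of $\partial_{\bar\zeta}^{\,n}K_s(\cdot,\zeta)$ along the radius). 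This is precisely the (2)$\Leftrightarrow$(7) link the paper cites from \cite{sarasonsubh}, and it closes the circle cleanly. Replace your closed-graph paragraph by this Banach--Steinhaus argument and the proposal is fine.
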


    \begin{proof}
    Equivalences (1)$\Longleftrightarrow$(4)$\Longleftrightarrow$(5), implication
    (5)$\Longrightarrow$(6) and statements (a) and (b)
    were proved in \cite{bk3}; implication (6)$\Longrightarrow$(1)
    and equivalence (1)$\Longrightarrow$(7) appear in \cite{bk4} and
    \cite{bk5}, respectively. Equivalence (1)$\Longleftrightarrow$(7)
    was established in \cite{acl} for $s$ inner and extended in \cite{fm}
    to general Schur class functions. Equivalence (2)$\Longleftrightarrow$(7)
    was shown in \cite[Section VII]{sarasonsubh}.
    \end{proof}

    Theorem \ref{T:6.1} suggests a
    boundary interpolation problem for functions in ${\cH(K_s)}$ with data set
    \begin{equation}
    {\fD}_b=\{s, {\bf t}, {\bf k}, \{f_{ij}\}\},
    \label{6.9}
    \end{equation}
    consisting of two tuples ${\bf t}=(t_1,\ldots,t_k)\in\T^k$ and ${\bf
    n}=(n_1,\ldots,n_k)\in\N^k$, a doubly indexed sequence $\{f_{ij}\}$
    (with $0\le j\le n_i$ and $1\le i\le k$) of complex numbers and of
    a Schur-class function $s$ subject to the Carath\'eodory-Julia conditions
    \begin{equation}
    \liminf_{z\to t_i}\frac{\partial^{2n_i}}{\partial
    z^{n_i}\partial\bar{z}^{n_i}} \, \frac{1-|s(z)|^2}{1-|z|^2}<\infty
    \quad\mbox{for}\quad i=1,\ldots,k,
    \label{6.10}
    \end{equation}
    or one of the equivalent conditions from Theorem \ref{T:6.1}.

    We consider the problem
    ${\bf BP}_{\cH(K_s)}$: {\em Given a date set \eqref{6.9} satisfying \eqref{6.10}, find all
    $f\in\cH(K_s)$
    such that $\|f\|_{\cH(K_s)}\le 1$ and
    \begin{equation}
    f_j(t_i):=\lim_{z\to
    t_i}\frac{f^{(j)}(z)}{j!}=f_{ij}\quad\mbox{for}\quad
    j=0,\ldots,n_i\quad\mbox{and}\quad i=1,\ldots,k.
    \label{6.11}
    \end{equation}}

    According to Theorem \ref{T:6.1}, conditions \eqref{6.10} guarantee that
    all the boundary limits in \eqref{6.11} exist as well as the boundary
    limits
    \begin{equation}
    s_{ij}:=s_j(t_i)\quad\mbox{for}\quad
    j=0,\ldots,2n_i+1\quad\mbox{and}\quad i=1,\ldots,k.
    \label{6.12}
    \end{equation}
    We let $N=\sum_{i=1}^k(n_i+1)$ denote
    the total number of interpolation conditions in \eqref{6.11} and we
    let $\cX=\C^N$. With the data set \eqref{6.9}, we associate the matrices
    \begin{equation}
    T=\left[\begin{array}{ccc}T_1 & & 0 \\ & \ddots & \\
    0 && T_k\end{array}\right]\quad\mbox{and}\quad
    \left[\begin{array}{c}E \\ N \\ \bx^*\end{array}\right]=
    \left[\begin{array}{cccc}E_1 & E_2 &\ldots &
    E_k\\ N_1 & N_2&\ldots & N_k\\ \bx_1^* & \bx_2^*&\ldots
    &\bx_k^*\end{array}\right],
    \label{6.13}
    \end{equation}
    where
    \begin{equation}
    T_i=\left[\begin{array}{cccc} \bar{t}_i & 1 & \ldots & 0 \\
    0 &  \bar{t}_i & & \vdots \\ \vdots & \ddots & \ddots& 1\\
    0 & \ldots & 0 & \bar{t}_i\end{array}\right]\quad\mbox{and}\quad
    \left[\begin{array}{c}E_i \\ N_i \\ \bx_i^*\end{array}\right]=
    \left[\begin{array}{cccc} 1& 0& \ldots&0 \\ \overline{s}_{i,0}&
    \overline{s}_{i,1}& \ldots & \overline{s}_{i,n_i}\\
    \overline{f}_{i,0}&
    \overline{f}_{i,1}& \ldots & \overline{f}_{i,n_i}
    \end{array}\right].
    \label{6.14}
    \end{equation}
    Now we define the function $\Fs$ by formula \eqref{1.17} with
    $E$, $T$ and $N$ given by \eqref{6.13} and \eqref{6.14}, and show that
    $\Fs$ can be expressed in terms of boundary kernels as
    \begin{equation}
    \Fs(z):=(E-s(z)N)(I-zT)^{-1}=\begin{bmatrix}{\bf K}_{t_1,n_1}(z)
    &\ldots & {\bf K}_{t_k,n_k}(z) \end{bmatrix}
    \label{6.15}
    \end{equation}
    where
    \begin{equation}
    {\bf K}_{t_i,n_i}(z)=\begin{bmatrix}
    K_{t_i,0}(z) & K_{t_i,1}(z) & \ldots & K_{t_i,n_i}(z)\end{bmatrix}
    \quad\mbox{for}\quad i=1,\ldots,k,
    \label{6.16}
    \end{equation}
    and where the functions $K_{t_i,j}$ are the boundary kernels
    defined via formula \eqref{6.4}. Indeed, it follows from definitions
    \eqref{6.14} that
    \begin{equation}
    \left[\begin{array}{c} E_{i} \\ N_i
    \end{array}\right]\left(I-zT_i\right)^{-1}=
    \left[\begin{array}{ccc}{\displaystyle\frac{1}{1-z\bar{t}_i}} &
    \ldots & {\displaystyle\frac{z^{n_i}}{(1-z\bar{t}_i)^{n_i+1}}}\\
    {\displaystyle\frac{\overline{s}_{i,0}}{1-z\bar{t}_i}} & \ldots &

    {\displaystyle\sum_{\ell=0}^{n_i}\frac{\overline{s}_{i,\ell}z^{n_i-\ell}}
    {(1-z\bar{t}_i)^{n_i+1-\ell}}}\end{array}\right].
    \label{6.17}
    \end{equation}
    Multiplying the latter equality by $\begin{bmatrix} 1 &
    -s(z)\end{bmatrix}$ on the left and taking into account \eqref{6.16}
    and explicit formulas \eqref{6.4} for $K_{t_i,j}$ we obtain
    \begin{equation}
    (E_i-s(z)N_i)\left(I-zT_i\right)^{-1}={\bf K}_{t_i,n_i}(z),
    \label{6.18}
    \end{equation}
    and equality \eqref{6.15} now follows from the block structure
    \eqref{6.13} of $T$, $E$ and $N$.

    Now we will show that the problem {\bf AIP}$_{\cH(K_S)}$
    with the  $\{s, T, E, N, \bx\}$ taken in the form \eqref{6.13},
    \eqref{6.14} is equivalent to the problem {\bf BP}$_{\cH(K_s)}$. We first
    check that the data is {\bf AIP}-admissible.

    The first requirement is self-evident
    since all the eigenvalues of $T$ fall onto the unit circle and therefore
    $(I-zT)^{-1}$ is a rational functions with no poles inside $\D$.
    However, it is worth noting that the pair $(E,T)$ is not output-stable
    and so {\bf BP}$_{\cH(K_s)}$ cannot be embedded into the scheme of the
    problem ${\bf OAP}_{\cH(K_s)}$ of Section \ref{S:OAP}.
    To verify that the requirements (2) concerning $\Fs$ are also fulfilled,
    we first observe from \eqref{6.15} and \eqref{6.16} that
    for a generic vector
    $x=\operatorname{Col}_{1\le i\le k}\operatorname{Col}_{0\le j\le n_i}
    x_{ij}$ in $\cX$,
    \begin{equation}
    \Fs(z)x=\sum_{i=1}^k\sum_{j=0}^{n_i}K_{t_i,j}(z)x_{ij}.
    \label{6.18a}
    \end{equation}
    Now it follows from statement (3) in Theorem \ref{T:6.1} that
    the operator $M_{F^s}$ maps $\cX$ into $\cH(K_s)$.

    Furthermore, due to \eqref{6.18a}, the operator
    $P=M_{F^s}^{[*]}M_{F^s}$ admits the following block
matrix
    representation with respect to the standard basis of $\cX=\C^N$:
    \begin{equation}
    P=\left[P_{ij}\right]_{i,j=1}^k\quad\mbox{where}\quad
    P_{ij}=\left[\langle K_{t_i,\ell}, \,
    K_{t_j,r}\rangle_{\cH(K_s)}\right]_{\ell=0,\ldots,n_i}^{r=0,\ldots,n_j}
    \label{6.21}
    \end{equation}
    and the explicit formulas for $P_{ij}$ in  terms of boundary limits
    \eqref{6.12} are (see \cite{bk6} for details):
    \begin{equation}
    P_{ij} =H_{ij}\cdot {\bf
    \Psi}_{n_j}(t_j)\cdot\left[\begin{array}{ccc}\overline{s}_{j,0} &
\ldots &
    \overline{s}_{j,n_j}\\ & \ddots & \vdots \\ 0 &&\overline{s}_{j,0}
    \end{array}\right]
    \label{6.24}
    \end{equation}
    where ${\bf \Psi}_{n_j}(t_j)$ is defined via formula \eqref{6.6},
where
    $H_{ii}=\left[s_{i,\ell+r+1}\right]_{\ell,r=1}^{n_i}$ is a Hankel
    matrix and where the matrices
    $H_{ij}$ (for $i\neq j$)
    are defined entry-wise by
    \begin{eqnarray}
    \left[H_{ij}\right]_{r, m}&=&
    \sum_{\ell=0}^{r} (-1)^{r-\ell}
    \left(\begin{array}{c}m+r-\ell \\
    m\end{array}\right)\frac{s_{i,\ell}}
    {(t_i-t_j)^{m+r-\ell+1}}\nonumber \\
    &&-\sum_{\ell=0}^{m} (-1)^{r}\left(\begin{array}{c}m+r-\ell \\
    r\end{array}\right)\frac{s_{j,\ell}}{(t_i-t_j)^{m+r-\ell+1}}
    \label{6.25}
    \end{eqnarray}
    for $r=0,\ldots,n_i$ and $m=0,\ldots,n_j$. It was shown in
    \cite{bk6} that the matrix $P$ of the above structure satisfies
    the Stein identity \eqref{1.11}, with $T$, $E$ and $N$ given by
    \eqref{6.13}, \eqref{6.14}, whenever $P$ is Hermitian. This works
since
    in the present situation, $P$ is positive semidefinite. Thus, the data set
    $\{s, T, E, N, \bx\}$ is {\bf AIP}-admissible.

    By the reproducing property \eqref{6.8}, representation \eqref{6.18}
    implies that for every $f\in\cH(K_s)$,
    $$
    \langle M_{F^s}^{[*]}f, \; x\rangle_{\cX}=
    \langle f, \;
    M_{F^s}
x\rangle_{\cH(K_s)}=\sum_{i=1}^k\sum_{j=0}^{n_i}f_j(t_i)\overline{x}_{ij}.
    $$
    On the other hand, for $\bx$ defined in \eqref{6.15} and \eqref{6.16},
    $$
    \langle \bx, \,
    x\rangle_{\cX}=\sum_{i=1}^k\sum_{j=0}^{n_i}f_{i,j}\overline{x}_{ij}.
    $$
    It follows from the two last equalities that interpolation conditions
    \eqref{6.11} are equivalent to the equality
    $$
    \langle M_{F^s}^{[*]}f, \; x\rangle_{\cX}=\langle \bx, \,
    x\rangle_{\cX}
    $$
    holding for every $x\in\cX$, i.e., the equality
    $M_{F^s}^{[*]}f=\bx$ holds. We now conclude that the problem {\bf
    AIP}$_{\cH(K_S)}$
    with the data set $\{s, T, E, N, \bx\}$ taken in the form \eqref{6.13},
    \eqref{6.14} is equivalent to the {\bf BP}$_{\cH(K_s)}$. Thus,
    {\em the problem {\bf BP}$_{\cH(K_s)}$ has a solution if and only if $P\ge {\bf
    x}{\bf x}^*$ where $P$ is defined in terms of boundary limits
\eqref{6.12}
    for $s$ as in \eqref{6.24} and where ${\bf x}$ is defined in
\eqref{6.13},
    \eqref{6.14}}. If this is the case, {\em the solution set for the problem
    {\bf BP}$_{\cH(K_s)}$ is parametrized as in Theorem
\ref{T:AIPsol}}.

\bibliographystyle{amsplain}
\providecommand{\bysame}{\leavevmode\hbox to3em{\hrulefill}\thinspace}

\end{document}